\theoremstyle{plain}
\newtheorem{thm}{Theorem}
\newtheorem{prop}[thm]{Proposition}
\newtheorem{cor}[thm]{Corollary}
\newtheorem{lemma}[thm]{Lemma}
\newtheorem{claim}[thm]{Claim}
\theoremstyle{remark}
\newtheorem*{rem}{Remark}
\newtheorem*{rems}{Remarks}
\theoremstyle{definition}
\newtheorem{defn}{Definition}
\newcommand{\C}{{\mathbb C}}
\newcommand{\R}{{\mathbb R}}
\newcommand{\D}{{\mathbb D}}
\newcommand{\K}{\mathcal{K}}
\newcommand{\iC}{{\int_{\C}}}
\newcommand{\E}{{\mathbb E}}
\newcommand{\V}{{\mathbb V}}
\newcommand{\Pro}{{\mathbb P}}
\newcommand{\lil}{{\lambda\in\Lambda}}
\newcommand{\lpil}{{\lambda'\in\Lambda}}
\newcommand{\Fdosa}{{\mathcal F_{\phi}^2}}
\newcommand{\FL}{{\mathcal F_L^2}}
\newcommand\renorm[1]{\ensuremath{|\!|\!|#1|\!|\!|}}
\newcommand\inprod[2]{\ensuremath{\langle\!\langle#1,#2\rangle\!\rangle}}
\newcommand{\supp}{\operatorname{supp}}
\newcommand{\re}{\operatorname{Re}}
\newenvironment{alist}
{

\begin{enumerate}}
{\end{enumerate}}
\title{Inhomogenous random zero sets.}
\thanks{The authors are supported by the Generalitat de Catalunya (grant 2009 SGR
1303) and the Spanish Ministerio de Econom\'ia y Competividad (project MTM2011-27932-C02-01)}
\author{Jeremiah Buckley}\address{Dept.\ Matem\`atica Aplicada i An\`alisi, Universitat  de Barcelona, Gran Via 585, 08007 Bar\-ce\-lo\-na, Spain}\email{jerry.buckley@ub.edu}
\author{Xavier Massaneda}\email{xavier.massaneda@ub.edu}
\author{Joaquim Ortega-Cerd\`a}\email{jortega@ub.edu}
\begin{document}
\begin{abstract}
We construct random point processes in $\C$ that are asymptotically close to a given doubling measure. The processes we construct are the zero sets of
random entire functions that are constructed through generalised Fock spaces. We offer two alternative constructions, one via bases for these spaces and
another via frames, and we show that for both constructions the average distribution of the zero set is close to the given doubling measure, and that the
variance is much less than the variance of the corresponding Poisson point process. We prove some asymptotic large deviation estimates for these
processes, which in particular allow us to estimate the `hole probability', the probability that there are no zeroes in a given open bounded subset of the
plane. We also show that the `smooth linear statistics' are asymptotically normal, under an additional regularity hypothesis on the measure. These
generalise previous results by Sodin and Tsirelson for the Lebesgue measure.
\end{abstract}
\maketitle

\section{Introduction}

In this paper we are interested in random point processes that mimic a given $\sigma$-finite measure $\mu$ on the complex plane. The classical example is
the inhomogenous Poisson point process, which we consider in the following manner. Fix a parameter $L>0$ and let $N_L$ be the Poisson random measure on
$\C$ with intensity $L\mu$, that is,
\begin{itemize}
\item $N_L$ is a random measure on $\C$,
\item For every measurable $A\subset\C$, $N_L(A)$ is a Poisson random variable with mean $L\mu(A)$, and
\item If $A$ and $B$ are disjoint then $N_L(A)$ and $N_L(B)$ are independent.
\end{itemize}
Such an $N_L$ always exists, see for example \cite[Proposition 19.4]{Sat}. Suppose that $\psi\in L^1(\mu)\cap L^2(\mu)$ and define
\[
N(\psi,L)=\frac1L\iC\psi(z)dN_L(z).
\]
Then (see \cite[Proposition 19.5]{Sat})
\begin{equation}\label{poismean}
\E\left[N(\psi,L)\right]=\iC\psi d\mu
\end{equation}
and, writing $\V$ for the variance,
\begin{equation}\label{poisvar}
\V[N(\psi,L)]=\frac1L\iC|\psi|^2d\mu.
\end{equation}

In contrast to the Poisson point process, the zero sets of random analytic functions are known to be more `rigid' processes, in particular these processes
exhibit `local repulsion' (see \cite[Chapter 1]{HKPV}). We will construct a random zero set such that $\eqref{poismean}$ continues to hold (at least for
smooth $\psi$ in the limit $L\rightarrow\infty$, see Theorem~\ref{almostsurebasis}) but with a variance that decays faster than $L^{-2}$, in contrast to
$\eqref{poisvar}$ (Theorem~\ref{variance}). In fact we will also have
\[
N(\psi,L)\rightarrow\iC\psi d\mu\text{ as }L\rightarrow\infty
\]
almost surely, as well as being true in mean (Theorem~\ref{almostsurebasis}).

As a further measure of the `rigidity' of our process we note that the `hole probability' for the Poisson point process is, by definition,
\[
\Pro[N_L(A)=0]=e^{-L\mu(A)}
\]
for any $A\subset\C$ whereas we shall see that the `hole probability' for the zero sets we construct decays at least like $e^{-cL^2}$ for some $c>0$.

This problem has already been considered when $\mu$ is the Lebesgue measure on the plane (\cite{ST1} and \cite{ST3}) and the resultant zero-sets are
invariant in distribution under plane isometries. We are interested in generalising this construction to other measures, where we cannot expect any such
invariance to hold. We begin by recalling the following definition.
\begin{defn}\label{doubling}
A nonnegative Borel measure $\mu$ in $\C$ is called \textit{doubling} if  there exists $C>0$ such that
\[\mu(D(z,2r))\leq C \mu(D(z,r))\]
for all $z\in\C$ and $r>0$. We denote by $C_\mu$ the infimum of the constants $C$ for which the inequality holds, which is called the \textit{doubling
constant for $\mu$}.
\end{defn}
Let $\mu$ be a doubling measure and let $\phi$ be a subharmonic function with $\mu=\Delta\phi$. Canonical examples of such functions are given by
$\phi(z)=|z|^\alpha$ where $\alpha>0$ (the value $\alpha=2$ corresponds of course to the Lebesgue measure). The function $\phi(z)=(\re z)^2$ gives a
non-radial example, and more generally one can take $\phi$ to be any subharmonic, non-harmonic, (possibly non-radial) polynomial. We define, for
$z\in\mathbb{C}$, $\rho_\mu(z)$ to be the radius such that $\mu(D(z,\rho_\mu(z)))=1$. We shall normally ignore the dependence on $\mu$ and simply write
$\rho(z)$.

Consider the generalised Fock space
\[
\Fdosa =\{f\in H(\mathbb{C}):\|f\|_\Fdosa^2=\iC|f(z)|^2e^{-2\phi(z)}\frac{dm(z)}{\rho(z)^2}<+\infty\}
\]
where $m$ is the Lebesgue measure on the plane. We note that, as in \cite{Ch}, the measure $\frac{dm(z)}{\rho(z)^2}$ can be thought of as a regularisation
of the measure $d\mu(z)$. The classical Bargmann-Fock space corresponds to $\phi(z)=|z|^2$. Let $(e_n)_n$ be an orthonormal basis for the space $\Fdosa$
and $(a_n)_n$ be a sequence of independent standard complex Gaussian random variables (that is, the probability density of each $a_n$ is
$\frac{1}{\pi}\exp(-|z|^2)$ with respect to the Lebesgue measure on the plane; we denote this distribution $\mathcal{N}_\C(0,1)$). Consider the Gaussian
analytic function (GAF) defined by
\[
g(z)=\sum_n a_n e_n (z).
\]
This sum almost surely defines an entire function (see for example \cite[Lemma~2.2.3]{HKPV}). The covariance kernel associated to this function is given
by (note that $\E[g(z)]=0$ for all $z\in\C$)
\[
\K(z,w)=\E[g(z)\overline{g(w)}]=\sum_ne_n (z)\overline{e_n (w)}
\]
which is the reproducing kernel for the space $\Fdosa$. Moreover the distribution of the random analytic function $g$ is determined by the kernel $\K$ so
it does not matter which basis we chose.

We are interested in studying the zero set $\mathcal Z(g)$, and a first observation is that since $g(z)$ is a mean-zero, normal random variable with
variance $\K(z,z)\neq0$ (see Proposition~\ref{kernest}), g has no deterministic zeroes. Furthermore the random zeroes of $g$ are almost surely simple
(\cite[Lemma~2.4.1]{HKPV}). We study the zero set $\mathcal Z(g)$ through the counting measure
\[
n_g=\frac1{2\pi}\Delta\log|g|
\]
(this equality is to be understood in the distributional sense). The Edelman-Kostlan formula (\cite[Theorem 1]{Sod} or \cite[Section 2.4]{HKPV}) for the
density of zeroes gives
\[\E[n_g(z)]=\frac{1}{4\pi}\Delta\log \K(z,z) dm(z).\]
We finally note that $\Delta\log \K(z,z)\simeq\frac{1}{\rho(z)^2}$ (see Section~\ref{kernels}) which, as we have already noted, can be viewed as a
regularisation of the measure $\mu$.

We will modify this construction by re-scaling the weight $\phi$, so that the zeroes will be even better distributed. Specifically, let $L$ be a positive
parameter and consider instead the weight $\phi_L=L\phi$ (and $\rho_L=\rho_{L\mu}$). For each $L$ we take a basis $(e_n^L)_n$ for the space $\FL=\mathcal
F_{\phi_L}^2$ and define
\begin{equation}\label{basisdefn}
g_L(z)=\sum_n\ a_n e_n^L (z)
\end{equation}
and
\[
\K_L(z,w)=\E[g(z)\overline{g(w)}]=\sum_ne_n^L (z)\overline{e_n^L (w)}.
\]

The following result states that the corresponding zero set, suitably scaled, is well distributed with respect to the measure $\mu$ for large values of
$L$.
\begin{thm}\label{almostsurebasis}
Let $\psi$ be a smooth real-valued function with compact support in $\C$ (which we always assume is not identically zero), let $n_L$ be the counting
measure on the zero set of $g_L$ and define the random variable $n(\psi,L)=\frac1L\int\psi dn_L$.

\noindent (a)
\[
\left|\E\left[n(\psi,L)\right]-\frac{1}{2\pi}\int\psi d\mu\right|\lesssim\frac{1}{L}\iC|\Delta\psi(z)|dm(z),
\]
where the implicit constant depends only on the doubling constant of the measure $\mu$.

\noindent (b) If we restrict $L$ to taking integer values then, almost surely,
\[
n(\psi,L)\rightarrow\frac{1}{2\pi}\int\psi d\mu
\]
as $L\rightarrow\infty$.
\end{thm}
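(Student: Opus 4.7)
The plan for part (a) is to apply the Edelman--Kostlan formula and then transfer the Laplacian onto $\psi$ by integration by parts. Since $\psi$ is smooth and compactly supported, one obtains
\[
\E\left[n(\psi,L)\right]=\frac{1}{4\pi L}\iC \Delta\psi(z)\,\log\K_L(z,z)\,dm(z),
\]
while by the same manipulation $\frac{1}{2\pi}\int\psi\,d\mu=\frac{1}{2\pi}\int\psi\,\Delta\phi\,dm=\frac{1}{4\pi L}\int\Delta\psi(z)\cdot 2L\phi(z)\,dm(z)$. Subtracting,
\[
\E\left[n(\psi,L)\right]-\frac{1}{2\pi}\iC \psi\,d\mu=\frac{1}{4\pi L}\iC \Delta\psi(z)\bigl[\log\K_L(z,z)-2L\phi(z)\bigr]\,dm(z).
\]
A crucial observation at this stage is that $\int\Delta\psi\,dm=0$ by Green's identity, so we are free to modify the bracketed integrand by any function depending on $L$ alone without affecting the integral.

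The main step, and the principal obstacle, is a reproducing kernel estimate that I expect to be developed in Section~\ref{kernels}: starting from the equivalence $\K_L(z,z)\asymp e^{2L\phi(z)}/\rho_L(z)^2$, one should obtain a pointwise decomposition $\log\K_L(z,z)-2L\phi(z)=c_L+R_L(z)$, where $c_L$ depends only on $L$ and $\|R_L\|_\infty$ is bounded by a constant depending only on the doubling constant $C_\mu$. For the Bargmann--Fock space this is essentially trivial (one may take $R_L\equiv 0$), but in the generalised doubling setting it requires careful peak-section and off-diagonal estimates on $\K_L$. Once this decomposition is in hand, the triangle inequality immediately yields the bound claimed in (a).

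For part (b), the plan is a standard Chebyshev plus Borel--Cantelli argument, using the variance estimate from Theorem~\ref{variance}. As announced in the introduction, the variance decays faster than $L^{-2}$, hence is summable over integer $L$. For any $\varepsilon>0$,
\[
\sum_{L=1}^{\infty}\Pro\bigl[|n(\psi,L)-\E[n(\psi,L)]|>\varepsilon\bigr]\leq \varepsilon^{-2}\sum_{L=1}^{\infty}\V[n(\psi,L)]<\infty,
\]
so Borel--Cantelli gives $n(\psi,L)-\E[n(\psi,L)]\to 0$ almost surely along integer $L$. Combining with the convergence $\E[n(\psi,L)]\to\frac{1}{2\pi}\int\psi\,d\mu$ from part (a) yields the desired almost-sure limit.
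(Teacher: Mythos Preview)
Your approach to both parts is essentially the same as the paper's. One correction: in this paper's normalisation of $\FL$ the diagonal estimate is simply $\K_L(z,z)\simeq e^{2\phi_L(z)}$ (Proposition~\ref{kernest}(b)), with no $\rho_L(z)^{-2}$ factor; hence $\bigl|\log\K_L(z,z)-2\phi_L(z)\bigr|$ is already bounded by a constant depending only on $C_\mu$, and neither the decomposition $c_L+R_L(z)$ nor the observation $\int\Delta\psi\,dm=0$ is needed. For (b), the paper argues by summing the second moments $\E\bigl[(n(\psi,L)-\frac{1}{2\pi}\int\psi\,d\mu)^2\bigr]\lesssim L^{-2}$ directly rather than via Chebyshev and Borel--Cantelli, but the two arguments are equivalent; your claim that $\V[n(\psi,L)]$ is summable follows from Theorem~\ref{variance} together with \eqref{rhoL}, which gives $\V[n(\psi,L)]\lesssim L^{-2-2\gamma}$.
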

The proof of part (b) of this result uses an estimate on the the decay of the variance of $n(\psi,L)$ which is interesting by itself.
\begin{thm}\label{variance}
For any smooth function $\psi$ with compact support in $\C$
\[
\V[n(\psi,L)]\simeq\frac1{L^2}\iC\Delta(\psi(z))^2\rho_L(z)^2dm(z).
\]
\end{thm}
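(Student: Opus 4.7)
The starting point is the distributional Poincar\'e--Lelong identity $n_L=\frac1{2\pi}\Delta\log|g_L|$. Since $\psi$ is smooth and compactly supported, Green's formula gives
\[
n(\psi,L)=\frac1{2\pi L}\iC \Delta\psi(z)\,\log|g_L(z)|\,dm(z),
\]
so that, after expanding the square and taking expectations,
\[
\V[n(\psi,L)]=\frac1{4\pi^2 L^2}\iC\iC \Delta\psi(z)\Delta\psi(w)\,C_L(z,w)\,dm(z)\,dm(w),
\]
where $C_L(z,w)=\mathrm{Cov}(\log|g_L(z)|,\log|g_L(w)|)$. The justification that $\log|g_L|\in L^1_{\mathrm{loc}}$ and that the second-moment computation is legitimate is routine given that $(g_L(z),g_L(w))$ is jointly complex Gaussian and that $\K_L(z,z)$ is bounded below on $\supp\psi$.

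Next I would invoke the Sodin--Tsirelson identity (see \cite[Ch.~3]{HKPV}) for the covariance of logs of a GAF, which expresses $C_L$ in closed form through the normalized kernel $\widehat\K_L(z,w)=|\K_L(z,w)|^2/(\K_L(z,z)\K_L(w,w))\in[0,1]$:
\[
C_L(z,w)=\frac14\sum_{k=1}^\infty \frac{\widehat\K_L(z,w)^k}{k^2}.
\]
Writing $F(x)=\sum_{k\ge1}x^k/k^2$ (so $F(0)=0$, $F(1)=\pi^2/6$, $F$ smooth and increasing on $[0,1]$), the problem reduces to showing
\[
\iC\iC \Delta\psi(z)\Delta\psi(w)\,F(\widehat\K_L(z,w))\,dm(z)\,dm(w)\simeq \iC (\Delta\psi(z))^2\rho_L(z)^2\,dm(z).
\]

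The decisive tool, to be established in the kernel estimates of Section~\ref{kernels}, is off-diagonal decay of $\widehat\K_L$: roughly, $\widehat\K_L(z,w)\lesssim \exp(-c|z-w|/\rho_L(z))$ when $|z-w|\gtrsim\rho_L(z)$, while $\widehat\K_L(z,w)$ is bounded below by a positive constant whenever $|z-w|\le \eta \rho_L(z)$ for a small enough $\eta>0$. With these in hand one rescales $w=z+\rho_L(z)u$ in the inner integral; the quasi-invariance of $\rho_L$ (a consequence of the doubling property of $\mu$) together with the universal Gaussian-type behaviour of $\widehat\K_L$ at the scale $\rho_L$ makes
\[
\iC F(\widehat\K_L(z,z+\rho_L(z)u))\,dm(u)
\]
a bounded function of $z$, bounded above and below by positive constants. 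Replacing $\Delta\psi(w)$ by $\Delta\psi(z)$ on this rescaled scale produces an error that is lower-order (because $\Delta\psi$ is smooth and $\rho_L\to 0$ uniformly on compacta), yielding the claimed $\simeq$.

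The main obstacle is the lower bound. Upper bounds come almost for free from $F(\widehat\K_L)\le F(1)=\pi^2/6$ combined with the off-diagonal decay and the elementary inequality $|\Delta\psi(z)\Delta\psi(w)|\le\tfrac12((\Delta\psi(z))^2+(\Delta\psi(w))^2)$. For the lower bound one must rule out cancellation between the values $\Delta\psi(z)$ and $\Delta\psi(w)$ across the near-diagonal strip; this is handled by a Taylor expansion $\Delta\psi(w)=\Delta\psi(z)+O(|z-w|)$, keeping the leading term and controlling the remainder in $L^2$ by the smoothness of $\psi$ and the factor $\rho_L(z)$ (which is small). The other delicate point is the quantitative lower bound on $\widehat\K_L(z,w)$ in a sub-disc of size $\eta\rho_L(z)$, which is the true analytic content of the argument and relies on sharp two-sided estimates of $\K_L(z,w)$ in terms of the weight $\phi_L$.
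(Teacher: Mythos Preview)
Your proposal is correct and follows essentially the same route as the paper: the variance is rewritten via the dilogarithm identity $C_L(z,w)=\tfrac14 F(\widehat\K_L(z,w))$, and then the inner integral is localized to the scale $\rho_L(z)$ using the off-diagonal decay of $\widehat\K_L$ together with the near-diagonal lower bound (Proposition~\ref{kernest}(a) and (d)), after which $\Delta\psi(w)$ is replaced by $\Delta\psi(z)$ with an $o(1)$ error. The paper organizes the near/far split slightly differently---cutting at $d_L(z,w)=(\alpha\log L)^{1/\epsilon}$ rather than rescaling $w=z+\rho_L(z)u$---and the available decay is $\exp(-d_L^\epsilon(z,w))$ (hence only sub-exponential in $|z-w|/\rho_L(z)$ via Lemma~\ref{distance}) rather than the exponential rate you wrote; but this does not affect the structure of the argument, and both variants yield $I_3\simeq\rho_L(z)^2$ for the diagonal contribution.
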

\begin{rem}
We may estimate the dependence on $L$ using \eqref{rhoL} to see that the integral decays polynomially in $L$. If the measure $\mu$ is locally flat (see
Definition~\ref{localflat}) then we see that the variance decays as $L^{-3}$, just as in \cite{ST1}.
\end{rem}

In the special case $\phi(z)=|z|^2/2$ (the factor $1/2$ is simply a convenient normalisation) it is easy to see that the set
$(\frac1{\pi\sqrt{2}}\frac{(\sqrt{L}z)^n}{\sqrt{n!}})_{n=0}^\infty$ is an orthonormal basis for the corresponding Fock space, so that the construction
just given corresponds to the GAF studied in \cite{ST1} and \cite{ST3}. More generally if $\phi(z)=|z|^\alpha/2$ and $\alpha>0$ then the set
$(\frac{(L^{1/\alpha}z)^n}{c_{\alpha n}})_{n=0}^\infty$ is an orthonormal basis for the corresponding Fock space, for some $c_{\alpha
n}\simeq\Gamma(\frac2\alpha n+1)^{1/2}$ (and the implicit constants depend on $\alpha$, see the Appendix).

However, besides these special cases, we have very little information about the behaviour of an orthonormal basis for $\FL$. For this reason we also study
random functions that are constructed via frames.
\begin{defn}
Let $(X,\langle\cdot,\cdot\rangle)$ be an inner product space. A sequence $(x_n)_n$ in $X$ is said to be a \textit{frame} if there exist $0<A\leq B$ such
that
\[
A\|x\|^2\leq\sum_n|\langle x,x_n\rangle|^2\leq B\|x\|^2
\]
for all $x\in X$.
\end{defn}
It can be shown that this implies that there exists a sequence $(\tilde{x}_n)_n$ in $X$ (the canonical dual frame) such that
\[
x=\sum_n\langle x,\tilde{x}_n\rangle x_n
\]
and
\[
\frac1B\|x\|^2\leq\sum_n|\langle x,\tilde{x}_n\rangle|^2\leq\frac1A\|x\|^2
\]
for all $x\in X$. Thus a frame can be thought of as a generalisation of a basis that retains the spanning properties of a basis although the elements of
the frame are not, in general, linearly independent. (For a proof of the above facts and a general introduction to frames see, for example, \cite[Chapter
5]{Chr}.)

We will consider frames for $\FL$ consisting of normalised reproducing kernels, $k_\zeta (z)=\frac{\K_L(z,\zeta)}{\K_L(\zeta,\zeta)^{1/2}}$ (we ignore the
dependence on $L$ to simplify the notation). We consider frames of the form $(k_\lambda)_{\lil_L}$ (where the index set $\Lambda_L\subset\C$ is a sampling
sequence, see Section~\ref{kernels} for the definition). The advantage of this approach is that we have estimates for the size of the reproducing kernel
(Theorem~\ref{kernest}), and so we also have estimates for the size of the frame elements. We now define
\begin{equation}\label{framedefn}
f_L(z)=\sum_{\lil_L} a_\lambda k_\lambda (z)
\end{equation}
where $a_\lambda$ is a sequence of iid $\mathcal{N}_\C(0,1)$ random variables indexed by the sequence $\Lambda_L$. The covariance kernel for $f_L$ is
given by
\[
K_L(z,w)=\E[f_L(z)\overline{f_L(w)}]=\sum_{\lil_L} k_\lambda (z)\overline{k_\lambda (w)}
\]
which satisfies similar estimates to $\K_L$ (see Proposition~\ref{covkerest}).

Since the proof of Theorem~\ref{almostsurebasis} uses only estimates for the size of the covariance kernel we may state an identical theorem for the GAF
defined via frames. However in this case we also have the following stronger result.

\begin{thm}\label{largedevsmooth}
Let $n_L$ be the counting measure on the zero set of the GAF $f_L$ defined via frames \eqref{framedefn}, $\psi$ be a smooth real-valued function with
compact support in $\C$, $n(\psi,L)=\frac1L\int\psi dn_L$.

\noindent (a)
\[
\left|\E\left[n(\psi,L)\right]-\frac{1}{2\pi}\int\psi d\mu\right|\lesssim\frac{1}{L}\iC|\Delta\psi(z)|dm(z),
\]
where the implicit constant depends only on the doubling constant of the measure $\mu$.

\noindent (b) Let $\delta>0$. There exists $c>0$ depending only on $\delta$, $\psi$ and $\mu$ such that
\begin{equation}\label{largedevsmootheq}
\Pro\left[\Big|\frac{n(\psi,L)}{\frac{1}{2\pi}\int\psi d\mu}-1\Big|>\delta\right]\leq e^{-cL^2}.
\end{equation}
as $L\rightarrow\infty$.
\end{thm}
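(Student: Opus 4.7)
The plan is to pass from the zero counting measure to $\log|f_L|$ via integration by parts and then exploit the Gaussian structure of the frame expansion. Since $n_L=\frac1{2\pi}\Delta\log|f_L|$ in the distributional sense and $\psi$ is smooth with compact support, Green's formula yields
\[
n(\psi,L)=\frac{1}{2\pi L}\iC\Delta\psi(z)\log|f_L(z)|\,dm(z).
\]
For part (a), I apply the Edelman-Kostlan formula $\E[n_L]=\frac{1}{4\pi}\Delta\log K_L(z,z)\,dm$, integrate by parts once more, and use the covariance kernel asymptotic $\log K_L(z,z)=2L\phi(z)+O(1)$ locally uniformly in $z$ (a consequence of the estimates in Section~\ref{kernels}). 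A second integration by parts against $\mu=\Delta\phi$ gives the claimed inequality, with the error controlled by $\frac{1}{L}\int|\Delta\psi|\,dm$.

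For part (b), normalise by setting $F_L(z)=f_L(z)/\sqrt{K_L(z,z)}$. Pointwise $|F_L(z)|$ has the law of the modulus of a standard complex Gaussian, so $\E\log|F_L(z)|$ is a constant independent of $z$; since $\int\Delta\psi\,dm=0$ by compact support, we obtain
\[
n(\psi,L)-\E[n(\psi,L)]=\frac{1}{2\pi L}\iC\Delta\psi(z)\log|F_L(z)|\,dm(z)=:\frac{X_L}{2\pi L}.
\]
By part (a) the bias $|\E[n(\psi,L)]-\frac{1}{2\pi}\int\psi\,d\mu|$ is $O(1/L)$, so it suffices to establish, for some $\eta=\eta(\delta,\psi,\mu)>0$ and $c=c(\eta)>0$, the bound $\Pro[|X_L|>\eta L]\leq e^{-cL^2}$ for $L$ large.

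The estimate on $X_L$ follows the Sodin-Tsirelson strategy and splits according to whether $|F_L|$ is atypically large or atypically small. The ``large'' part is handled by covering $\supp\psi$ with a grid of $\asymp L\mu(\supp\psi)$ disks $D_j$ of radius comparable to $\rho_L(z_j)$, on which $K_L(z,z)$ varies by a bounded factor. Since $f_L$ is a linear functional of the Gaussian vector $(a_\lambda)_{\lambda\in\Lambda_L}$, Borell's inequality combined with standard moment bounds for the maximum of a GAF on its natural scale yields $\Pro[\sup_{\supp\psi}|F_L|>e^{t}]\leq C\,e^{-ct^2}$; feeding this into the portion of $\int\Delta\psi\log|F_L|$ supported on $\{|F_L|>1\}$ and optimising the threshold produces the desired $e^{-cL^2}$ rate.

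The main obstacle is the contribution to $X_L$ from the set $\{|F_L|<1\}$, which corresponds to $|f_L|$ being atypically small on a region where $\Delta\psi$ has a definite sign; this is a quantitative hole-type event and is the genuine source of the $L^2$ rate. I plan to treat it by a Jensen-Offord argument: if $X_L$ is very negative on such a region then, by subharmonicity of $\log|f_L|$, $|f_L|$ must be uniformly small on the enlargement of some disk $B\subset\supp\psi$, which combined with the sampling inequality for $\Lambda_L$ translates into a near-singular constraint on the $\asymp L\mu(B)$ Gaussian coefficients $a_\lambda$ with $\lambda$ near $B$. A direct Gaussian measure computation, analogous to the hole probability estimates of \cite{ST1,ST3}, then delivers the $e^{-cL^2}$ rate. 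The inhomogeneous doubling weight $\phi$ enters only through the covariance and sampling constants.
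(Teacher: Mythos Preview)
Your part (a) is correct and matches the paper. For the ``large'' half of (b), the Borell route is a valid alternative to the paper's argument (which truncates the frame sum and applies Cauchy--Schwarz to the $\asymp L\mu(\supp\psi)$ nearby Gaussian coefficients), provided you apply Borell to the real and imaginary parts of $F_L$ and supply a bound such as $\E\sup_{\supp\psi}|F_L|=O(\sqrt{\log L})$ via Dudley's entropy bound.

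The gap is in the ``small'' half. The implication you assert---that if $\int\Delta\psi\,\log|F_L|\,dm$ is very negative then, by subharmonicity of $\log|f_L|$, $|f_L|$ must be \emph{uniformly} small on an enlarged disk---is false. Subharmonicity bounds the centre value of $\log|f_L|$ by the disk average, not conversely; and $\int_B\log^-|F_L|\,dm$ can be enormous simply because $f_L$ has many zeros in $B$, with $|f_L|$ large off a small set. So the leap from a negative $L^1$ contribution to a sampling/small-ball constraint on $\asymp L\mu(B)$ coefficients is unjustified.

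The paper runs the argument in the opposite direction. First (Lemma~\ref{maxlogphi}) it shows that outside a set of probability $e^{-cL^2}$ there \emph{exists} $\zeta\in D$ with $\log|f_L(\zeta)|-\phi_L(\zeta)\geq-\delta L$; this is where the near-independence of $\hat f_L$ at $\asymp L\mu(D)$ well-separated points and the Gaussian small-ball estimate enter. Then (Lemma~\ref{keydisc}) a Riesz-type decomposition on $D$ with positive Poisson and Green kernels (Theorem~\ref{Pascuas}, together with Lemma~\ref{Green} to control the Green term against $\Delta\phi_L$) converts this single pointwise lower bound, combined with the already-established upper bound on $\max(\log|f_L|-\phi_L)$, into
\[
\int_D\bigl|\log|f_L|-\phi_L\bigr|\,dm\lesssim L\,\mu(D)\,r^2\rho(z_0)^2.
\]
Covering $\supp\psi$ by discs of fixed small $\mu$-mass and summing (Lemma~\ref{keyupper}) then yields the required $L^1$ bound and hence \eqref{largedevsmootheq}. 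Your Jensen--Offord heuristic points toward the right ingredients, but the analytic device that bridges a pointwise lower bound to the $L^1$ norm of $\log|f_L|-\phi_L$---the Poisson/Green decomposition---is the step you are missing, and it cannot be replaced by subharmonicity alone.
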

The proof of part (a) is identical to the proof of Theorem~\ref{almostsurebasis} (a) (using the appropriate estimates for the covariance kernel of $f_L$).
It is also easy to see, by an appeal to the first Borel-Cantelli Lemma, that the large deviations estimate \eqref{largedevsmootheq} implies that in this
case we also have almost sure convergence exactly as stated in Theorem~\ref{almostsurebasis} (b). This result has an obvious corollary.
\begin{cor}\label{largedev}
Suppose that $n_L$ is the counting measure on the zero set of the GAF $f_L$ defined via frames \eqref{framedefn} and $U$ is an open bounded subset of the
complex plane.

\noindent (a)
\[
\E\left[\frac1Ln_L(U)\right]\rightarrow\frac{1}{2\pi}\mu(U)
\]
as $L\rightarrow\infty$.

\noindent (b) Let $\delta>0$. There exists $c>0$ depending only on $\delta$, $U$ and $\mu$ such that for sufficiently large values of $L$
\[
\Pro\left[\Big|\frac{\frac1Ln_L(U)}{\frac1{2\pi}\mu(U)}-1\Big|>\delta\right]\leq e^{-cL^2}.
\]
\end{cor}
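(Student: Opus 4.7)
The plan is to sandwich $\mathbf{1}_U$ between two smooth compactly supported real-valued functions and then apply Theorem~\ref{largedevsmooth}. Given $\eta>0$, I construct $\psi_\eta^\pm\in C_c^\infty(\C)$ with $\psi_\eta^-\le\mathbf{1}_U\le\psi_\eta^+$ and
\[
\tfrac{1}{2\pi}\int\psi_\eta^-\,d\mu\ge\tfrac{1}{2\pi}\mu(U)-\eta,\qquad \tfrac{1}{2\pi}\int\psi_\eta^+\,d\mu\le\tfrac{1}{2\pi}\mu(U)+\eta.
\]
For the lower approximation, mollify $\mathbf{1}_{U_\epsilon}$ with $U_\epsilon=\{z\colon d(z,U^c)>\epsilon\}$ and use monotone convergence as $\epsilon\to 0$; for the upper one, use outer regularity of $\mu$ (finite on bounded sets by doubling) to cover $\overline{U}$ by an open $V$ with $\mu(V)\le\mu(\overline{U})+\eta$, and take a smooth cutoff equal to $1$ on $\overline{U}$ and supported in $V$. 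With these choices, pointwise
\[
n(\psi_\eta^-,L)\le\tfrac{1}{L}n_L(U)\le n(\psi_\eta^+,L).
\]

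Part (a) then follows from Theorem~\ref{largedevsmooth}(a) applied to $\psi_\eta^\pm$: the error $\tfrac{1}{L}\int|\Delta\psi_\eta^\pm|\,dm$ depends on $\eta$ but not on $L$, so taking expectations in the sandwich and letting $L\to\infty$ followed by $\eta\to 0$ yields $\E[\tfrac{1}{L}n_L(U)]\to\tfrac{1}{2\pi}\mu(U)$ (provided $\mu(\partial U)=0$, which may be arranged by a slight perturbation of $U$, or observed directly from the fact that the Edelman-Kostlan intensity is absolutely continuous so that zeroes almost surely avoid a Lebesgue-null boundary).

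For part (b), fix $\delta>0$ and choose $\eta$ so small that $\tfrac{1}{2\pi}\int\psi_\eta^\pm d\mu$ lies within a multiplicative factor $(1\pm\delta/3)$ of $\tfrac{1}{2\pi}\mu(U)$; critically, the functions $\psi_\eta^\pm$ are fixed \emph{before} invoking the deviation estimate, so that the constant produced by Theorem~\ref{largedevsmooth}(b) depends only on $\delta$, $U$ and $\mu$, not on $L$. Applying that theorem to $\psi_\eta^+$ and $\psi_\eta^-$ with parameter $\delta/3$ yields $c>0$ such that, with probability at least $1-2e^{-cL^2}$, both $n(\psi_\eta^\pm,L)$ differ from their means by a factor of at most $(1\pm\delta/3)$. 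Combined with the sandwich,
\[
\frac{\tfrac{1}{L}n_L(U)}{\tfrac{1}{2\pi}\mu(U)}\in\bigl[(1-\delta/3)^2,\,(1+\delta/3)^2\bigr]\subset[1-\delta,\,1+\delta],
\]
and for $L$ large enough the factor $2$ in $2e^{-cL^2}$ is absorbed into a slightly smaller constant $c'>0$. The whole argument is essentially bookkeeping; the only subtlety is fixing the smooth approximations before invoking the large-deviations bound of Theorem~\ref{largedevsmooth}(b).
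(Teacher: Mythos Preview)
Your proposal is correct and follows essentially the same approach as the paper: sandwich $\chi_U$ between two smooth compactly supported functions whose $\mu$-integrals are close to $\mu(U)$, then apply Theorem~\ref{largedevsmooth} to each. The paper's proof is nearly identical in structure (it uses $\psi_1\le\chi_U\le\psi_2$ with $\int\psi_1\,d\mu\ge(1-\delta)\mu(U)$ and $\int\psi_2\,d\mu\le(1+\delta)\mu(U)$), and your observation that the smooth approximants must be fixed \emph{before} invoking the large-deviation bound is exactly the point. Your parenthetical about $\mu(\partial U)=0$ is slightly muddled---the absolute continuity of the Edelman--Kostlan intensity is not what resolves the approximation of $\mu(U)$ from above---but the paper simply glosses over this issue entirely, so you are if anything being more careful.
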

\begin{rem}
As before, the large deviations estimate combined with the first Borel-Cantelli Lemma implies that$\frac1Ln_L(U)\rightarrow\frac1{2\pi}\mu(U)$ almost
surely when $L$ is restricted to integer values.
\end{rem}

It is well known that the linear statistics are asymptotically normal for the Poisson point process, and we also show that the smooth linear statistics
for our zero sets are asymptotically normal, for large values of $L$, if the measure $\mu$ is locally flat. We shall state and prove this result only for
the GAF defined via frames (that is \eqref{framedefn}) but it is easy to verify that the proof works equally well for the GAF defined via bases
\eqref{basisdefn} since it relies only on estimates for the size of the covariance kernel.

\begin{thm}\label{asympnorm}
Let $\psi$ be a smooth function with compact support in $\C$ let $n_L$ be the counting measure on the zero set of the GAF defined via frames
\eqref{framedefn}, and suppose that measure $\mu$ is locally flat (see Definition~\ref{localflat}). Define $n(\psi,L)=\frac1L\int\psi dn_L$ as before.
Then the random variable
\[
\frac{n(\psi,L)-\E[n(\psi,L)]}{\V[n(\psi,L)]^{1/2}}
\]
converges in distribution to $\mathcal{N}(0,1)$ as $L\rightarrow\infty$.
\end{thm}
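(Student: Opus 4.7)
The plan is to apply the method of cumulants, following Sodin and Tsirelson. First I would use the distributional identity $n_L=\tfrac{1}{2\pi}\Delta\log|f_L|$ together with Green's formula to write
\[
n(\psi,L)=\frac{1}{2\pi L}\iC\log|f_L(z)|\,\Delta\psi(z)\,dm(z).
\]
Since $\hat f_L(z):=f_L(z)/K_L(z,z)^{1/2}$ is pointwise a standard complex Gaussian, the decomposition $\log|f_L(z)|=\log|\hat f_L(z)|+\tfrac12\log K_L(z,z)$ separates off a deterministic term. It therefore suffices to prove asymptotic normality (after normalisation by the standard deviation) of
\[
Y_L:=\frac{1}{2\pi L}\iC\bigl(\log|\hat f_L(z)|-\E\log|\hat f_L(z)|\bigr)\Delta\psi(z)\,dm(z).
\]

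The method of cumulants reduces the problem to showing $\mathrm{cum}_k(Y_L)=o(\V[Y_L]^{k/2})$ for every $k\geq 3$. Expanding the centred log-modulus of a standard complex Gaussian in its Wiener chaos (equivalently, diagonalising $\log|z|^2$ against the complex Gaussian measure via Laguerre polynomials) and applying the complex Wick formula, the $k$-th cumulant admits the cycle representation
\begin{multline*}
\mathrm{cum}_k(Y_L)=\frac{1}{(2\pi L)^k}\sum_{m\geq 1}c_{k,m}\iC\!\cdots\!\iC \re\bigl[\hat K_L(z_1,z_2)\cdots\hat K_L(z_k,z_1)\bigr]^{m}\\
\times\Delta\psi(z_1)\cdots\Delta\psi(z_k)\,dm(z_1)\cdots dm(z_k),
\end{multline*}
where $\hat K_L(z,w)=K_L(z,w)/(K_L(z,z)K_L(w,w))^{1/2}$ and the coefficients $c_{k,m}$ are of order $m^{-k}$; this is the formula worked out by Sodin and Tsirelson in the Bargmann-Fock setting.

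The key analytic input is a Gaussian off-diagonal estimate for the normalised kernel. By Proposition~\ref{covkerest} one has $K_L(z,z)\simeq\rho_L(z)^{-2}$, and under local flatness (Definition~\ref{localflat}) the doubling-scale decay of $K_L$ can be upgraded to
\[
|\hat K_L(z,w)|\lesssim \exp\bigl(-c|z-w|^2/\rho_L(z)^2\bigr)
\]
uniformly in $L$. Combining this with the reproducing-type identity $\iC|\hat K_L(z,w)|^2 K_L(w,w)\,dm(w)=1$, each cycle integral concentrates on a ball of radius $\sim\rho_L(z_1)$ around a single free variable, yielding
\[
|\mathrm{cum}_k(Y_L)|\lesssim \frac{1}{L^k}\iC|\Delta\psi(z)|^k\rho_L(z)^{2(k-1)}\,dm(z).
\]
In the locally flat regime $\rho_L\simeq L^{-1/2}$, so $|\mathrm{cum}_k(Y_L)|\lesssim L^{-(2k-1)}$, while Theorem~\ref{variance} gives $\V[Y_L]\simeq L^{-3}$. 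Hence $\mathrm{cum}_k(Y_L)/\V[Y_L]^{k/2}\lesssim L^{1-k/2}\to 0$ for every $k\geq 3$, and the method of cumulants delivers convergence in distribution to $\mathcal{N}(0,1)$.

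The hard part will be the third step: upgrading the doubling-scale bounds of Proposition~\ref{covkerest} to a genuine Gaussian off-diagonal decay for $\hat K_L$ under the local flatness hypothesis, and carrying out the $L$-uniform chain estimate on the cycle integrals. This is precisely where local flatness is indispensable --- without it the normalised kernel decays only on the doubling scale and the cumulant ratios need not vanish at the required rate. A secondary technical point is justifying the termwise treatment of the series in $m$, which follows from the uniform bound $|\hat K_L|\leq 1$ together with the decay of the coefficients $c_{k,m}$.
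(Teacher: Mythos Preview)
Your approach via cumulants is essentially the mechanism that sits \emph{inside} the black box the paper invokes: rather than expanding cumulants by hand, the paper quotes the Sodin--Tsirelson criterion (\cite[Theorem~2.2]{ST1}) and simply verifies its two hypotheses \eqref{normratio} and \eqref{norm0} using Proposition~\ref{covkerest}. So at the structural level your route is not wrong, just longer --- you are reproving \cite[Theorem~2.2]{ST1} inline.

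There is, however, a genuine gap in what you call ``the hard part.'' You claim that under local flatness the off-diagonal bound of Proposition~\ref{covkerest} can be upgraded to Gaussian decay $|\hat K_L(z,w)|\lesssim\exp(-c|z-w|^2/\rho_L(z)^2)$. This is false: the paper itself points out (remark after Definition~\ref{fastdef}, citing \cite[Proposition~1.18]{Ch}) that there exist weights with $0<c<\Delta\phi<C$ --- hence locally flat --- for which the kernel does \emph{not} have fast decay. Local flatness gives you the scaling $\rho_L\simeq L^{-1/2}\rho$ (equation~\eqref{rhoLideal}), nothing more about pointwise kernel decay. Relatedly, Proposition~\ref{covkerest}(b) says $K_L(z,z)\simeq e^{2\phi_L(z)}$, not $\rho_L(z)^{-2}$; and $K_L$ is the covariance kernel for the frame GAF, not the reproducing kernel for $\FL$, so your ``reproducing-type identity'' holds only up to constants via the norm equivalence.

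The good news is that you do not \emph{need} Gaussian decay. The sub-exponential bound $|\hat K_L(z,w)|\lesssim e^{-cd_L^\epsilon(z,w)}$ from Proposition~\ref{covkerest}(a) already gives $\int_{\C}|\hat K_L(z,w)|^m\,dm(w)\simeq\rho_L(z)^2$ for each $m\geq1$ (compare the $I_3$ estimate in the proof of Theorem~\ref{variance}), which is exactly the concentration you want for the cycle integrals. Local flatness then enters only through \eqref{rhoLideal} to convert $\rho_L^2$ into a clean power of $L$ --- precisely as in the paper's verification of \eqref{normratio}. If you rewrite your ``key analytic input'' paragraph accordingly, the cumulant argument goes through; but at that point you will have reproduced the proof of \cite[Theorem~2.2]{ST1}, and it is cleaner to cite it.
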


We are also interested in the `hole probability', the probability that there are no zeroes in a region of the complex plane. When we take
$\phi(z)=|z|^2/2$ then the asymptotic decay of the hole probability for the zero set of the GAF defined via bases was computed in \cite{ST3}, and the more
precise version
\[
\Pro[n_{g_L}(D(z_0,r))=0]=\exp\Big\{-\frac{e^2}{4}L^2r^4(1+o(1))\Big\}
\]
as $L\rightarrow\infty$ was obtained in \cite[Theorem 1.1]{Nis10} (we note that the author considers $L=1$ and discs of large radius centred at the
origin, however the results are equivalent by re-scaling and translation invariance). If $\phi(z)=|z|^\alpha/2$ and we consider the random function $g_L$
generated by the bases $(\frac{(L^{1/\alpha}z)^n}{c_{\alpha n}})_{n=0}^\infty$ then we see that
\[
\Pro[n_{g_L}(D(0,r))=0]=\Pro[n_{g_1}(D(0,rL^{1/\alpha}))=0].
\]
We may now use \cite[Theorem 1]{Nis11} to see that
\[
\Pro[n_{g_1}(D(0,rL^{1/\alpha}))=0]=\exp\Big\{-\frac{\alpha e^2}8r^{2\alpha}L^2(1+o(1))\Big\}
\]
as $L\rightarrow\infty$ (we omit the details), however we no longer have translation invariance.

Our first result says that we always have an upper bound of the form $e^{-cL^2}$, however we have no estimate for the lower bound in general.
\begin{thm}\label{basishole}
Suppose that $n_L$ is the counting measure on the zero set of the GAF $g_L$ defined via bases \eqref{basisdefn}. Let $U$ be a bounded open subset of the
complex plane. There exists $c>0$ depending only on $U$ and $\mu$ such that for sufficiently large values of $L$
\[
\Pro[n_L(U)=0]\leq e^{-cL^2}.
\]
\end{thm}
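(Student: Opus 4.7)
My plan is to combine the Riesz decomposition of the superharmonic function $u:=\log|g_L|-L\phi$ (valid on the hole event) with a Gaussian small-ball estimate in dimension proportional to $L\mu(U)$. The intuition is that ``no zeros in $U$'' forces $\log|g_L|$ to match a specific harmonic-plus-Green-potential profile on compact subsets of $U$, imposing an order $L\mu(U)$ system of approximately independent constraints on the Gaussian $g_L$, each of individual probability of order $e^{-cL}$.

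\textbf{Deterministic step.} Suppose $g_L$ has no zeros in $U$. Then $u$ is superharmonic on $U$ with Riesz measure $L\mu$, so by the Riesz decomposition
\[
u(z)=h(z)+L\int_U G_U(z,w)\,d\mu(w),\qquad z\in U,
\]
where $G_U\ge 0$ is the Dirichlet Green's function of $U$ and $h$ is the harmonic extension of $u|_{\partial U}$ into $U$. Picking a compact $V\Subset U$ on which the Green potential $g_U(z):=\int_U G_U(z,w)\,d\mu(w)\ge c_0>0$, I obtain $\log|g_L(z)|\ge L\phi(z)+h(z)+Lc_0$ for every $z\in V$.

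\textbf{Probabilistic step.} I would take a $\rho_L$-separated sampling sequence $\Lambda_L$ for $\FL$, so that $N:=|\Lambda_L\cap V|\asymp L\mu(V)$. By Proposition~\ref{kernest} and the sampling theory of Section~\ref{kernels}, the normalised vector $Y=(g_L(\lambda)/\K_L(\lambda,\lambda)^{1/2})_{\lambda\in\Lambda_L\cap V}$ is a centred complex Gaussian in $\C^N$ with Gram matrix $\Sigma$ whose spectrum is bounded between two positive constants depending only on $\mu$. On the high-probability event $\{\min_V h\ge -Lc_0/2\}$ the deterministic step forces $|Y_\lambda|^2\gtrsim \rho_L(\lambda)^2 e^{Lc_0}$ simultaneously for all $\lambda\in\Lambda_L\cap V$. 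The Gaussian density bound $\Pro(Y\in A)\le \mathrm{vol}(A)/(\pi^N\det\Sigma)$ together with $\det\Sigma\ge\kappa^N$ (from the sampling characterisation) then yields an upper bound of order $\exp(-cLN)=\exp(-cL^2\mu(V))$ for this event.

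\textbf{Main obstacle.} The crucial technical step is controlling the auxiliary event $\{\min_V h\ge -Lc_0/2\}$ with probability at least $1-e^{-cL^2}$. The harmonic extension $h$ is an integral of $u|_{\partial U}$ against harmonic measure, and because $\log|g_L|$ has only an exponential lower tail, a naive single-point union bound over a $\rho_L$-net of $\partial U$ gives only a probability of the form $e^{-cL/\rho_L}$, which may fall short of $e^{-cL^2}$ depending on how rapidly $\rho_L$ decays. The delicate interplay between the interior small-ball estimate and the boundary concentration---treating $h$ as a linear functional of a correlated Gaussian process on $\partial U$ and exploiting the off-diagonal decay of $\K_L$ to extract effective independence---is where most of the work lies.
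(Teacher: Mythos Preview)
Your outline shares the paper's core idea---the hole event forces $|g_L|$ to be anomalously large in $U$---but the probabilistic step contains a genuine error and the two Gaussian estimates you invoke are assigned the wrong roles.

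\textbf{The density bound is inapplicable.} On your event each $|Y_\lambda|\gtrsim e^{Lc_0/2}$ (there is no $\rho_L(\lambda)^2$ factor once you normalise by $\K_L(\lambda,\lambda)^{1/2}\simeq e^{\phi_L(\lambda)}$). This is a \emph{large}-ball event for the Gaussian vector, and the set $A=\{y\in\C^N:|y_\lambda|\ge e^{Lc_0/2}\text{ for all }\lambda\}$ has infinite Lebesgue volume, so the inequality $\Pro(Y\in A)\le\mathrm{vol}(A)/(\pi^N\det\Sigma)$ gives nothing. In fact no multi-point argument is needed here: a single coordinate already satisfies $\Pro(|Y_\lambda|\ge e^{Lc_0/2})\le\exp(-ce^{Lc_0})\ll e^{-cL^2}$. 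This is precisely what the paper does: once the boundary term is under control, Jensen's formula at the centre $z_0$ gives $\log|g_L(z_0)|-\phi_L(z_0)\ge\delta L$ on the hole event, and the one-point Gaussian tail $\Pro[|g_L(z_0)|/\K_L(z_0,z_0)^{1/2}\gtrsim e^{\delta L}]\le\exp(-Ce^{2\delta L})$ finishes the proof.

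\textbf{Where the $N$-dimensional small-ball estimate actually enters.} You rightly isolate the boundary control of $h$ as the crux, but treating $h$ as a linear functional of the Gaussian process cannot work, since $\log|g_L|$ is nonlinear in $g_L$. The paper resolves this obstacle by first proving (Lemma~\ref{basismaxlog}) that $\max_{\overline D}(\log|g_L|-\phi_L)\ge-\delta L$ except with probability $e^{-c\delta\mu(D)L^2}$, and \emph{this} is where the $N\simeq L\mu(D)$-dimensional Gaussian small-ball argument you describe is used: on a sufficiently $\rho_L$-separated net $\{s_1,\dots,s_N\}\subset\overline D$ the normalised vector $(\hat g_L(s_j))_j$ has covariance close to the identity, so $\Pro(\max_j|\hat g_L(s_j)|\le e^{-\delta L})\le(CNe^{-2\delta L})^N$. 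This lower bound on the maximum, applied on many small subdiscs near $\partial D$, is then fed into a Riesz--Poisson decomposition (Lemma~\ref{basiskey} and Claims~\ref{claim0}--\ref{claim2}) to obtain $\int_{\partial D}(\log|g_L|-\phi_L)\ge-\delta L$ outside an $e^{-cL^2}$-exceptional set. In summary: the small-ball estimate in dimension $\sim L$ controls the boundary term, and a single-point Gaussian tail bounds the hole probability---the reverse of your allocation.
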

When we work with frames, because we have estimates for the pointwise decay of the reproducing kernel, we can prove much more. In this case we show that
we have the same upper bound (with a different constant) and that the upper bound is sharp (up to constants) under additional assumptions on the decay of
the kernel $\K_L$.

\begin{thm}\label{hole}
Suppose that $n_L$ is the counting measure on the zero set of the GAF $f_L$ defined via frames \eqref{framedefn}. Let $U$ be an open bounded subset of the
complex plane.

\noindent (a) There exist $c,C>0$ depending on $U$ and $\mu$, and $\tau\geq2$ depending only on $\mu$, such that for sufficiently large values of $L$
\[
e^{-CL^{\tau}}\leq\Pro[n_L(U)=0]\leq e^{-cL^2}.
\]

\noindent (b) If the reproducing kernel $\K_L$ has fast ${L}\sp 2$ off-diagonal decay (Definition~\ref{fastdef}) then we have $\tau=2$ in (a).
\end{thm}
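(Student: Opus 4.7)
My plan is to derive the upper bound directly from the large deviations estimate of Corollary~\ref{largedev}, and to obtain the lower bound by a Gaussian small-ball computation on the event that $f_L$ closely approximates a cleverly chosen non-vanishing element of $\FL$.

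For the upper bound in (a), since $U$ is open and $\mu$ is doubling (hence has full support), $\mu(U)>0$. If $n_L(U)=0$ then the normalized ratio $(n_L(U)/L)/(\mu(U)/(2\pi))$ is zero, so $\{n_L(U)=0\}$ is contained in the deviation event of Corollary~\ref{largedev} with, say, $\delta=1/2$, and thus $\Pro[n_L(U)=0]\leq e^{-cL^2}$ for sufficiently large $L$.

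For the lower bound in (a), the strategy is to produce an entire function $F\in\FL$ that does not vanish on a neighbourhood of $\overline U$, satisfies the pointwise lower bound $|F(z)|\gtrsim e^{L\phi(z)}/\rho_L(z)$ on $U$, and has $\|F\|_{\FL}^{2}\lesssim L^{\tau}$ for some $\tau=\tau(\mu)\geq 2$. One constructs $F$ by first finding a holomorphic $h$ on a neighbourhood of $\overline U$ with $\re h=\phi+O(1)$ (using H\"ormander's $L^2$-theory for $\bar\partial$, which is available because $\mu=\Delta\phi$ has finite mass on compact sets), setting $F_0=e^{Lh}$, multiplying by a smooth cutoff, and then solving a global $\bar\partial$-problem on $\C$ with weight $e^{-2L\phi}$ to extract an entire function of controlled norm. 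Writing $F=\sum_{\lambda\in\Lambda_L}b_\lambda k_\lambda$ in the canonical dual frame gives $\sum_\lambda |b_\lambda|^2\asymp\|F\|_{\FL}^{2}\lesssim L^\tau$ by the frame inequality. Consider the event
\[
E=\bigcap_{\lambda\in\Lambda_L}\bigl\{|a_\lambda-b_\lambda|<\delta_\lambda\bigr\}
\]
with $\delta_\lambda>0$ chosen so small that $\sum_\lambda\delta_\lambda|k_\lambda(z)|<\tfrac12|F(z)|$ on $U$; then $|f_L-F|<|F|/2$ on $U$, so $|f_L|>|F|/2>0$ there by the triangle inequality, whence $f_L$ has no zeroes in $U$. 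Independence of the Gaussians and the small-ball bound $\Pro[|a_\lambda-b_\lambda|<\delta_\lambda]\gtrsim \delta_\lambda^{2}e^{-|b_\lambda|^{2}}$ yield
\[
\Pro[E]\gtrsim\exp\Bigl(-C\sum_\lambda|b_\lambda|^{2}-2\sum_\lambda\log(1/\delta_\lambda)\Bigr)\geq e^{-C'L^{\tau}},
\]
after using the pointwise kernel estimates of Theorem~\ref{kernest} and the separation of $\Lambda_L$ to restrict the effective constraints to a polynomial-in-$L$ number of indices and to keep each $\log(1/\delta_\lambda)$ polynomial in $L$.

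For part (b), the fast $L^2$ off-diagonal decay of $\K_L$ forces the dual frame coefficients of any peak function concentrated on $U$ to decay rapidly in $\lambda$ away from $U$, so one can truncate to an $O(L^2)$ collection of effective constraints and the total small-ball cost reduces to $O(L^2)$, giving $\tau=2$. The principal obstacle throughout is the construction of $F$: producing an entire function with the required pointwise lower bound on $U$ and $\FL$-norm polynomial in $L$ demands precise weighted $\bar\partial$-estimates adapted to the doubling geometry encoded in $\rho_L$, and the quantitative dependence of $\tau$ on the doubling constant of $\mu$ is the delicate point that determines the exponent in (a).
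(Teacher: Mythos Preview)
Your upper bound and your overall strategy for the lower bound --- force $f_L$ to be uniformly close to a non-vanishing element of $\FL$ via a Gaussian small-ball event --- are both correct and coincide with the paper's approach. The essential difference, and the gap in your proposal, is the construction of the non-vanishing function.

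Your proposed $\bar\partial$-construction is problematic. First, $\re h=\phi+O(1)$ on a neighbourhood of $\overline U$ gives only $|e^{Lh}|\simeq e^{L\phi+O(L)}$, so your claimed lower bound $|F|\gtrsim e^{L\phi}/\rho_L$ is too strong; you can at best hope for $|F|e^{-L\phi}\gtrsim e^{-C_0L}$, which is in fact what the paper obtains. Second, H\"ormander's standard estimate $\int|u|^2e^{-2\psi}\leq\int|g|^2e^{-2\psi}/\Delta\psi$ requires a pointwise lower bound on $\Delta\psi$, but for a doubling $\mu$ the Laplacian $\Delta\phi_L$ may be singular with respect to Lebesgue measure, so the estimate is not directly available. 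Third --- and this is the real obstruction --- even granting an $L^2$ bound $\|u\|_{L^2(e^{-2L\phi})}\lesssim e^{CL}$, the pointwise bound you get on $U$ via subaveraging is only $|u(z)|e^{-L\phi(z)}\lesssim e^{CL}$, which need not be smaller than $|F_0(z)|e^{-L\phi(z)}\gtrsim e^{-C_0L}$. So non-vanishing of $F=\chi F_0-u$ on $U$ is not guaranteed. The paper sidesteps all of this with an elementary construction: for a small fixed $\delta$ (so that $D\subset D_{\delta\mu}^r(z_0)$ for the $r$ in Proposition~\ref{kernest}(d)) and $N\simeq L/\delta$, set $h_L=k_\delta^N/\|k_\delta^N\|_{\FL}$ where $k_\delta$ is the normalised reproducing kernel of $\mathcal F_{\delta\phi}^2$ at $z_0$. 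Then $|h_L|e^{-L\phi}\geq e^{-C_0L}$ on $D$ directly from the diagonal lower bound on the kernel, and $\|h_L\|_{\FL}=1$, with no $\bar\partial$ at all.

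A further point: in your small-ball computation you attribute the exponent $\tau$ to $\|F\|_{\FL}^2\lesssim L^\tau$. In the paper $\|h_L\|=1$, so $\sum|c_\lambda|^2\lesssim1$ and that factor is harmless; the exponent comes instead from the near/far splitting of $\Lambda_L$. For $\lambda$ outside a disc $D_L=D^{C_1L^\alpha r}(z_0)$ one imposes only the weak constraint $|a_\lambda-c_\lambda|\leq L|\lambda-z_0|/\rho_L(z_0)$, which has probability $\geq1/2$ in total; for the $O(L^{1+\alpha/\gamma})$ near indices one imposes $|a_\lambda-c_\lambda|\lesssim e^{-C_0L}$, giving cost $e^{-cL\cdot L^{1+\alpha/\gamma}}$, whence $\tau=2+\alpha/\gamma$ with $\alpha=\max\{0,\delta^{-1}(\epsilon^{-1}-\gamma)\}$. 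For part~(b), fast $L^2$ decay is not used to localise the dual-frame coefficients of $h_L$ as you suggest, but rather to bound $\sum_{\lambda\in\tilde\Lambda_L}|k_\lambda(z)|^2$ for $\lambda$ in the annulus $D_L\setminus D^{C_3r}(z_0)$ directly by $e^{-CL}$, so the strong small-ball constraint is needed only on the $O(L)$ indices in $D^{C_3r}(z_0)$.
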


\begin{rems}
1. The upper bound in this theorem follows directly from Corollary~\ref{largedev} (b)

\noindent 2. In proving this result we will give upper bounds for the value $\tau$ when we do not have fast ${L}\sp 2$ off-diagonal decay.

\noindent 3. The kernel corresponding to $\phi(z)=|z|^\alpha/2$ has fast ${L}\sp 2$ off-diagonal decay (see the Appendix).
\end{rems}

While we have stressed heretofore that our work generalises the known cases in the complex plane, we should point out that these ideas have also been
studied on manifolds. For example in \cite{SZ99}, \cite{SZ08} and \cite{SZZ} the authors study the distribution of zeros of random holomorphic sections of
(large) powers of a positive holomorphic line bundle $L$ over a compact complex manifold $M$. Theorem~\ref{almostsurebasis}, for example, is completely
analogous to \cite[Theorem 1.1]{SZ99}, although our proof is less technical. We have also used many of the ideas from \cite{SZZ} in our proof of
Theorem~\ref{largedev}, where the authors also deal with the problem of having no information about a basis. A key difference between the two settings is
the compactness of the manifold $M$, which means that the spaces of sections considered are finite dimensional with a control on the growth of the
dimension. There are also some recent results in a non-compact setting, see \cite{DMS}, however the spaces considered are still assumed to be finite
dimensional.

The paper is structured as follows: In Section~2 we give some technical results that shall be used throughout the paper, and show that the covariance
kernel for the GAFs defined via bases and frames satisfy similar size estimates. In Section~3 we prove Theorems~\ref{almostsurebasis} and \ref{variance}.
In Section~4 we show that the smooth linear statistics are asymptotically normal, under some extra regularity assumptions (Theorem~\ref{asympnorm}). In
Section~5 we prove the large deviations estimates, Theorem~\ref{largedevsmooth} and Corollary~\ref{largedev}. In Section~6 we prove
Theorem~\ref{basishole}, the upper bound for the hole probability for the zero set of the GAF defined via bases. Finally in Section~7 we compute the hole
probability for the zero set of the GAF defined via frames, Theorem~\ref{hole}.

The notation $f \lesssim g$ means that there is a constant $C$ independent of the relevant variables such that $f\le C g$, and $f\simeq g$ means that
$f\lesssim g$ and $g\lesssim f$. We frequently ignore events of probability zero.

\section{Doubling Measures and Fock Spaces}
\subsection{Technical Preliminaries}
We will always assume that $\mu$ is a doubling measure (Definition~\ref{doubling}) and that $\phi$ is a subharmonic function with $\mu=\Delta\phi$. Recall
that $\rho(z)=\rho_\mu(z)$ is the radius such that $\mu(D(z,\rho_\mu(z)))=1$. Note that all of the constants (including implicit constants) in this
section depend only on the doubling constant $C_\mu$.

\begin{lemma}{\cite[Lemma 2.1]{Ch}}\label{rho comparison}
Let $\mu$ be a doubling measure in $\mathbb C$. There exists $\gamma>0$ such that for any discs $D, D'$ of respective radius $r(D)>r(D')$ with $D\cap
D'\neq\emptyset$
\[
\left(\frac{\mu(D)}{\mu(D')}\right)^\gamma\lesssim\frac{r(D)}{r(D')} \lesssim \left(\frac{\mu(D)}{\mu(D')}\right)^{1/\gamma} .
\]
\end{lemma}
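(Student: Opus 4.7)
The plan is to prove the two inequalities separately: the left one follows directly from iterating the doubling hypothesis, and the right one requires first establishing a complementary reverse-doubling estimate.

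For the left-hand inequality, let $z,z'$ be the centres and $r>r'$ the radii of $D,D'$. The non-empty intersection forces $|z-z'|\leq r+r'<2r$, so $D\subset D(z',3r)$. Since $D(z',3r)$ and $D'=D(z',r')$ are concentric, iterating the doubling inequality $k$ times with $2^k r'\geq 3r$ (needing $k\lesssim\log_2(r/r')$) gives $\mu(D)\leq\mu(D(z',3r))\lesssim (r/r')^{\log_2 C_\mu}\mu(D')$, which is the first inequality with exponent $\gamma_1=1/\log_2 C_\mu$.

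The right-hand inequality requires a reverse-doubling property: there exists $\alpha=\alpha(C_\mu)\in(0,1)$ with $\mu(D(z,r/2))\leq\alpha\mu(D(z,r))$ for every $z$ and $r$. I would derive this by picking a point $w$ with $|w-z|=3r/4$ and checking the two elementary geometric facts $D(w,r/4)\subset D(z,r)$ and $D(w,r/4)\cap D(z,r/2)=\emptyset$. The inclusion $D(z,r)\subset D(w,2r)$ together with three successive applications of doubling at $w$ produces $\mu(D(w,r/4))\geq C_\mu^{-3}\mu(D(z,r))$, and disjointness inside $D(z,r)$ then forces $\mu(D(z,r/2))\leq(1-C_\mu^{-3})\mu(D(z,r))$, so $\alpha=1-C_\mu^{-3}$ works.

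Once reverse doubling is in hand, iterating it around $z'$ gives $\mu(D(z',r))\gtrsim (r/r')^{\log_2(1/\alpha)}\mu(D')$. Because $D(z',r)\subset D(z,3r)$, one more application of doubling (around $z$) yields $\mu(D(z',r))\lesssim\mu(D)$, and combining produces $\mu(D)/\mu(D')\gtrsim (r/r')^{\log_2(1/\alpha)}$, which is the right-hand inequality with exponent $\gamma_2=\log_2(1/\alpha)$. Taking $\gamma=\min\{\gamma_1,\gamma_2\}$ lets a single $\gamma$ serve both inequalities. The main obstacle is the reverse-doubling step: the doubling hypothesis alone does not obviously produce ``mass away from the centre'', and the crux is constructing the witness disc $D(w,r/4)$ simultaneously inside $D(z,r)$, disjoint from $D(z,r/2)$, and comparable in doubling to $D(z,r)$. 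After that, everything reduces to geometric-series bookkeeping in the exponent.
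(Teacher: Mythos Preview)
The paper does not actually prove this lemma: it is simply quoted from \cite[Lemma~2.1]{Ch} and used as a black box, so there is no in-paper argument to compare against. Your proof is correct and is essentially the standard one---the doubling iteration for the left inequality and the reverse-doubling (``open condition'') lemma for the right inequality, obtained by placing an auxiliary disc in the annulus $D(z,r)\setminus D(z,r/2)$ and using doubling to show it carries a definite fraction of $\mu(D(z,r))$. One tiny remark: when you take $\gamma=\min\{\gamma_1,\gamma_2\}$, the right-hand inequality with exponent $1/\gamma$ does not follow monotonically from the one with $1/\gamma_2$ in the (bounded) regime where $\mu(D)/\mu(D')<1$; but since in that regime both $r/r'$ and $\mu(D)/\mu(D')$ are pinned between positive constants depending only on $C_\mu$, the inequality still holds after adjusting the implicit constant, so the conclusion stands.
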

In particular, the support of $\mu$ has positive Hausdorff dimension. We will sometimes require some further regularity on the measure $\mu$, so we make
the following definition.
\begin{defn}\label{localflat}
We say that a doubling measure $\mu$ is \textit{locally flat} if given any disc $D$ of radius $r(D)$ satisfying $\mu(D)=1$ then for every disc
$D'\subseteq D$ of radius $r(D')$ we have
\[
\frac{1}{\mu(D')}\simeq\left(\frac{r(D)}{r(D')}\right)^2
\]
where the implicit constants depend only on $\mu$.
\end{defn}
Trivially $\phi(z)=|z|^2$ gives us a locally flat measure, indeed the condition $0<c<\Delta\phi<C$ ensures that the measure $\Delta\phi$ is locally flat.
Moreover there is always a regularisation of the measure $\Delta\phi$ that is locally flat (see \cite[Theorem 14]{MMO}), but we shall not pursue this line
of thought.

We have the following estimates from \cite[p. 869]{MMO}: There exist $\eta>0$, $C>0$ and $\beta\in(0,1)$ such that
\begin{equation}\label{rho is poly}
C^{-1}|z|^{-\eta}\leq\rho(z)\leq C|z|^\beta\text{ for }|z|>1
\end{equation}
and
\[
|\rho(z)-\rho(\zeta)|\leq|z-\zeta|\text{ for }z,\zeta\in\mathbb{C}.
\]
Thus $\rho$ is a Lipschitz function, and so in particular is continuous. We will write
\[
D^r(z)=D(z,r\rho(z))
\]
and
\[
D(z)=D^1(z).
\]

A simple consequence of Lemma~\ref{rho comparison} is that $\rho(z)\simeq\rho(\zeta)$ for $\zeta\in D(z)$. We shall make use of the following estimate.
\begin{lemma}[{\cite[p. 205]{Ch}}]\label{Christ}
If $\zeta\not\in D(z)$ then
\[
\frac{\rho(z)}{\rho(\zeta)}\lesssim\left(\frac{|z-\zeta|}{\rho(\zeta)}\right)^{1-t}
\]
for some $t\in(0,1)$ depending only on the doubling constant, $C_\mu$.
\end{lemma}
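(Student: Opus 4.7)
The plan is to split into cases based on which of $\rho(z)$ and $\rho(\zeta)$ is larger, and to handle the substantive case by applying Lemma~\ref{rho comparison} twice to a common ``bridging'' disc centred at $\zeta$.

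If $\rho(z) \leq \rho(\zeta)$ the inequality is elementary. In the subcase $|z-\zeta| \geq \rho(\zeta)$, the target quantity $(|z-\zeta|/\rho(\zeta))^{1-t}$ is already at least $1$, while $\rho(z)/\rho(\zeta) \leq 1$. In the subcase $|z-\zeta| < \rho(\zeta)$, the hypothesis $\zeta \notin D(z)$ forces $\rho(z) \leq |z-\zeta|$, so $\rho(z)/\rho(\zeta) \leq |z-\zeta|/\rho(\zeta) < 1$, and since $1-t < 1$, raising a number less than $1$ to the power $1-t$ only enlarges it.

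The substantive case is $\rho(z) > \rho(\zeta)$, in which $|z-\zeta| \geq \rho(z) > \rho(\zeta)$. Set $\tilde D = D(\zeta, 2|z-\zeta|)$. Then $\tilde D$ overlaps both $D(z,\rho(z))$ (the two discs share the point $z$) and $D(\zeta, \rho(\zeta))$, and $\tilde D$ has strictly larger radius than either of them. Using $\mu(D(z,\rho(z))) = \mu(D(\zeta,\rho(\zeta))) = 1$, Lemma~\ref{rho comparison} applied to the pair $(\tilde D, D(z,\rho(z)))$ delivers the upper bound
\[
\mu(\tilde D) \lesssim \left(\frac{|z-\zeta|}{\rho(z)}\right)^{1/\gamma},
\]
while applied to $(\tilde D, D(\zeta,\rho(\zeta)))$ it delivers the lower bound
\[
\mu(\tilde D) \gtrsim \left(\frac{|z-\zeta|}{\rho(\zeta)}\right)^{\gamma}.
\]
Chaining the two estimates and rearranging yields $\rho(z)/\rho(\zeta) \lesssim (|z-\zeta|/\rho(\zeta))^{1-\gamma^2}$, which is the claim with $t = \gamma^2 \in (0,1)$.

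The only genuine subtlety is keeping track of directions: Lemma~\ref{rho comparison} supplies both a polynomial upper and a polynomial lower comparison between radii and measure ratios, and the argument needs one direction from each of the two applications (the upper bound on $\mu(\tilde D)$ in terms of $\rho(z)$, and the lower bound in terms of $\rho(\zeta)$). Once the pairing is chosen correctly, the remainder is purely algebraic.
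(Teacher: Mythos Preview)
Your argument is correct. The case split is clean, the trivial case $\rho(z)\leq\rho(\zeta)$ is handled properly in both subcases, and in the substantive case your bridging disc $\tilde D=D(\zeta,2|z-\zeta|)$ together with the two opposite-direction applications of Lemma~\ref{rho comparison} gives exactly the chain
\[
\Big(\frac{|z-\zeta|}{\rho(\zeta)}\Big)^{\gamma}\lesssim\mu(\tilde D)\lesssim\Big(\frac{|z-\zeta|}{\rho(z)}\Big)^{1/\gamma},
\]
from which the claimed bound with $t=\gamma^2$ follows by elementary algebra. One tiny point: you should note (or arrange) that $\gamma<1$ so that $t=\gamma^2\in(0,1)$; the paper later takes $\gamma\le1$, and in the doubling setting one can always shrink $\gamma$ without loss, so this is harmless.

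As for comparison with the paper: there is nothing to compare. The paper does not prove this lemma at all; it is simply quoted from Christ \cite[p.~205]{Ch}. Your proof is the natural self-contained derivation from Lemma~\ref{rho comparison}, and is exactly the kind of argument one finds in Christ's paper for this estimate.
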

We will need the following estimate.
\begin{lemma}{\cite[Lemma 2.3]{Ch}}\label{Green}
There exists $C>0$ depending on $C_\mu$ such that for any $r>0$
\[
\int_{D(z,r)}\log\Big( \frac{2r}{|z-\zeta|}\Big) d\mu(\zeta)\leq C\ \mu(D(z,r))\quad\quad z\in\C.
\]
\end{lemma}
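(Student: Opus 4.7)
The plan is to combine a layer-cake representation of the integral with the ``reverse doubling'' estimate already packaged into Lemma~\ref{rho comparison}. First I would note that the integrand $f(\zeta)=\log(2r/|z-\zeta|)$ is nonnegative on $D(z,r)$, and infinite only at $\zeta=z$, which carries no $\mu$-mass by Lemma~\ref{rho comparison}. Writing
\[
\int_{D(z,r)} f\, d\mu = \int_0^\infty \mu\bigl(\{\zeta\in D(z,r): f(\zeta)>t\}\bigr)\, dt,
\]
and observing that $\{f>t\}=\{|z-\zeta|<2re^{-t}\}$ contains $D(z,r)$ for $t\le\log 2$ and equals the concentric disc $D(z,2re^{-t})$ otherwise, the right-hand side becomes
\[
(\log 2)\,\mu(D(z,r))+\int_{\log 2}^{\infty}\mu\bigl(D(z,2re^{-t})\bigr)\,dt,
\]
and the substitution $s=2re^{-t}$ turns the second term into $\int_0^r\mu(D(z,s))\,ds/s$.

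The next step is to bound this remaining integral by $\mu(D(z,r))$ up to a multiplicative constant depending only on $C_\mu$. For this I will apply the right-hand inequality of Lemma~\ref{rho comparison} to the concentric pair $D(z,r)\supset D(z,s)$, which yields
\[
\frac{r}{s}\lesssim\Bigl(\frac{\mu(D(z,r))}{\mu(D(z,s))}\Bigr)^{1/\gamma}, \qquad \text{i.e.,}\qquad \mu(D(z,s))\lesssim\Bigl(\frac{s}{r}\Bigr)^\gamma \mu(D(z,r))
\]
for some $\gamma>0$ depending only on $C_\mu$. Substituting and integrating,
\[
\int_0^r\mu(D(z,s))\,\frac{ds}{s}\lesssim\frac{\mu(D(z,r))}{r^\gamma}\int_0^r s^{\gamma-1}\,ds=\frac{\mu(D(z,r))}{\gamma},
\]
and summing with the $(\log 2)\mu(D(z,r))$ term completes the proof.

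There is no real obstacle here; the one conceptual point worth stressing is that the \emph{upper} doubling bound $\mu(D(z,s))\le\mu(D(z,r))$ is too weak to tame the logarithmic singularity, so one really needs the reverse polynomial bound $\mu(D(z,s))\lesssim(s/r)^{\gamma}\mu(D(z,r))$. This reverse bound is exactly what prevents $\mu$ from having atoms and is what forces the support of $\mu$ to have positive Hausdorff dimension (as already remarked after Lemma~\ref{rho comparison}). Once this is recognized, the proof is a one-line change of variables followed by a power-type integration.
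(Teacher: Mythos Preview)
Your argument is correct. The paper itself does not supply a proof of this lemma; it simply quotes it as \cite[Lemma~2.3]{Ch}. The layer-cake decomposition together with the reverse-doubling bound $\mu(D(z,s))\lesssim(s/r)^{\gamma}\mu(D(z,r))$ extracted from Lemma~\ref{rho comparison} is the standard route to this estimate and is essentially what Christ does in the cited reference.
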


We recall that $\rho^{-2}$ can be seen as a regularisation of $\mu$. We define $d_\mu$ to be the distance induced by the metric $\rho(z)^{-2}dz\otimes
d\overline{z}$, that is
\[
d_\mu(z,\zeta)=\inf \int_0^1 |\gamma'(t)| \rho^{-1}(\gamma(t)) dt ,
\]
where the infimum is taken over all piecewise $\mathcal C^1$ curves $\gamma:[0,1]\rightarrow\C$ with $\gamma(0)=z$ and $\gamma(1)=\zeta$. We have the
following estimates:
\begin{lemma}[{\cite[Lemma 4]{MMO}}]\label{distance}
There exists $\delta>0$ such that for every $r>0$ there exists $C_r>0$ such that
\begin{itemize}
\item $\displaystyle{C_r^{-1}\frac{|z-\zeta|}{\rho(z)}\leq d_\mu(z,\zeta)\leq C_r\frac{|z-\zeta|}{\rho(z)}}\text{ if }|z-\zeta|\leq r\rho(z)$ and
\item $\displaystyle{C_r^{-1}\left(\frac{|z-\zeta|}{\rho(z)}\right)^\delta\leq d_\mu(z,\zeta)\leq C_r\left(\frac{|z-\zeta|}{\rho(z)}\right)^{2-\delta}}\text{ if }|z-\zeta|> r\rho(z)$.
\end{itemize}
\end{lemma}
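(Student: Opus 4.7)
My plan is to split the argument by whether $|z-\zeta|$ is comparable to or much larger than $\rho(z)$, and in each regime handle the upper and lower bounds on $d_\mu$ separately. In the near case $|z-\zeta|\le r\rho(z)$ the $1$-Lipschitz property of $\rho$ yields $\rho(w)\simeq\rho(z)$ on the Euclidean disc $D(z,2r\rho(z))$, with constants depending on $r$. The upper bound on $d_\mu$ is then realised by the straight segment from $z$ to $\zeta$, on which $1/\rho\simeq 1/\rho(z)$; the lower bound follows because any competing curve either stays inside the disc (so has Euclidean length $\ge|z-\zeta|$ with $1/\rho\simeq 1/\rho(z)$) or exits it (in which case its length already dominates $\rho(z)\gtrsim|z-\zeta|/r$).

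In the far case $|z-\zeta|>r\rho(z)$ the key input is Lemma~\ref{Christ}, which applied once in each direction yields the two-sided sandwich
\[
\rho(z)^{1/t}\,|w-z|^{-(1-t)/t}\;\lesssim\;\rho(w)\;\lesssim\;\rho(z)^{t}\,|w-z|^{1-t}
\]
valid whenever $w\notin D(z)$ and $z\notin D(w)$. For the lower bound on $d_\mu$ I apply the upper estimate on $\rho$ to the integrand along an arbitrary curve $\gamma$ from $z$ to $\zeta$. Setting $u(s)=|\gamma(s)-z|$ and using $|u'(s)|\le|\gamma'(s)|$, the portion of $\gamma$ outside $D(z)$ satisfies
\[
\int\frac{|\gamma'|}{\rho(\gamma)}\,ds\;\gtrsim\;\rho(z)^{-t}\int|u'|\,u^{t-1}\,ds\;\ge\;\rho(z)^{-t}\int_{0}^{|z-\zeta|}v^{t-1}\,dv\;\simeq\;\Bigl(\frac{|z-\zeta|}{\rho(z)}\Bigr)^{t},
\]
the middle inequality holding because the continuous function $u$ runs from $0$ to $|z-\zeta|$ and so covers the whole interval in its image. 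The part of $\gamma$ inside $D(z)$ contributes at most an additive constant, which is absorbed into the bound.

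For the upper bound I would first try the straight segment, on which the lower estimate on $\rho$ gives $d_\mu(z,\zeta)\lesssim(|z-\zeta|/\rho(z))^{1/t}$. If the exponent $1/t$ exceeds $2-\delta$, I would sharpen the bound by replacing the segment with a piecewise-linear curve $z_0=z,z_1,\dots,z_N=\zeta$ whose successive edges have Euclidean size comparable to $\rho(z_j)$, so that each edge contributes $O(1)$ to $d_\mu$ while $N$ is controlled via the at-most-geometric growth of $\rho$ permitted by the Lipschitz property. Choosing $\delta>0$ small enough that both $\delta\le t$ and $2-\delta\ge 1/t$ (possibly after this path refinement) then delivers the stated estimates. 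The main obstacle is precisely this matching of exponents: Lemma~\ref{Christ} naturally produces the power $t\in(0,1)$ for the lower bound and its reciprocal $1/t$ for the straight-segment upper bound, and reconciling the two with a single $\delta$ is what forces the path-construction step above.
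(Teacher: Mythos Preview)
The paper does not prove this lemma; it is quoted from \cite[Lemma~4]{MMO} without argument, so there is no in-paper proof to compare against.

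On the merits of your sketch: the near case and the far-case lower bound are essentially correct. The Lipschitz property of $\rho$ handles the near case exactly as you describe, and for the far-case lower bound the combination of Lemma~\ref{Christ} with the substitution $u(s)=|\gamma(s)-z|$ is the standard route. One small slip: since you integrate only over the portion of $\gamma$ outside $D(z)$, the lower limit of your $v$-integral should be $\rho(z)$ rather than $0$; this is harmless because $|z-\zeta|>r\rho(z)$ makes $|z-\zeta|^t-\rho(z)^t\simeq|z-\zeta|^t$.

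The far-case upper bound is where you correctly sense trouble, and the trouble is real: the inequality $d_\mu(z,\zeta)\le C_r\bigl(|z-\zeta|/\rho(z)\bigr)^{2-\delta}$ with $\delta>0$ is false in general. Take $\phi(z)=|z|^\alpha$ with $\alpha>4$. Then $\rho(w)\simeq|w|^{1-\alpha/2}$ for $|w|\ge1$ (cf.\ the Appendix), and since $\rho$ is radial the straight segment from $0$ to a large $\zeta$ is optimal, giving
\[
d_\mu(0,\zeta)\simeq\int_1^{|\zeta|}s^{\alpha/2-1}\,ds\simeq|\zeta|^{\alpha/2},
\]
whereas $\bigl(|\zeta|/\rho(0)\bigr)^{2-\delta}\simeq|\zeta|^{2-\delta}<|\zeta|^2$. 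Your path-refinement idea cannot rescue this: the Lipschitz bound $|\rho(z_{j+1})-\rho(z_j)|\le|z_{j+1}-z_j|$ caps the \emph{growth} of $\rho$ along the chain but says nothing about shrinkage, and in this example $\rho$ genuinely collapses along every path from $0$ to $\zeta$. The exponent $1/t$ that your straight-segment computation produces is in fact the right order of upper bound; the printed exponent $2-\delta$ should presumably be some $M=M(C_\mu)$ (or $1/\delta$). Since the paper only ever invokes the \emph{lower} bound of Lemma~\ref{distance} (in Lemma~\ref{exp epsilon} and in the proof of Theorem~\ref{variance}), this discrepancy has no effect on any of the paper's results.
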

%
%Given $z\in\C$ and $r>0$ we write
%\[
%B(z,r)=\{\zeta\in\C : d_\phi(z,\zeta)<r\}.
%\]
%for the ball in the metric $d_\phi$

\begin{defn} A sequence $\Lambda$ is $\rho$-\emph{separated} if there exists $\delta>0$ such that
\[
\inf_{\lambda\neq\lambda'}d_\mu(\lambda,\lambda')>\delta.
\]
\end{defn}

One consequence of Lemma~\ref{distance} is that a sequence $\Lambda$ is $\rho$-separated if and only if there exists $\delta>0$ such that
\[
|\lambda-\lambda'|\geq\delta\max(\rho(\lambda),\rho(\lambda^\prime)) \qquad \lambda\neq \lambda'.
\]
This equivalent condition is often easier to work with and is the reason we call this condition $\rho$-separated. We shall make repeated use of the
following lemma.
\begin{lemma}\label{exp epsilon}
Let $\Lambda$ be a $\rho$-separated sequence. Then for any $\epsilon>0$ and $k\ge 0$ there exists a constant $C>0$ depending only on $k$, $\epsilon$, and
$C_\mu$ such that
\begin{alist}
\item$\displaystyle{\int_\C \frac{|z-\zeta|^k}{\exp d_\mu^\epsilon(z,\zeta)} \frac{dm(z)}{\rho(z)^2}}\le C\rho^k(\zeta)$ and
\item$\displaystyle{\sum_\lil \frac{|z-\lambda|^k}{\exp d_\mu^\epsilon(z,\lambda)} }\le C\rho^k(\zeta)$.
\end{alist}
\end{lemma}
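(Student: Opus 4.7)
The plan is to prove (a) via a dyadic decomposition of $\C$ into annuli centred at $\zeta$, and then to deduce (b) from (a) by using the $\rho$-separation of $\Lambda$ to replace each term in the sum by an integral over a disc of size $\rho(\lambda)$. (As written, the right-hand side of (b) contains $\rho(\zeta)$ although $\zeta$ does not appear on the left; we read this as a typo and aim for the bound $\lesssim \rho(z)^k$.)

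For (a), set $A_0 = D(\zeta)$ and $A_j = \{z : 2^{j-1}\rho(\zeta) < |z-\zeta| \leq 2^j\rho(\zeta)\}$ for $j\geq 1$. On $A_0$ one has $|z-\zeta|^k\leq \rho(\zeta)^k$, $\rho(z)\simeq \rho(\zeta)$ and $\exp d_\mu^\epsilon(z,\zeta)\geq 1$, so this piece contributes $\lesssim \rho(\zeta)^k$. On $A_j$ with $j\geq 1$, Lemma~\ref{distance} (used symmetrically, with $\rho(\zeta)$ as base length) gives $d_\mu(z,\zeta)\gtrsim 2^{j\delta}$, hence $\exp d_\mu^\epsilon(z,\zeta)\gtrsim \exp(c\,2^{j\delta\epsilon})$, while $|z-\zeta|^k\lesssim 2^{jk}\rho(\zeta)^k$. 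The remaining factor $\int_{A_j} dm(z)/\rho(z)^2$ is comparable to $\mu(A_j)$: cover $A_j$ by the natural discs $D(w)$ (each of $\mu$-mass $1$, with $\int_{D(w)} dm/\rho^2\simeq 1$) with bounded overlap. Lemma~\ref{rho comparison} applied to $D(\zeta,2^j\rho(\zeta))$ and $D(\zeta,\rho(\zeta))$ yields $\mu(D(\zeta,2^j\rho(\zeta)))\lesssim 2^{j/\gamma}$, so $\int_{A_j} dm/\rho^2\lesssim 2^{j/\gamma}$. The resulting geometric/exponential series
\[
\sum_{j\geq 1} \rho(\zeta)^k\, 2^{jk}\, 2^{j/\gamma}\, \exp\!\bigl(-c\,2^{j\delta\epsilon}\bigr)
\]
converges and is $O(\rho(\zeta)^k)$.

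For (b), $\rho$-separation provides $\tau>0$ such that the discs $B_\lambda = D(\lambda,\tau\rho(\lambda))$ are pairwise disjoint. For $w\in B_\lambda$, Lipschitz continuity of $\rho$ gives $\rho(w)\simeq \rho(\lambda)$ and $|z-\lambda|^k\lesssim |z-w|^k + \rho(\lambda)^k$, while the triangle inequality for $d_\mu$ together with Lemma~\ref{distance} (applied to $w,\lambda$ in the near regime) yields $d_\mu(z,w)\geq d_\mu(z,\lambda) - C$. Averaging the integrand over $B_\lambda$ therefore gives
\[
\frac{|z-\lambda|^k}{\exp d_\mu^\epsilon(z,\lambda)} \lesssim \int_{B_\lambda} \frac{|z-w|^k + \rho(w)^k}{\exp(c'\,d_\mu^\epsilon(z,w))}\,\frac{dm(w)}{\rho(w)^2}.
\]
Summing over the disjoint $B_\lambda$ bounds the total by the same integrand over all of $\C$; bounding $\rho(w)^k\lesssim \rho(z)^k + |z-w|^k$ (again by the $1$-Lipschitz property) reduces the problem to two applications of (a) with centre $z$ and exponent $c'\epsilon$, which together yield $\lesssim \rho(z)^k$.

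The main technical point is the mass estimate $\int_{A_j} dm/\rho^2\lesssim 2^{j/\gamma}$, which rests on the comparability between $\rho^{-2}dm$ and $\mu$ at the natural scale $\rho$. Once this is in hand, the exponential decay provided by Lemma~\ref{distance} easily absorbs all polynomial factors coming from $|z-\zeta|^k$ and from the doubling dimension of $\mu$, and the rest of the argument reduces to summing geometric series and exploiting the disjointness produced by $\rho$-separation.
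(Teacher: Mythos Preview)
Your argument is correct and follows the same overall strategy as the paper: control the mass $\int_{D^r(\zeta)} dm/\rho^2$ polynomially in $r$ and let the exponential decay coming from Lemma~\ref{distance} absorb all polynomial factors; then pass from the sum to the integral using the disjoint discs provided by $\rho$-separation. The execution differs slightly. For (a) the paper writes $|z-\zeta|^k e^{-(|z-\zeta|/\rho(\zeta))^\varepsilon}=\rho(\zeta)^k\int_{(|z-\zeta|/\rho(\zeta))^\varepsilon}^\infty e^{-x}f(x)\,dx$ for a suitable $f$, swaps the order of integration, and bounds $\int_{D^{x^{1/\varepsilon}}(\zeta)} dm/\rho^2$ using the pointwise estimate of Lemma~\ref{Christ} on $\rho(z)^{-2}$; this replaces your dyadic annuli and your covering argument for the mass (both routes land on the same polynomial bound). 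For (b) the paper is terse --- it simply says the sum is dominated by the integral in (a) --- which is precisely your disjoint-discs averaging made explicit. One small remark: when you invoke ``(a) with exponent $c'\epsilon$'' at the end, what you really need is (a) with $\exp(c'\,d_\mu^\epsilon)$ in the denominator for a fixed constant $c'>0$; this is immediate from your own proof of (a), since the series there converges for any positive constant in the exponential.
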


\begin{proof}
The proof of (a) is  almost identical to \cite[Lemma 2.7]{MO}. Lemma~\ref{distance} implies that there exists $\varepsilon>0$ such that
\[
\exp d_\mu^\epsilon(z,\zeta)\gtrsim\exp\left(\frac{|z-\zeta|}{\rho(\zeta )}\right)^{\varepsilon}.
\]
Let $f(x)=x^{\frac{k}{\varepsilon}}-\frac{k}{\varepsilon}x^{\frac{k}{\varepsilon}-1}$ and note that for any $y>0$
\[
\int_y^{+\infty} e^{-x}f(x)=e^{-y}y^{k/\varepsilon}.
\]
Splitting the integral over the regions $D(\zeta)$ and $\C\setminus D(\zeta)$ and using Lemma~\ref{Christ} we see that
\begin{align*}
\iC\frac{|z-\zeta|^k}{\exp d_\mu^\epsilon(z,\zeta)}\frac{dm(z)}{\rho(z)^2}&\lesssim\rho^k(\zeta)+\int_{\C\setminus
D(\zeta)}\rho^k(\zeta)\int_{\left(\frac{|z-\zeta|}{\rho(\zeta)}\right)^\varepsilon}^\infty e^{-x}f(x)dx\frac{dm(z)}{\rho(z)^2}\\
&\lesssim\rho^k(\zeta)+\rho^k(\zeta)\int_{1}^{+\infty}e^{-x}f(x)\int_{D^{x^{1/\varepsilon}}(\zeta)}\frac{dm(z)}{\rho(z)^2}dx\\
&\lesssim\rho^k(\zeta )\left(1+\int_{1}^{+\infty} e^{-x}f(x)x^{\alpha}dx\right)
\end{align*}
for some positive $\alpha$.

We may estimate the sum appearing in (b) by the integral in (a) so the result follows.
\end{proof}

We will also need the following estimates.
\begin{lemma}{\cite[Lemma 19]{MMO}}\label{bernstein}
For any $r>0$ there exists $C=C(r)>0$ such that for any $f\in \mathcal{H}(\C)$ and $z\in \C:$
\begin{alist}
\item $\displaystyle{|f(z)|^2 e^{-2\phi(z)}\le C\int_{D^r(z)}|f(\zeta)|^2 e^{-2\phi(\zeta)}\frac{dm(\zeta)}{\rho^2(\zeta)}}$.
\item $\displaystyle{|\nabla (|f|e^{-\phi})(z)|^2\le \frac{C}{\rho^2(z)}\int_{D^r(z)}|f(\zeta)|^2 e^{-2\phi(\zeta)}\frac{dm(\zeta)}{\rho^2 (\zeta)}}$.
\item If $s>r$, $\displaystyle{|f(z)|^2 e^{-2\phi(z)} \le C_{r,s} \int_{D^s(z)\setminus D^r(z)}|f(\zeta)|^2 e^{-2\phi(\zeta)}\frac{dm(\zeta)}{\rho^2(\zeta)}}$.
\end{alist}
\end{lemma}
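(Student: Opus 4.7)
All three estimates are weighted sub-mean-value inequalities for $|f|^2 e^{-2\phi}$, and I would prove them by a common local linearization of the weight. Fix $z\in\C$ and split $\mu = \mu_{\mathrm{loc}} + \mu_{\mathrm{far}}$ with $\mu_{\mathrm{loc}} = \mu|_{D^{2r}(z)}$; correspondingly $\phi = \phi_{\mathrm{loc}} + H$ on $D^{2r}(z)$, where $\phi_{\mathrm{loc}}(\zeta) = \frac{1}{2\pi}\int \log|\zeta - w|\,d\mu_{\mathrm{loc}}(w)$ and $H$ is harmonic. Since $D^{2r}(z)$ is simply connected, $H = \re g$ for some holomorphic $g$, so $F := f e^{-g}$ is holomorphic with $|F|^2 = |f|^2 e^{-2H} = |f|^2 e^{-2\phi}\, e^{2\phi_{\mathrm{loc}}}$. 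The strategy is to prove each of (a)--(c) for the holomorphic function $F$ by classical means and transfer the result back to $|f|e^{-\phi}$ via uniform control of $\phi_{\mathrm{loc}}$.

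That control is the key technical input. I claim that, after subtracting the $\zeta$-independent constant $c(z) := \frac{1}{2\pi}\mu(D^{2r}(z)) \log\rho(z)$, the function $\phi_{\mathrm{loc}}(\zeta) - c(z)$ is uniformly bounded on $D^r(z)$ with bounds depending only on $r$ and $C_\mu$. The upper bound is immediate since $|\zeta - w|/\rho(z) \leq 3r$ for $\zeta\in D^r(z)$, $w\in D^{2r}(z)$, and $\mu(D^{2r}(z)) \lesssim 1$ by doubling together with the normalization $\mu(D(z))=1$. The matching lower bound is exactly what Lemma~\ref{Green} provides, applied at centre $\zeta$ with radius $\simeq r\rho(\zeta)\simeq r\rho(z)$ (which covers $D^{2r}(z)$). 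With this in hand, part (a) follows by applying the sub-mean-value inequality to the subharmonic function $|F|^2$ on $D^r(z)$, rewriting $|F|^2 = |f|^2 e^{-2\phi} e^{2\phi_{\mathrm{loc}}}$, cancelling the $z$-dependent factor $e^{2c(z)}$ from both sides, and using the boundedness of $\phi_{\mathrm{loc}}(\cdot) - c(z)$ to replace the remaining exponentials by constants; the factor $\rho(z)^{-2}$ then becomes the required $\rho(\zeta)^{-2}$ inside the integral via $\rho(\zeta)\simeq\rho(z)$ on $D^r(z)$. Part (c) is proved identically, but using the annular sub-mean-value inequality for $|F|^2$ obtained by integrating the circular mean-value property in the radial direction over $[r\rho(z), s\rho(z)]$.

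For part (b) I would combine (a) with Cauchy's derivative estimate for the holomorphic function $F$: on a disc of radius $\tfrac{r}{2}\rho(z)$ about $z$ we have $|F'(z)| \lesssim \rho(z)^{-1} \sup_{D^{r/2}(z)} |F|$, and the sup is in turn controlled by the $L^2$ integral via (a) on a slightly larger disc. Since $|\nabla|F|| \le |F'|$ almost everywhere this controls $\nabla|F|$, and the identity $|f|e^{-\phi} = |F|\, e^{-(\phi_{\mathrm{loc}} - c(z))}\, e^{-c(z)}$ transfers the bound to $\nabla(|f|e^{-\phi})$. The main obstacle here, and the reason (b) is more delicate than (a) and (c), is the term $|F|e^{-\phi_{\mathrm{loc}}}\nabla\phi_{\mathrm{loc}}$ that arises from the product rule: $\nabla\phi_{\mathrm{loc}}$ has logarithmic singularities on the support of $\mu_{\mathrm{loc}}$ and cannot be bounded pointwise in a naive way. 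The clean resolution is to prove (b) first for smooth regularizations of $\phi$, where $|\nabla\phi_{\mathrm{loc}}(z)|\lesssim \rho(z)^{-1}$ follows from a shell decomposition of $\mu_{\mathrm{loc}}$ using the doubling growth bound $\mu(D(z,s)) \lesssim (s/\rho(z))^\gamma$ for $s\leq \rho(z)$, and then to pass to the limit, using that all implicit constants depend only on $C_\mu$. Equivalently, one can interpret $|\nabla(|f|e^{-\phi})|$ as a limit of finite differences and apply part (a) at the two endpoints, thereby bypassing $\nabla\phi_{\mathrm{loc}}$ entirely.
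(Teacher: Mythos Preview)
The paper does not prove this lemma; it is quoted from \cite[Lemma~19]{MMO}, so there is no in-paper argument to compare against. Your approach to (a) and (c)---extract the harmonic part $H=\re g$, reduce to the sub-mean-value inequality for the holomorphic $F=fe^{-g}$, and use Lemma~\ref{Green} to get the lower bound on $\phi_{\mathrm{loc}}-c(z)$---is the standard one and is correct.

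Part (b), however, has a genuine gap. The shell decomposition you invoke gives
\[
\int_{D^{2r}(z)}\frac{d\mu(w)}{|z-w|}\ \lesssim\ \frac{1}{\rho(z)}+\int_0^{\rho(z)}\frac{\mu(D(z,s))}{s^{2}}\,ds\ \lesssim\ \frac{1}{\rho(z)}+\rho(z)^{-\gamma}\int_0^{\rho(z)}s^{\gamma-2}\,ds,
\]
and the last integral diverges whenever $\gamma\le 1$. Doubling only yields some $\gamma>0$, and in fact the $\gamma$ of Lemma~\ref{rho comparison} satisfies $\gamma\le 1$ by construction, so the pointwise bound $|\nabla\phi_{\mathrm{loc}}(z)|\lesssim\rho(z)^{-1}$ is simply not available for general $\mu$. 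Smoothing $\phi$ and ``passing to the limit'' does not help: the constant in the shell estimate depends on the smoothing scale and blows up as the smoothing is removed, so there is nothing uniform in $C_\mu$ to pass to the limit with. Your second suggestion---interpreting the gradient as a finite difference and ``applying (a) at the two endpoints''---also fails as stated: part (a) bounds each of $|f(z)|e^{-\phi(z)}$ and $|f(w)|e^{-\phi(w)}$ by the \emph{same} integral, so it gives no control on their difference.

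The actual mechanism in \cite{MMO} is a replacement rather than a limit: their Theorem~14 furnishes a subharmonic $\tilde\phi$ with $|\phi-\tilde\phi|\le C$ and $\Delta\tilde\phi\simeq\rho^{-2}$. For $\tilde\phi$ the local growth exponent is effectively $2$, the shell integral converges, and your argument for (b) goes through with $\tilde\phi$ in place of $\phi$. Since $e^{-\phi}\simeq e^{-\tilde\phi}$, the right-hand sides of (a)--(c) are unchanged up to constants, and the only use of (b) in this paper (the Lipschitz bound in the proof of Proposition~\ref{covkerest}(d)) is insensitive to replacing $\phi$ by $\tilde\phi$ on the left-hand side as well.
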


%We will also need the following projection of the measure $\mu$.
%\begin{lemma}[{\cite[Lemma 12]{MMO}}]\label{nu_z}
%For every $z\in\mathbb C$ define the measure $\nu_z$ on $\mathbb R^+$ by
%\[
%\nu_z(A)=\mu(\{\zeta=z+re^{i\theta} : r\in A\})\qquad A\subset\mathbb R^+ .
%\]
%Then $\nu_z$ is doubling and there exists $K$ independent of $z$ such that $C_{\nu_z}\leq K C_\mu$.
%\end{lemma}

We shall scale the measure $\mu$ by a (large) parameter $L\geq1$. We shall write $\phi_L=L\phi$, $\rho_L=\rho_{L\mu}$, $d_L=d_{L\mu}$ and
$D_L^r(z)=D(z,r\rho_L(z))$% and $B_L(z,r)$ for the ball of radius $r$ centred at $z$ in the metric $d_L$
. Note that the measures $\mu$ and $L\mu$ have the same doubling constant, so we may apply all of the results in this section to the measure $L\mu$
without changing the constants. It is clear from the definition
\[
L\mu(D(z,\rho_L(z)))=1
\]
that $\rho_L(z)<\rho(z)$ for $L>1$. Thus by Lemma~\ref{rho comparison} we have
\begin{equation}\label{rhoL}
L^\gamma\lesssim\frac{\rho(z)}{\rho_L(z)}\lesssim L^{1/\gamma}
\end{equation}
and
\[
L^{-1/\gamma}\lesssim\frac{d(z,w)}{d_L(z,w)}\lesssim L^{-\gamma}
\]
for some $\gamma\leq1$, where the implicit constants are uniform in $z$.

If the measure $\mu$ is locally flat then we see that
\begin{equation}\label{rhoLideal}
\frac{\rho(z)}{\rho_L(z)}\simeq\sqrt{L}
\end{equation}
and
\[
\frac{d(z,w)}{d_L(z,w)}\simeq\frac{1}{\sqrt{L}}.
\]

\subsection{Kernel estimates}\label{kernels}
In this section we show that the covariance kernel $K_L$ for the GAF defined via frames \eqref{framedefn} satisfies similar growth estimates to the
reproducing kernel $\K_L$, which is the covariance kernel for the GAF defined via bases \eqref{basisdefn}. We will do this by showing that it is the
reproducing kernel for a different (but equivalent) norm on the space $\FL$. We shall state and prove these results for the function $K=K_1$, but since
the constants appearing depend only on the doubling constant, they may be applied \textit{mutatis mutandis} to $K_L$.

We first note that $\K$ satisfies the following estimates.

\begin{prop}[{\cite[Lemma 21]{MMO},\cite[Theorem 1.1 and Proposition 2.11]{MO},\cite[p. 355]{CO}}]\label{kernest}
There exist positive constants $C$ and $\epsilon$ (depending only on the doubling constant for $\mu$) such that for any $z,w\in\C$
\begin{alist}
\item $|\K(z,w)|\leq C e^{\phi(z)+\phi(w)}e^{-d_\mu^\epsilon(z,w)}$,
\item $C^{-1}e^{2\phi(z)}\leq\K(z,z)\leq Ce^{2\phi(z)}$,
\item $C^{-1}/\rho(z)^2\leq\Delta\log\K(z,z)\leq C/\rho(z)^2.$
\item There exists $r>0$ such that $|\K(z,w)|\geq C e^{\phi(z)+\phi(w)}$ for all $w\in D^r(z)$.
\end{alist}
\end{prop}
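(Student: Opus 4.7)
\emph{Proof plan.} I would prove the four parts in the order (b), (d), (a), (c), since each one feeds into the next. For (b) upper, apply Lemma~\ref{bernstein}(a) to $\zeta\mapsto\K(\zeta,z)$: the reproducing property gives $\|\K(\cdot,z)\|_{\Fdosa}^{2}=\K(z,z)$, so Bernstein yields $\K(z,z)^{2}e^{-2\phi(z)}\lesssim\K(z,z)$, hence $\K(z,z)\lesssim e^{2\phi(z)}$. For (b) lower, use the extremal characterisation $\K(z,z)=\sup\{|f(z)|^{2}:\|f\|_{\Fdosa}\le 1\}$ and build a peak function by H\"ormander's $\bar\partial$-technique: take a smooth cutoff $\chi$ supported in $D(z)$ with $|\bar\partial\chi|\lesssim 1/\rho(z)$ and solve $\bar\partial u=e^{\phi(z)}\bar\partial\chi$ with weight $2\phi(\zeta)+2\log|\zeta-z|$; the singularity forces $u(z)=0$ while H\"ormander's $L^{2}$-estimate gives $\|u\|_{\Fdosa}\lesssim 1$, so $f=e^{\phi(z)}\chi-u$ is entire with $f(z)=e^{\phi(z)}$ and $\|f\|_{\Fdosa}\lesssim 1$.

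Part (d) is a short perturbation from (b): on a sufficiently small $D^{r}(z)$ the oscillation of $\phi$ is $O(1)$, so $e^{\phi(z)+\phi(w)}\simeq e^{2\phi(z)}\simeq\K(z,z)$; Lemma~\ref{bernstein}(b) applied to $\K(\cdot,z)$ bounds the gradient of $|\K(\cdot,z)|e^{-\phi(\cdot)}$ by $\K(z,z)^{1/2}/\rho(z)$, which keeps the ratio $|\K(z,w)|/e^{\phi(z)+\phi(w)}$ within a fixed positive fraction of its diagonal value throughout a small $D^{r}(z)$.

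The main obstacle is (a), since without translation invariance the Gaussian-type decay cannot be extracted by Fourier or convolution methods. My strategy is to repeat the H\"ormander peak-function construction from (b), but with the enhanced weight $\Psi_{z}(\zeta):=2\phi(\zeta)+\eta\,d_{\mu}^{\epsilon}(\zeta,z)$. Lemma~\ref{distance} ensures that for $\epsilon$ small enough (depending only on $C_{\mu}$) the distributional Laplacian of $d_{\mu}^{\epsilon}(\cdot,z)$ is dominated by $C/\rho^{2}$, so $\Psi_{z}$ remains admissible for H\"ormander. The resulting peak function $F_{z}\in H(\C)$ satisfies $F_{z}(z)\gtrsim e^{\phi(z)}$ together with the pointwise bound $|F_{z}(\zeta)|\lesssim e^{\phi(\zeta)-\eta d_{\mu}^{\epsilon}(\zeta,z)/2}$ (via Lemma~\ref{bernstein}(a) applied with the shifted weight). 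Pairing $F_{z}$ against $\K(\cdot,w)$ using the reproducing property, and symmetrising the roles of $z$ and $w$ through the triangle inequality for $d_{\mu}^{\epsilon}$ (absorbed in a smaller $\eta$), transfers this decay to $|\K(z,w)|$ and yields (a).

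Finally for (c), differentiating $\K(z,z)=\sum_{n}|e_{n}(z)|^{2}$ in $z$ and $\bar z$ gives
\[
\partial_{z}\partial_{\bar z}\log\K(z,z)=\frac{\bigl(\sum_{n}|e_{n}'(z)|^{2}\bigr)\K(z,z)-\bigl|\sum_{n}e_{n}'(z)\overline{e_{n}(z)}\bigr|^{2}}{\K(z,z)^{2}}\ge 0.
\]
The upper bound follows from (b) and Lemma~\ref{bernstein}(b), which together give $\sum_{n}|e_{n}'(z)|^{2}\lesssim e^{2\phi(z)}/\rho(z)^{2}\simeq\K(z,z)/\rho(z)^{2}$, so the numerator is $\lesssim\K(z,z)^{2}/\rho(z)^{2}$. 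For the lower bound, observe that $h(z):=\log\K(z,z)-2\phi(z)$ is uniformly bounded by (b), so Green's identity applied on $D(z)$, combined with a Bernstein-type bound for $\nabla h$, yields $\int_{D(z)}\Delta\log\K\,dm=4\pi\mu(D(z))+O(1)=4\pi+O(1)$; since $\rho$ is essentially constant on $D(z)$ and the pointwise upper bound $\Delta\log\K\lesssim 1/\rho^{2}$ is already available, a standard mean-value comparison promotes this integrated bound to the pointwise lower bound $\Delta\log\K(z,z)\gtrsim 1/\rho(z)^{2}$.
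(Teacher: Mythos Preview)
The paper does not give its own proof of this proposition: it is quoted from \cite{MMO}, \cite{MO} and \cite{CO}, with only a remark explaining a harmless normalisation discrepancy. So there is nothing to compare against here, and what matters is whether your plan would actually reproduce those results. Parts (b), (d), and the upper bound in (c) are fine and follow the standard route. There are, however, two genuine gaps.

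For (a), your key step is the assertion that ``Lemma~\ref{distance} ensures that for $\epsilon$ small enough the distributional Laplacian of $d_\mu^\epsilon(\cdot,z)$ is dominated by $C/\rho^2$''. Lemma~\ref{distance} says nothing about second derivatives; it only controls the size of $d_\mu$. The function $d_\mu(\cdot,z)$ is a Riemannian distance for a metric that is merely Lipschitz, so there is no reason its Laplacian (even distributionally, with the right sign) should be dominated by $1/\rho^2$. In \cite{MO} the decay is obtained by constructing an auxiliary bounded subharmonic function $\psi$ with $\Delta\psi\simeq\rho^{-2}$ and then iterating a localised $\bar\partial$ estimate; one never works with $d_\mu^\epsilon$ directly as a weight. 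Your outline captures the philosophy (enhance the weight to force decay) but skips the actual construction, which is where the work lies.

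For the lower bound in (c), the final step does not go through. From $\int_{D(z)}\Delta\log\K\simeq 1$ together with the pointwise upper bound $\Delta\log\K\lesssim\rho^{-2}$ you can only conclude that $\Delta\log\K\gtrsim\rho^{-2}$ on a subset of $D(z)$ of comparable measure, not at the centre $z$ itself; there is no mean-value property for $\Delta\log\K$, and ``standard mean-value comparison'' does not exist here. The argument used in \cite{CO} (and invoked verbatim in the paper's proof of Proposition~\ref{covkerest}(c)) is the extremal formula
\[
\Delta\log\K(z,z)=\frac{4\sup\{|f'(z)|^{2}:f\in\Fdosa,\ f(z)=0,\ \|f\|\le 1\}}{\K(z,z)},
\]
after which one builds, again via H\"ormander, a competitor $f$ with $f(z)=0$, $\|f\|\lesssim 1$ and $|f'(z)|\gtrsim e^{\phi(z)}/\rho(z)$. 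That gives the pointwise lower bound directly.
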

\begin{rem}
The off diagonal decay estimates in Theorem~1.1  and Proposition~2.11 of \cite{MO} differ from the results just stated by factors involving $\rho$. This
is because the authors study spaces with a different norm; in \cite[Section 2.3]{MMO} it is shown that the class of spaces considered here and in
\cite{MO} is the same. However one can easily verify that minor modifications to the proof in \cite{MO} give the result just stated in the spaces we are
considering.
\end{rem}

In order to ensure that the sequence of normalised reproducing kernels $(k_\lambda)_\lil$ form a frame, we require that the sequence $\Lambda$ is
sampling. Sampling sequences in the Fock spaces we consider have been characterised in terms of a Beurling-type density. The following definitions appear
in \cite{MMO}.
\begin{defn}
A sequence $\Lambda$ is \emph{sampling} for $\Fdosa$ if there exists $C>0$ such that for every $f\in \Fdosa$
\begin{equation}\label{sampling}
C^{-1}\sum_{\lil}|f(\lambda)|^2 e^{-2\phi(\lambda)}\leq \| f\|_{\Fdosa}^2\leq C \sum_{\lil}|f(\lambda)|^2 e^{-2\phi(\lambda)}.
\end{equation}
\end{defn}

\begin{thm}[{\cite[Theorem A]{MMO}}]
A sequence $\Lambda$ is sampling for $\Fdosa$ if and only if $\Lambda$ is a finite  union of $\rho$-separated sequences containing a $\rho$-separated
subsequence $\Lambda'$  such that $\mathcal D^-_{\mu}(\Lambda')>$ where
\[
\mathcal D^-_{\mu}(\Lambda)=\liminf_{r\to \infty}\inf_{z\in\C} \frac{\# \bigl(\Lambda\cap \overline{D(z,r\rho(z))}\bigr)}
{\mu(D(z,r\rho(z)))}>\frac1{2\pi}.
\]
\end{thm}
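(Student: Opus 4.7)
This is a Beurling-type characterization of sampling, so the proof splits into necessity and sufficiency. I would follow the Seip/Beurling paradigm adapted to the doubling-measure setting, leaning on the kernel estimates in Proposition~\ref{kernest} and the distance/weight comparisons from Lemmas~\ref{rho comparison}--\ref{exp epsilon}.

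\textbf{Sufficiency.} Assume $\Lambda=\Lambda_1\cup\cdots\cup\Lambda_k$ with each $\Lambda_j$ $\rho$-separated and $\Lambda'\subset\Lambda$ $\rho$-separated with $\mathcal D^-_\mu(\Lambda')>1/(2\pi)$. The upper sampling inequality is routine: Lemma~\ref{bernstein}(a) gives $|f(\lambda)|^2e^{-2\phi(\lambda)}\lesssim\int_{D(\lambda)}|f|^2e^{-2\phi}\rho^{-2}dm$, and $\rho$-separation forces bounded overlap of the discs $D(\lambda)$, so summing over $\lambda\in\Lambda$ yields $\lesssim\|f\|_{\Fdosa}^2$. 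The lower inequality is the substance. Working with $\Lambda'$, I would build a bounded reconstruction using a $\bar\partial$-argument: start from the atomic approximation $g(z)=\sum_{\lambda\in\Lambda'}f(\lambda)\chi_\lambda(z)e^{\phi(z)-\phi(\lambda)}$, where $\{\chi_\lambda\}$ is a smooth partition of unity subordinate to $\{D(\lambda)\}$, and solve $\bar\partial u=\bar\partial g$ with H\"ormander's $L^2$-estimate against the weight $2\phi+\eta$ for an auxiliary strictly subharmonic $\eta$ of bounded Laplacian. The density hypothesis enters to ensure the covering is tight enough that the error $f-(g-u)$ is a contraction in $f$, yielding the lower sampling bound via a Neumann series.

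\textbf{Necessity.} That $\Lambda$ must be a finite union of $\rho$-separated sequences follows from testing the upper inequality in \eqref{sampling} against normalised reproducing kernels $k_\zeta$ concentrated near a hypothetical cluster of $\Lambda$: by Proposition~\ref{kernest}(a),(d) and Lemma~\ref{exp epsilon} the sum $\sum_\lambda|k_\zeta(\lambda)|^2e^{-2\phi(\lambda)}$ would blow up as the cluster tightens. For the density lower bound, I would use a compactness argument. Suppose $\Lambda$ is sampling but contains no $\rho$-separated subsequence with density $>1/(2\pi)$. Choose centres $z_n$ and radii $r_n\to\infty$ along which the density deficit is attained. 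After rescaling by $\rho(z_n)$, the weights $\phi_n(\zeta)=\phi(z_n+\rho(z_n)\zeta)-\phi(z_n)$ converge along a subsequence (via Montel/normal families for subharmonic functions) to a limit subharmonic weight $\Phi$ on $\C$, and the translated sequences $\rho(z_n)^{-1}(\Lambda-z_n)$ converge weakly to a set $\Lambda_\infty$ whose density against $\Delta\Phi$ is at most $1/(2\pi)$. In this limit model one produces a non-trivial element of the corresponding Fock space that is small on $\Lambda_\infty$, and transfers it back through the rescaling to a sequence of functions in $\Fdosa$ witnessing the failure of the lower bound in \eqref{sampling}.

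\textbf{Main obstacle.} The delicate point is the sharp threshold $1/(2\pi)$. The scale-invariance that makes Seip's original argument for the Bargmann--Fock space transparent is destroyed by a general doubling weight, so one cannot simply quote the classical Nyquist-type statement. Instead, the sharp constant must come out of a Jensen/Weierstrass comparison between $\K$ and a canonical product adapted to $\mu$, combined with the passage-to-the-limit step above; verifying that $\mathcal D^-_\mu$ is semicontinuous under the rescaled weak convergence, and that the limit weight $\Phi$ is non-degenerate enough to carry a Fock space, is where the doubling hypothesis (through Lemma~\ref{rho comparison} and the Lipschitz/H\"older control of $\rho$) is indispensable.
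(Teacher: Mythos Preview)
The paper does not prove this theorem: it is quoted verbatim as \cite[Theorem~A]{MMO} and used as a black box to identify sampling sequences with frames of normalised reproducing kernels. There is therefore no proof in the present paper to compare your proposal against.

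That said, your outline is broadly the strategy of the cited source \cite{MMO}. Sufficiency there does go through a $\bar\partial$-scheme (Berndtsson--Ortega-Cerd\`a style), though the argument is organised via multipliers/peak functions adapted to $\phi$ rather than the Neumann-series contraction you describe. Necessity is indeed a weak-limit argument in the spirit of Beurling and Seip; the sharp constant $1/(2\pi)$ is extracted from a comparison with a canonical product adapted to the doubling weight together with Jensen-type estimates, which is what you identify as the main obstacle. Your sketch of the finite-union necessity (testing the upper bound against $k_\zeta$) is also the right idea, though in \cite{MMO} it is packaged slightly differently via a Carleson-measure condition.
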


Recall that $k_\zeta (z)=\frac{\K(z,\zeta)}{\K(\zeta,\zeta)^{1/2}}$. It is clear from Proposition~\ref{kernest} that $|\langle
k_{\lambda},f\rangle|\simeq|f(\lambda)|e^{-\phi(\lambda)}$ for all $f\in\Fdosa$. Thus $\Lambda$ is a sampling sequence for $\Fdosa$ if and only if
\[
\|f\|_{\Fdosa}^2\simeq\sum_{\lil}|\langle k_{\lambda},f \rangle|^2\quad \text{for all } f\in\Fdosa,
\]
that is, if and only if $(k_{\lambda})_\lil$ is a frame in  $\Fdosa$.

We denote the (canonical) dual frame by $(\tilde{k}_{\lambda})_\lil$, and note that any $f\in\Fdosa$ can be expanded as
\[
f=\sum_{\lil}\langle f,\tilde{k}_{\lambda}\rangle k_{\lambda}.
\]
We introduce a new inner product on the space $\Fdosa$ given by
\[
\inprod{f}{g} =\sum_{\lil}\langle f,\tilde{k}_{\lambda}\rangle\overline{\langle g,\tilde{k}_{\lambda}\rangle}
\]
and note that the norm $\renorm{f}=\inprod{f}{f}^{1/2}$ is equivalent to the original norm $\|\cdot\|_{\Fdosa}$ (if $\Lambda$ is sampling).

\begin{prop}
The reproducing kernel for the (re-normed) space $(\Fdosa,\renorm{\cdot})$ is
\[
K(z,w)=\sum_\lil k_\lambda (z)\overline{k_\lambda (w)}.
\]
\end{prop}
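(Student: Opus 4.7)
The plan is to identify the new inner product $\inprod{\cdot}{\cdot}$ as $\langle \cdot, S^{-1}\cdot\rangle$, where $S$ is the frame operator of $(k_\lambda)_\lil$, and then simply read off the reproducing kernel of the re-normed space.

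Let $S\colon \Fdosa \to \Fdosa$ be the frame operator $Sf = \sum_\lil \langle f, k_\lambda\rangle k_\lambda$; because $(k_\lambda)_\lil$ is a frame, $S$ is bounded, self-adjoint and invertible, with $\tilde k_\lambda = S^{-1} k_\lambda$. Using self-adjointness of $S^{-1}$ together with the frame identity $\sum_\lil \langle h_1, k_\lambda\rangle \overline{\langle h_2, k_\lambda\rangle} = \langle Sh_1, h_2\rangle$ applied to $h_1 = S^{-1}f$ and $h_2 = S^{-1}g$, one computes for $f, g \in \Fdosa$
\[
\inprod{f}{g} = \sum_\lil \langle f, S^{-1}k_\lambda\rangle\,\overline{\langle g, S^{-1}k_\lambda\rangle} = \sum_\lil \langle S^{-1}f, k_\lambda\rangle\,\overline{\langle S^{-1}g, k_\lambda\rangle} = \langle S(S^{-1}f), S^{-1}g\rangle = \langle f, S^{-1}g\rangle.
\]

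With this identification, the new reproducing kernel $K(\cdot, w)$ is determined by
\[
\langle f, S^{-1} K(\cdot, w)\rangle = \inprod{f}{K(\cdot,w)} = f(w) = \langle f, \K(\cdot, w)\rangle \qquad \text{for every } f \in \Fdosa,
\]
which forces $K(\cdot, w) = S\,\K(\cdot, w)$. Expanding via the definition of $S$ and using $\langle \K(\cdot, w), k_\lambda\rangle = \overline{k_\lambda(w)}$ (the reproducing property of $\K$ combined with conjugation) gives
\[
K(\cdot, w) = \sum_\lil \langle \K(\cdot, w), k_\lambda\rangle\, k_\lambda = \sum_\lil \overline{k_\lambda(w)}\, k_\lambda,
\]
which is the stated formula; unconditional convergence of this series in $\Fdosa$ follows from the upper frame bound applied to $\K(\cdot, w) \in \Fdosa$, which places $(\overline{k_\lambda(w)})_\lil = (\langle \K(\cdot, w), k_\lambda\rangle)_\lil$ in $\ell^2(\Lambda)$.

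There is no real obstacle once the key identification $\inprod{f}{g} = \langle f, S^{-1}g\rangle$ is established; the only care points are respecting the conjugate-linear convention in the second slot of both inner products, and using that $S$ (hence $S^{-1}$) is self-adjoint so that it can be moved freely between the two slots in the computations above.
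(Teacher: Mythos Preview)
Your proof is correct and takes a somewhat different, more structural route than the paper. The paper verifies the reproducing property by a direct double-sum computation: it expands $\inprod{f}{K_w}$ using the definition of $\inprod{\cdot}{\cdot}$, writes $\overline{\langle K_w,\tilde k_\lambda\rangle}$ as a sum over $\lambda'$, swaps $\langle k_{\lambda'},\tilde k_\lambda\rangle$ for $\langle k_\lambda,\tilde k_{\lambda'}\rangle$ via self-adjointness of the frame operator, and then collapses the double sum using the dual-frame expansion of $f$. You instead first establish the clean identity $\inprod{f}{g}=\langle f,S^{-1}g\rangle$, from which the reproducing kernel is immediately forced to be $S\K(\cdot,w)$, and the stated formula drops out by expanding $S$. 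Both arguments hinge on the same ingredient (self-adjointness of $S$), but yours explains \emph{why} the kernel has this form rather than just checking it, and it avoids the bookkeeping of the double sum; the paper's version, on the other hand, is entirely self-contained and does not need to invoke existence of a reproducing kernel for the re-normed space (which you implicitly use, though it follows at once from equivalence of the norms).
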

\begin{proof}
It is clear that, for each fixed $w\in\C$, $K(\cdot,w)=K_w$ is in the space, so we need only verify the reproducing property. Note first that $\langle
k_{\lambda'},\tilde{k}_{\lambda}\rangle=\langle \tilde{k}_{\lambda'},k_{\lambda}\rangle$. This follows from the fact that
$\tilde{k}_{\lambda}=S^{-1}k_\lambda$, where $S$ is the frame operator associated to $(k_{\lambda})_\lil$, and $S$ is self adjoint with respect to
$\langle\cdot,\cdot\rangle$. Now, for any $f\in\Fdosa$,
\begin{align*}
\inprod{f}{K_w}=\sum_{\lil}\langle f,\tilde{k}_{\lambda}\rangle\overline{\langle K_w,\tilde{k}_{\lambda}\rangle}&=\sum_{\lil}\sum_{\lpil}\langle f,\tilde{k}_{\lambda}\rangle\overline{\langle k_{\lambda'},\tilde{k}_{\lambda}\rangle}k_{\lambda'}(w)\\
&=\sum_{\lpil}k_{\lambda'}(w)\sum_{\lil}\langle f,\tilde{k}_{\lambda}\rangle\langle k_{\lambda},\tilde{k}_{\lambda'}\rangle\\
&=\sum_{\lpil}k_{\lambda'}(w)\left\langle \sum_{\lil}\langle f,\tilde{k}_{\lambda}\rangle k_{\lambda},\tilde{k}_{\lambda'}\right\rangle\\
&=\sum_{\lpil}k_{\lambda'}(w)\langle f,\tilde{k}_{\lambda'}\rangle=f(w),
\end{align*}
which completes the proof.
\end{proof}

We now show that the growth and off diagonal diagonal decay of $K$ are similar to that of $\K$.

\begin{prop}\label{covkerest}
There exist positive constants $C$, $c$ and $\epsilon$ (depending only on the doubling constant for $\mu$ and the sampling constant appearing in
\eqref{sampling}) such that for any $z,w\in\C$
\begin{alist}
\item $|K(z,w)|\leq C e^{\phi(z)+\phi(w)}e^{-cd_\mu^\epsilon(z,w)},$
\item $C^{-1}e^{2\phi(z)}\leq K(z,z)\leq Ce^{2\phi(z)}$ and
\item $C^{-1}\frac{1}{\rho(z)^2}\leq\Delta\log K(z,z)\leq C\frac{1}{\rho(z)^2}.$
\item There exists $r>0$ such that $|K(z,w)|\geq C e^{\phi(z)+\phi(w)}$ for all $w\in D^r(z)$.
\end{alist}
\end{prop}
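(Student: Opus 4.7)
The plan is to establish parts (b), (a), (d), (c) in that order, using the variational characterization of $K$ together with the norm equivalence $\renorm{\cdot}\simeq\|\cdot\|_{\Fdosa}$ and the analogous estimates for $\K$ from Proposition~\ref{kernest}. Part (b) is the quickest: since $K$ is the reproducing kernel for $\renorm{\cdot}$, we have $K(z,z)=\sup_{f\neq 0}|f(z)|^2/\renorm{f}^2$, and equivalence of norms yields $K(z,z)\simeq\sup_{f\neq 0}|f(z)|^2/\|f\|_{\Fdosa}^2=\K(z,z)\simeq e^{2\phi(z)}$.

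For part (a), Proposition~\ref{kernest}(a),(b) applied to the normalised reproducing kernels give $|k_\lambda(z)|\lesssim e^{\phi(z)}e^{-d_\mu^\epsilon(z,\lambda)}$, so
\[
|K(z,w)|\le\sum_{\lil}|k_\lambda(z)||k_\lambda(w)|\lesssim e^{\phi(z)+\phi(w)}\sum_{\lil}e^{-d_\mu^\epsilon(z,\lambda)-d_\mu^\epsilon(\lambda,w)}.
\]
By the triangle inequality for $d_\mu$, one of $d_\mu(z,\lambda)$ and $d_\mu(\lambda,w)$ is at least $d_\mu(z,w)/2$, so $d_\mu^\epsilon(z,\lambda)+d_\mu^\epsilon(\lambda,w)\ge 2^{-\epsilon}d_\mu^\epsilon(z,w)$. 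Splitting this quantity as two equal halves, I pull out a factor $e^{-cd_\mu^\epsilon(z,w)}$ and bound the residual by $\sum_\lambda e^{-\frac12 d_\mu^\epsilon(z,\lambda)}\lesssim 1$ via Lemma~\ref{exp epsilon}(b) with $k=0$, which applies because $\Lambda$ is a finite union of $\rho$-separated sequences.

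For (d), I note that $\renorm{K(\cdot,z)}^2=K(z,z)$ by the reproducing property, hence $\|K(\cdot,z)\|_{\Fdosa}^2\lesssim e^{2\phi(z)}$ by (b). Lemma~\ref{bernstein}(b) applied to the holomorphic function $w\mapsto K(w,z)$ then yields $|\nabla_w(|K(\cdot,z)|e^{-\phi})(w)|\lesssim e^{\phi(z)}/\rho(w)$. Since $K(z,z)e^{-2\phi(z)}\gtrsim 1$ and $\rho(w)\simeq\rho(z)$ on $D(z)$, integrating this gradient bound along a segment from $z$ to $w$ shows $|K(w,z)|e^{-\phi(z)-\phi(w)}\ge c>0$ on $D^r(z)$ for $r$ sufficiently small.

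For (c), the key identity is
\[
\Delta\log K(z,z)=\frac{4}{K(z,z)}\sup_{\substack{f\in\Fdosa\\ f(z)=0}}\frac{|f'(z)|^2}{\renorm{f}^2},
\]
obtained by recognising the right-hand side as $4\partial_z\partial_{\bar z}$ applied to the reproducing kernel of the codimension-one subspace $\{f:f(z)=0\}$. The upper bound follows by dropping the constraint $f(z)=0$ and applying the pointwise Bernstein inequality $|f'(z)|^2\lesssim\rho(z)^{-2}e^{2\phi(z)}\|f\|_{\Fdosa}^2$, which I derive from Cauchy's formula on $D(z,\rho(z)/2)$ together with Lemma~\ref{bernstein}(a). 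For the lower bound I test against the explicit function $f(w)=(w-z)\K(w,z)$, which vanishes at $z$, satisfies $|f'(z)|=\K(z,z)\simeq e^{2\phi(z)}$, and whose norm is controlled via Proposition~\ref{kernest}(a) and Lemma~\ref{exp epsilon}(a) with $k=2$:
\[
\|f\|_{\Fdosa}^2\lesssim e^{2\phi(z)}\iC|w-z|^2 e^{-2d_\mu^\epsilon(w,z)}\frac{dm(w)}{\rho(w)^2}\lesssim e^{2\phi(z)}\rho(z)^2.
\]
Combining these gives $\Delta\log K(z,z)\gtrsim 1/\rho(z)^2$. The main obstacle is precisely this lower bound, because no explicit basis or dual frame is available for the $\renorm{\cdot}$-norm; the variational formula sidesteps the difficulty by reducing the task to exhibiting one admissible test function, and multiplying $\K(\cdot,z)$ by $(w-z)$ produces one whose derivative is forced to be $\K(z,z)$ while its norm inherits the off-diagonal decay of $\K$ already furnished by Proposition~\ref{kernest}.
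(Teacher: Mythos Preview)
Your arguments for (a), (b), and (d) are correct and essentially coincide with the paper's: the paper also uses the variational formula $K(z,z)=\sup\{|f(z)|^2:\renorm{f}\le1\}$ together with norm equivalence for (b), the gradient bound from Lemma~\ref{bernstein}(b) for (d), and the same triangle-inequality splitting of the sum $\sum_\lambda e^{-d_\mu^\epsilon(z,\lambda)-d_\mu^\epsilon(\lambda,w)}$ for (a).

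For (c), however, there is a genuine gap in your upper bound. The ``pointwise Bernstein inequality'' $|f'(z)|^2\lesssim\rho(z)^{-2}e^{2\phi(z)}\|f\|_{\Fdosa}^2$ that you claim after dropping the constraint $f(z)=0$ is \emph{false}. In the classical case $\phi(w)=|w|^2/2$ (so $\rho\simeq1$), take $f(w)=\K(w,z)=e^{w\bar z}$: then $\|f\|_{\Fdosa}^2=\K(z,z)\simeq e^{|z|^2}$ while $|f'(z)|^2=|z|^2e^{2|z|^2}$, so $|f'(z)|^2/\|f\|_{\Fdosa}^2\simeq|z|^2e^{|z|^2}$, which exceeds $\rho(z)^{-2}e^{2\phi(z)}\simeq e^{|z|^2}$ by the unbounded factor $|z|^2$. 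The Cauchy-integral derivation you sketch cannot work because $e^{\phi(w)-\phi(z)}$ is not uniformly bounded for $w\in D(z)$. The constraint $f(z)=0$ is exactly what kills this bad example, so it must be retained.

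The paper's route for (c) is both simpler and avoids the problem: keep the constraint and use norm equivalence \emph{inside} the extremal formula,
\[
\Delta\log K(z,z)=\frac{4\sup\{|f'(z)|^2: f(z)=0,\ \renorm{f}\le1\}}{K(z,z)}\simeq\frac{4\sup\{|f'(z)|^2: f(z)=0,\ \|f\|_{\Fdosa}\le1\}}{\K(z,z)}=\Delta\log\K(z,z),
\]
and then invoke Proposition~\ref{kernest}(c). Your lower-bound argument via the test function $f(w)=(w-z)\K(w,z)$ is correct and pleasant, but it is not needed once the comparison with $\Delta\log\K(z,z)$ is in hand.
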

\begin{proof}
We have (see \cite[p. 26]{Ber})
\begin{align*}
\sqrt{K(z,z)}&=\sup\{|f(z)|:f\in\Fdosa \; ,\; \renorm{f} \leq 1\}\\
&\simeq\sup\{|f(z)| : f\in \Fdosa \; ,\; \|f\|_{\Fdosa} \leq 1\}=\sqrt{\K(z,z)}
\end{align*}
and so Proposition~\ref{kernest} implies (b). Similarly (again see \cite[p. 26]{Ber})
\[
\Delta\log K(z,z)=\frac{4\sup\{|f'(z)|^2:\ f\in {\mathcal F}_\phi^2,\ f(z)=0; \renorm{f}\leq1\}}{K(z,z)}\simeq\Delta\log \K(z,z)
\]
so that (c) also follows from Proposition~\ref{kernest}.

We note that for all $w\in D^r(z)$, applying Lemma~\ref{bernstein}(b),
\[||K(w,z)|e^{-\phi(w)}-|K(z,z)|e^{-\phi(z)}|\lesssim\frac{1}{\rho(z)}\|K(\cdot,z)\|_\Fdosa|z-w|\lesssim re^{\phi(z)}\]
so that for sufficiently small $r$, (b) implies (d).

Finally we have, by the estimates in Proposition~\ref{kernest},
\[
|K(w,z)|\leq\sum_\lil |k_\lambda (z)\overline{k_\lambda (w)}|\lesssim e^{\phi(z)+\phi(w)}\sum_\lil
e^{-d_\mu^\epsilon(z,\lambda)-d_\mu^\epsilon(\lambda,w)}.
\]
Now
\[
\sum_{\lil,d_\mu(z,\lambda)>\frac{1}{2}d_\mu(z,w)}e^{-d_\mu^\epsilon(z,\lambda)-d_\mu^\epsilon(\lambda,w)}\leq
e^{-2^{-\epsilon}d_\mu^\epsilon(z,w)}\sum_{\lil}e^{-d_\mu^\epsilon(\lambda,w)}\lesssim e^{-2^{-\epsilon}d_\mu^\epsilon(z,w)}
\]
where we have used Lemma~\ref{exp epsilon}. The remaining terms satisfy $d_\mu(w,\lambda)\geq\frac{1}{2}d_\mu(z,w)$ and may be treated similarly.
\end{proof}
\begin{rems}
1. When we apply this result to $K_L$, it is important that the constants in the relation $\renorm{\cdot}_L\simeq\|\cdot\|_{\FL}$ are uniform in $L$, so
that the constant $C$ appearing in the conclusion can be taken to be uniform in $L$. It is not difficult to see that we can always do this. For each $L$
we chose a sequence $\Lambda_L$ and constants $\delta_0<R_0$ which do not depend on $L$ satisfying the following properties:
\begin{itemize}
\item The discs $(D_L^{\delta_0}(\lambda))_{\lil_L}$ are pairwise disjoint.
\item We have $\C=\cup_{\lil_L}D_L^{R_0}(\lambda)$.
\item Each $z\in\C$ is contained in at most $N_0$ discs of the form $D_L^{R_0+1}(\lambda)$ where $N_0$ does not depend on $z$ or $L$.
\end{itemize}
Applying Lemma~\ref{bernstein}(b) one can show that if $R_0$ is sufficiently small then $\sum_{\lil_L}|f(\lambda)|^2 e^{-2\phi(\lambda)}\simeq \|
f\|_{\FL}$ where the implicit constants are uniform in $L$.

2. We have used only the fact that $(k_{\lambda})_\lil$ is a frame, and the expression of the reproducing kernel as an extremal problem, to show that
$K(z,z)\simeq\K(z,z)$ and $\Delta\log K(z,z)\simeq\Delta\log\K(z,z)$. Our proof therefore carries over to any space where these properties hold.
\end{rems}

We will sometimes be able to prove sharper results if we assume some extra off-diagonal decay on the kernel $\K_L$. The condition we will use is the
following.
\begin{defn}\label{fastdef}
The kernel $\K_L$ has \emph{fast $L^2$ off-diagonal decay} if, given $C,r>0$ there exists $R>0$ (independent of $L$) such that
\begin{equation}\label{fastL2}
\sup_{z\in D^r(z_0)}e^{-2\phi_L(z)}\int_{\C\setminus D^R(z_0)}|\K_L(z,\zeta)|^2e^{-2\phi_L(\zeta)}\frac{dm(\zeta)}{\rho_L(\zeta)^2}\leq e^{-CL}
\end{equation}
for all $z_0\in\C$ and $L$ sufficiently large.
\end{defn}
\begin{rems}
1. If $\phi(z)=|z|^2/2$ then since $\K_L(z,\zeta)=e^{Lz\overline{\zeta}}/2\pi^2$ it is easy to see that the $\K_L$ has fast $L^2$ off-diagonal decay. More
generally if $\phi(z)=|z|^\alpha/2$ it can also be seen that $\K_L$ has fast $L^2$ off-diagonal decay but we postpone the proof to an appendix since it is
long and tedious.

2. We also note that \cite[Proposition 1.18]{Ch} shows that there exist $\phi$ with $0<c<\Delta\phi<C$ that do not satisfy \eqref{fastL2}, so that local
flatness does not imply fast $L^2$ decay.
\end{rems}
\section{Proof of Theorems~\ref{almostsurebasis} and \ref{variance}}
In this section we will prove Theorems~\ref{almostsurebasis} and \ref{variance}. We will follow the scheme of the proof of \cite[Theorem 1.1]{SZ99}. We
begin by proving Theorem~\ref{almostsurebasis} (a). Recall that $n_L$ is the counting measure on the zero set of the GAF defined via bases,
\eqref{basisdefn}.

\begin{proof}[Proof of Theorem~\ref{almostsurebasis}(a)]
Let $\psi$ be a smooth function with compact support in $\C$. The Edelman-Kostlan formula gives
\[
\E\left[n(\psi,L)\right]=\frac{1}{4\pi L}\iC\psi(z)\Delta\log \K_L(z,z) dm(z)
\]
so that, by Proposition~\ref{kernest}
\begin{align*}
\left|\E\left[n(\psi,L)\right]-\frac{1}{2\pi}\int\psi d\mu\right|&=\frac{1}{4\pi L}\left|\iC\Delta\psi(z)\left(\log
\K_L(z,z)-2\phi_L(z)\right)dm(z)\right|\\ &\lesssim\frac{1}{L}\iC|\Delta\psi(z)|dm(z).
\end{align*}
\end{proof}

\begin{proof}[Proof of Theorem~\ref{variance}]
We have (see \cite[Theorem~3.3]{SZ08} or \cite[Lemma~3.3]{NS})
\[
\V[n(\psi,L)]=\iC\iC\Delta\psi(z)\Delta\psi(w)J_L(z,w)dm(z)dm(w)
\]
where
\[
J_L(z,w)=\frac{1}{16\pi^2}\sum_{n=1}^\infty\frac{1}{n^2}\left(\frac{|\K_L(z,w)|^2}{\K_L(z,z)\K_L(z,w)}\right)^n\simeq\frac{|\K_L(z,w)|^2}{\K_L(z,z)\K_L(z,w)}.
\]
Fix $z\in\supp\psi$, choose $\alpha>2/\gamma$ where $\gamma$ is the constant appearing in \eqref{rhoL}, and let $\epsilon$ be the constant from
Proposition~\ref{kernest}. Write
\[
I_1=\int_{d_L(z,w)\geq(\alpha\log L)^{1/\epsilon}}\Delta\psi(w)J_L(z,w)dm(w),
\]
\[
I_2=\int_{d_L(z,w)<(\alpha\log L)^{1/\epsilon}}\left(\Delta\psi(w)-\Delta\psi(z)\right)J_L(z,w)dm(w),
\]
\[
I_3=\int_{d_L(z,w)<(\alpha\log L)^{1/\epsilon}}J_L(z,w)dm(w).
\]
and note that
\[
\iC\Delta\psi(w)J_L(z,w)dm(w)=I_1+I_2+\Delta\psi(z)I_3.
\]
Now, by Proposition~\ref{kernest}, $J_L(z,w)\lesssim e^{-d_L^\epsilon(z,w)}\leq L^{-\alpha}$ when $d_L(z,w)\geq (\alpha\log L)^{1/\epsilon}$ and so
\[
|I_1|\leq L^{-\alpha}\int_{d_L(z,w)\geq (\alpha\log L)^{1/\epsilon}}|\Delta\psi(w)|dm(w)\leq L^{-\alpha}\|\Delta\psi\|_{L^1(\C)}.
\]
Also, since $d_L(z,w)\gtrsim L^{\gamma}d_\mu(z,w)$, we see that if $z$ and $w$ satisfy $d_L(z,w)<(\alpha\log L)^{1/\epsilon}$ then
\[
\Delta\psi(w)-\Delta\psi(z)\rightarrow0\text{ as }L\rightarrow\infty,
\]
and so
\[
|I_2|\leq\sup_{d_L(z,w)<(\alpha\log L)^{1/\epsilon}}|\Delta\psi(w)-\Delta\psi(z)|I_3=o(I_3).
\]
Finally, using Proposition~\ref{kernest} and Lemma~\ref{distance}, we see that
\[
I_3\lesssim\iC e^{-d_L^\epsilon(z,w)}dm(w)\lesssim\iC e^{-c(\frac{|z-w|}{\rho_L(z)})^{\epsilon'}}dm(w)=\rho_L(z)^2\iC e^{-c'|\zeta|^{\epsilon'}}dm(\zeta).
\]
Similarly, for $r$ sufficiently small,
\[
I_3\gtrsim\int_{D_L^r(z)} dm(w)=\pi r^2\rho_L(z)^2,
\]
that is, $I_3\simeq\rho_L(z)^2$. We thus conclude that (note that $\rho_L(z)^2\gtrsim L^{-2/\gamma}\rho(z)^2\gtrsim L^{-\alpha}$)
\[
\V[n(\psi,L)]=\iC\Delta\psi(z)\left(I_1+I_2+\Delta\psi(z)I_3\right)dm(z)\simeq\iC\Delta\psi(z)^2\rho_L(z)^2dm(z)
\]
which completes the proof.
\end{proof}
We will now use the results we have just proved for the mean and the variance of the `smooth linear statistics' to prove Theorem~\ref{almostsurebasis}
(b).
\begin{proof}[Proof of Theorem~\ref{almostsurebasis} (b)]
First note that
\begin{align*}
\E\left[\Big(n(\psi,L)-\frac{1}{2\pi}\int\psi d\mu\Big)^2\right]\lesssim&\E\bigg[\Big(n(\psi,L)-\E[n(\psi,L)]\Big)^2\bigg]\\
&+\left(\E[n(\psi,L)]-\frac{1}{2\pi}\int\psi d\mu\right)^2.
\end{align*}
Now Theorem~\ref{variance} implies that
\begin{align*}
\E\Big[\big(n(\psi,L)-\E[n(\psi,L)]\big)^2\Big]&=\V[n(\psi,L)]\\
&\simeq L^{-2}\iC(\Delta\psi(z))^2\rho_L(z)^2dm(z)\\
&\lesssim L^{-2(1+\gamma)}\iC(\Delta\psi(z))^2\rho(z)^2dm(z)
\end{align*}
while (a) implies that
\[
\E[n(\psi,L)]-\frac{1}{2\pi}\int\psi d\mu=O(L^{-1}).
\]
We thus infer that
\[
\E\left[\left(n(\psi,L)-\frac{1}{2\pi}\int\psi d\mu\right)^2\right]\lesssim L^{-2}
\]
which implies that
\[
\E\left[\sum_{L=1}^\infty\left(\frac{1}{L}n(\psi,L)-\frac{1}{2\pi}\int\psi
d\mu\right)^2\right]=\sum_{L=1}^\infty\E\left[\left(\frac{1}{L}n(\psi,L)-\frac{1}{2\pi}\int\psi d\mu\right)^2\right]<+\infty.
\]
This means that
\[
\frac{1}{L}n(\psi,L)-\frac{1}{2\pi}\int\psi d\mu\rightarrow0
\]
almost surely, as claimed.
\end{proof}
\section{Asymptotic Normality}
This section consists of the proof of Theorem~\ref{asympnorm}. As we have previously noted, we shall consider only the GAF defined via frames
\eqref{framedefn}. All of the results stated here apply equally well to the GAF defined via bases \eqref{basisdefn}, and the proofs are identical except
that the estimates from Proposition~\ref{covkerest} should be replaced by the estimates from Proposition~\ref{kernest}. Our proof of
Theorem~\ref{asympnorm} is based entirely on the following result which was used to prove asymptotic normality in the case $\phi(z)=|z|^2$ (\cite[Main
Theorem]{ST1}).

\begin{thm}[{\cite[Theorem 2.2]{ST1}}]
Suppose that for each natural number $m$, $f_m$ is a Gaussian analytic function with covariance kernel $\Xi_m$ satisfying $\Xi_m(z,z)=1$ and let $n_m$ be
counting measure on the set of zeroes of $f_m$. Let $\nu$ be a measure on $\C$ satisfying $\nu(\C)=1$ and suppose $\Theta:\C\to\R$ is a bounded measurable
function. Define $Z_m=\iC \log(|f_m(z)|)\Theta(z)d\nu(z)$ and suppose that
\begin{equation}\label{normratio}
\liminf_{m\rightarrow\infty}\frac{\iint_{\C^2}|\Xi_m(z,w)|^{2}\Theta(z)\Theta(w)d\nu(z)d\nu(w)}{\sup_{z\in\C}\iC|\Xi_m(z,w)|^2d\nu(z)} >0,
\end{equation}
and that
\begin{equation}\label{norm0}
\lim_{m\rightarrow\infty}\sup_{z\in\C}\iC|\Xi_m(z,w)|d\nu(w) = 0.
\end{equation}
Then the distributions of the random variables
\[
\frac{Z_m-\E Z_m}{\sqrt{\V Z_m}}
\]
converge weakly to $\mathcal{N}(0,1)$ as $m\rightarrow\infty$.
\end{thm}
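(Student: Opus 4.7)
The plan is to apply the method of cumulants: it suffices to show that for every $p\ge 3$ the normalised cumulant $\kappa_p(Z_m)/\V(Z_m)^{p/2}$ tends to zero as $m\to\infty$, since this forces convergence in distribution to $\mathcal N(0,1)$. By multilinearity of cumulants and Fubini,
\[
\kappa_p(Z_m) = \int\cdots\int \mathrm{cum}_p\bigl(\log|f_m(z_1)|,\dots,\log|f_m(z_p)|\bigr)\prod_{j=1}^p\Theta(z_j)d\nu(z_j).
\]

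The next step is to obtain a diagrammatic formula for these joint cumulants. Since $(f_m(z_1),\dots,f_m(z_p))$ is a centred complex Gaussian vector with unit variances and covariance matrix $\Xi_m(z_i,z_j)$, I would combine the Frullani representation
\[
\log|\zeta|^2 = -\gamma + \int_0^\infty\bigl(e^{-t}-e^{-t|\zeta|^2}\bigr)\frac{dt}{t}
\]
with the determinantal Laplace transform $\E[\exp(-\sum_j t_j|f_m(z_j)|^2)] = 1/\det(I+T\Xi_m)$ and the logarithmic expansion $-\log\det(I+T\Xi_m)=\sum_{k\ge1}(-1)^k\,\mathrm{tr}((T\Xi_m)^k)/k$. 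Taking the mixed $t_j$-derivatives at $t=0$ and then integrating over the Frullani parameters expresses the joint cumulant as a sum over closed walks on $\{1,\dots,p\}$. In the regime of small $|\Xi_m|$ forced by~\eqref{norm0} the Hamiltonian-cycle graphs dominate, giving a bound
\[
\bigl|\mathrm{cum}_p\bigl(\log|f_m(z_1)|,\dots,\log|f_m(z_p)|\bigr)\bigr|\lesssim\sum_{\sigma\text{ cyclic on }[p]}\prod_{i=1}^p|\Xi_m(z_{\sigma(i)},z_{\sigma(i+1)})|.
\]

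I would then integrate this cycle bound against $\prod_j\Theta(z_j)d\nu(z_j)$ and use cyclic relabelling symmetry to obtain $|\kappa_p(Z_m)|\lesssim\mathrm{tr}\bigl((M_\Theta T_m)^p\bigr)$, where $T_m$ is the self-adjoint integral operator on $L^2(d\nu)$ with kernel $|\Xi_m(\cdot,\cdot)|$ and $M_\Theta$ is multiplication by $\Theta$. Conjugating to the positive self-adjoint $A:=M_{|\Theta|^{1/2}}\,T_m\, M_{|\Theta|^{1/2}}$ preserves the trace, so the spectral inequality $\mathrm{tr}(A^p)\le\|A\|_{\mathrm{op}}^{p-2}\mathrm{tr}(A^2)$ reduces the estimate to two factors. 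The Schur test gives $\|A\|_{\mathrm{op}}\lesssim\|\Theta\|_\infty\sup_z\int|\Xi_m(z,w)|d\nu(w)$, which tends to zero by~\eqref{norm0}. On the other hand, the pointwise identity $\mathrm{Cov}(\log|f_m(z)|,\log|f_m(w)|)=\tfrac14\sum_{k\ge1}|\Xi_m(z,w)|^{2k}/k^2$ shows that $\V(Z_m)\simeq\mathrm{tr}(A^2)$, which by~\eqref{normratio} is bounded below by a constant times $\sup_z\int|\Xi_m(z,w)|^2d\nu(w)$. Combining,
\[
\frac{|\kappa_p(Z_m)|}{\V(Z_m)^{p/2}}\lesssim\left(\frac{\|A\|_{\mathrm{op}}}{\|A\|_{\mathrm{HS}}}\right)^{p-2},
\]
which vanishes in the limit.

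The hardest part is the diagrammatic expansion of the joint cumulants: isolating the Hamiltonian-cycle leading term demands careful bookkeeping of the non-cycle graph contributions, and the interchange of the mixed $t_j$-derivatives with the Frullani integrations must be justified uniformly in $m$. The trace estimate of the third paragraph is then comparatively routine, and the two hypotheses are tailored so that $\|A\|_{\mathrm{op}}$ (controlled by~\eqref{norm0}, together with the continuity of the GAF covariance that forces a shrinking correlation length) decays strictly faster than $\|A\|_{\mathrm{HS}}$ (bounded below by~\eqref{normratio}), so that the effective rank of $A$ diverges.
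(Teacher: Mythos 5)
The paper does not prove this statement at all: it is quoted from \cite[Theorem 2.2]{ST1} (with the remark that the denominator in \eqref{normratio} has been changed from $\sup_z\int|\Xi_m(z,w)|\,d\nu(w)$ to $\sup_z\int|\Xi_m(z,w)|^2\,d\nu(w)$), and Sodin--Tsirelson's own argument is a method-of-moments/diagram argument applied to the Wiener chaos expansion of $\log|f_m|$. Your cumulant scheme is in the same diagrammatic family, but it has two concrete problems. First, the claimed bound $|\mathrm{cum}_p(\log|f_m(z_1)|,\dots,\log|f_m(z_p)|)|\lesssim\sum_{\sigma\ \mathrm{cyclic}}\prod_i|\Xi_m(z_{\sigma(i)},z_{\sigma(i+1)})|$ is not correct as a pointwise inequality: the connected diagrams produced by the Frullani/determinant (or chaos) expansion include graphs with repeated edges, e.g.\ a star on $\{c,l_1,\dots,l_{p-1}\}$ with each edge doubled, whose contribution $\prod_j|\Xi_m(c,l_j)|^2$ can be nonzero while every Hamiltonian-cycle product vanishes. (Also, ``mixed $t_j$-derivatives at $t=0$'' yields cumulants of the $|f_m(z_j)|^2$, not of the logarithms; the Frullani route needs the full $t$-dependence and a moment--cumulant inversion over partitions.) These non-cycle diagrams are in fact harmless \emph{after} integration against $\nu$, since each carries at least one factor $|\Xi_m|^2$ per edge of a spanning tree and is therefore controlled by powers of $\sup_z\int|\Xi_m|^2d\nu$; but that bookkeeping, which you defer, is exactly the content of the proof and cannot be waved through.

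The second and more serious gap is the final step. The simple-cycle terms you isolate are (up to constants) precisely the cumulants of the first-chaos part $\int(|\hat f_m|^2-1)\Theta\,d\nu$, a Gaussian quadratic form, and for such a form the normalised $p$-th cumulants vanish if and only if $\|A\|_{\mathrm{op}}/\|A\|_{\mathrm{HS}}\to0$. Your hypotheses give $\|A\|_{\mathrm{op}}\lesssim u_m:=\sup_z\int|\Xi_m(z,w)|d\nu(w)\to0$ by \eqref{norm0} and $\|A\|_{\mathrm{HS}}^2\gtrsim s_m:=\sup_z\int|\Xi_m(z,w)|^2d\nu(w)$ by \eqref{normratio}, but Cauchy--Schwarz only yields $u_m\le s_m^{1/2}$, so $\|A\|_{\mathrm{op}}/\|A\|_{\mathrm{HS}}$ is merely bounded, not $o(1)$; the appeal to ``shrinking correlation length'' is not a consequence of \eqref{normratio} and \eqref{norm0}, and one can arrange $u_m\simeq s_m^{1/2}$ (a kernel whose variance is dominated by a single slowly decaying mode, e.g.\ $\Xi_m=(1-\delta_m^2)e^{L_m^2(z\bar w-1)}+\delta_m^2$ on the unit circle with $\delta_m^4\gg L_m^{-1}$), in which case the ratio does not tend to zero and the first chaos is dominated by one eigenvalue. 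So the argument closes only under Sodin--Tsirelson's original normalisation, where the denominator of \eqref{normratio} is $u_m$ itself (then $\|A\|_{\mathrm{op}}^2/\|A\|_{\mathrm{HS}}^2\lesssim u_m\to0$), or with additional decay information on $\Xi_m$ of the kind the paper actually has in its application; as written, your proof of the statement in the form displayed above does not go through.
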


\begin{rem}
In fact in \cite{ST1} the authors prove a more general result, but we shall only require the form we have stated. We have also slightly modified the
denominator in condition \eqref{normratio}, but it is easy to verify that this does not affect the proof (cf. \cite[Section 2.5]{ST1}).
\end{rem}

\begin{proof}[Proof of Theorem~\ref{asympnorm}]
We consider instead the random variable $n(\psi,L)=\int\psi dn_L$ since it is clear that the factor $L^{-1}$ is unimportant. We first note that Green's
formula implies that
\[
n(\psi,L)=\frac{1}{2\pi}\iC\Delta\psi(z)\log|f_L(z)|dm(z)
\]
which combined with the Edelman-Kostlan formula gives
\[
Z_L(\psi)=n(\psi,L)-\E[n(\psi,L)]=\frac{1}{2\pi}\iC\Delta\psi(z)\log\frac{|f_L(z)|}{K_L(z,z)^{1/2}}dm(z).
\]
Write $\hat{f}_L(z)=\frac{f_L(z)}{K_L(z,z)^{1/2}}$, $\Theta(z)=\frac{c}{2\pi}\Delta\psi(z)\rho(z)^2$ and
$d\nu(z)=\frac{1}{c}\chi_{\supp\psi}(z)\frac{dm(z)}{\rho(z)^2}$ where the constant $c$ is chosen so that $\nu(\C)=1$. Note that
\[
Z_L(\psi)=\iC\log|\hat{f}_L(z)|\Theta(z)d\nu(z)
\]
so we need only check that conditions \eqref{normratio} and \eqref{norm0} hold to show asymptotic normality. Here
$\Xi_L(z,w)=\frac{K_L(z,w)}{K_L(z,z)^{1/2}K_L(w,w)^{1/2}}$. Now, by the estimates of Proposition~\ref{covkerest} and \eqref{rhoLideal},
\begin{align*}
\iC|\Xi_L(z,w)|d\nu(w)&\simeq e^{-\phi_L(z)}\iC|K_L(z,w)|e^{-\phi_L(w)}d\nu(w)\\
&\leq e^{-\phi_L(z)}\left(\iC|K_L(z,w)|^2e^{-2\phi_L(w)}\frac{dm(w)}{\rho(w)^2}\right)^{1/2}\nu(\C)^{1/2}\\
&\overset{(*)}{\simeq} L^{-\frac{1}{2}}e^{-\phi_L(z)}\left(\iC|K_L(z,w)|^2e^{-2\phi_L(w)}\frac{dm(w)}{\rho_L(w)^2}\right)^{1/2}\simeq L^{-\frac{1}{2}},
\end{align*}
where we have used local flatness \eqref{rhoLideal} for the estimate $(*)$, so \eqref{norm0} holds. (In fact to prove \eqref{norm0} it suffices to use the
estimate \eqref{rhoL}.) Similarly
\[
\iC|\Xi_L(z,w)|^2d\nu(w)\simeq L^{-1}.
\]
By a computation almost identical to that in the proof of Theorem~\ref{variance} we also have
\begin{align*}
\iint_{\C^2}|\Xi_L(z,w)|^{2}\Theta(z)\Theta(w)d\nu(z)d\nu(w)&\simeq\iC(\Delta\psi(z))^2\rho_L(z)^2dm(z)\\
&\simeq L^{-1}\iC(\Delta\psi(z))^2\rho(z)^2dm(z)
\end{align*}
which verifies \eqref{normratio}. (In both of these estimates we use \eqref{rhoLideal} since the estimate \eqref{rhoL} is not enough, it is here that our
local flatness assumption is important.)
\end{proof}
\section{Large deviations}
In this section we prove Theorem~\ref{largedevsmooth} and Corollary~\ref{largedev}. We borrow many of the ideas used here from \cite{ST3} and \cite{SZZ},
but some modifications are necessary to deal with the fact that $\phi$ is non-radial and we are in a non-compact setting. The key ingredient in the proof
of Theorem~\ref{largedevsmooth} is the following lemma.
\begin{lemma}\label{keyupper}
For any disc $D=D^r(z_0)$ and any $\delta>0$ there exists $c>0$ depending only on $\delta$, $D$ and $\mu$ such that
\[
\int_{D}\left|\log|f_L(z)|-\phi_L(z)\right|dm(z)\leq\delta L
\]
outside an exceptional set of probability at most $e^{-cL^2}$, for $L$ sufficiently large.
\end{lemma}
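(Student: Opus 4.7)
Set $u_L := \log|f_L|-\phi_L = \log|g_L|$, where $g_L := f_L e^{-\phi_L}$; Proposition~\ref{covkerest}(b) shows $g_L$ is a centred complex Gaussian field with $\E|g_L(z)|^2\simeq 1$ uniformly in $z$, so pointwise $u_L$ is $O(1)$ in magnitude and the task is to quantify this on average with an $e^{-cL^2}$ exceptional probability. My plan is to split $u_L = u_L^+ - u_L^-$ and control $\int_D u_L^\pm \,dm$ separately, each by $\delta L/2$ off a small event.

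\textbf{Upper bound.} Start from the frame expansion $f_L=\sum_\lambda a_\lambda k_\lambda$. The off-diagonal decay $|k_\lambda(z)|e^{-\phi_L(z)}\lesssim e^{-cd_L^\epsilon(z,\lambda)}$ from Proposition~\ref{covkerest}(a), combined with the summability in Lemma~\ref{exp epsilon}, reduces $\sup_{z\in D}|g_L(z)|$ to a maximum over polynomially-many independent Gaussians $|a_\lambda|$ (those with $d_L(z_0,\lambda)\leq (\log L)^{1/\epsilon}$) plus a Cauchy--Schwarz tail that decays polynomially in $L$. A standard union bound of Gaussian tails then gives $\sup_D|g_L|\leq e^{\delta L}$ off an event of probability much smaller than $e^{-cL^2}$, and integrating yields $\int_D u_L^+\,dm \leq \delta L\,|D|/2$.

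\textbf{Lower bound (discretisation).} Pick a $\rho_L$-separated net $\{z_i\}_{i=1}^N\subset D$ with $N\simeq L\mu(D)\asymp L$ and associated Voronoi-type cells $Q_i$ of area $\simeq \rho_L(z_i)^2$ partitioning $D$; the essential feature is that $N$ is of order $L$. A local Jensen's formula on each cell, together with Lemma~\ref{bernstein}(b) to bound the oscillation of $u_L$ away from zeros, yields
\[
\int_D u_L\,dm \;\geq\; \sum_i |Q_i|\,u_L(z_i)\;-\;C|D|\;-\;C\,n_L(D),
\]
where the bound $n_L(D)\leq C'L$ holds off an event of probability $\leq e^{-cL^2}$ by applying Jensen's formula on a slightly enlarged disc and using the upper bound just proved. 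The correlations $|\E[g_L(z_i)\overline{g_L(z_j)}]|\lesssim e^{-cd_L^\epsilon(z_i,z_j)}$ from Proposition~\ref{covkerest}(a) are summably small on the $\rho_L$-separated net, so a Gaussian coupling replaces $(g_L(z_i))_i$ by $N$ iid standard complex Gaussians $(Z_i)_i$ at negligible large-deviation cost. Since $\log|Z_i|^2$ is iid with lower tail $\Pro[\log|Z_1|^2<-t]\simeq e^{-t}$ and Cram\'er rate function $I(t)\simeq t$ for large $t$, one obtains
\[
\Pro\!\Big[\sum_i|Q_i|\,u_L(z_i)<-\tfrac{\delta}{4}L\Big]\;\leq\;\exp\!\bigl(-N\cdot I(\delta L/(2|D|))\bigr)\;\lesssim\;e^{-c_\delta LN}\;\leq\;e^{-cL^2},
\]
using $N\asymp L$. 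Combining these estimates yields $\int_D u_L^-\,dm\leq \delta L/2$ off an event of probability $\leq e^{-cL^2}$, which together with the upper bound completes the proof.

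\textbf{Main obstacle.} The hardest step is the Gaussian coupling: turning the off-diagonal decay of Proposition~\ref{covkerest}(a) into a quantitative comparison with an iid Gaussian vector that preserves the Cram\'er rate, while accounting for the fact that the variances $\E|g_L(z_i)|^2$ are only comparable to (not exactly equal to) $1$, forcing a uniform version of Cram\'er across a narrow band of distributions. A secondary difficulty is ensuring that the discretisation error and the logarithmic contributions near the zeros of $f_L$ inside each cell $Q_i$ are handled on the \emph{same} good event as the upper bound, which rests on combining Lemma~\ref{bernstein} with a local Jensen analysis of possible zero clusters within cells.
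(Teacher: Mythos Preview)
Your approach is workable but takes a genuinely different route from the paper. The paper does not discretise the integral and apply Cram\'er. Instead it first proves (Lemma~\ref{maxlogphi}) that $\max_{\overline D}(\log|f_L|-\phi_L)\in[-\delta L,\delta L]$ off an $e^{-cL^2}$-event; for the lower half this uses only that the event $\{|\hat f_L(s_j)|\le e^{-\delta L}\text{ for all }j\}$ on a well-separated net of size $N\simeq L$ has probability at most $(CNe^{-2\delta L})^N$, which is a simple product bound and needs no Cram\'er or coupling. A single point $\zeta$ with $u_L(\zeta)\ge -L\mu(D)$ is then converted into an integral bound via the area Poisson--Green formula of Theorem~\ref{Pascuas}: positivity of the Green term discards the zero-counting contribution, the Poisson kernel is $\simeq 1$, and one gets $\int_D u_L^-\lesssim\int_D u_L^+ + r^2\rho(z_0)^2\mu(D)L$ (Lemma~\ref{keydisc}). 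Finally one covers $D$ by subdiscs of $\mu$-measure $\delta$ (whose radii satisfy $r_j\lesssim\delta^\gamma$ by Lemma~\ref{rho comparison}) and sums.

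What each approach buys: the paper completely sidesteps your self-identified main obstacle, since it only needs the \emph{maximum} to be not too small, and for that the crude comparison $\|\zeta\|_\infty\le\sqrt{2N}\|\xi\|_\infty$ via $\|\Sigma^{-1/2}\|\le\sqrt2$ suffices. Your route is more direct but heavier: the coupling for the \emph{sum} can be completed by a density comparison $p_\Sigma\le C^N p_{2I}$, whose $e^{O(N)}=e^{O(L)}$ prefactor is absorbed by the Cram\'er rate $e^{-c\delta LN}$, but this is more machinery than the problem requires. Two smaller remarks on your discretisation: the clean inequality is $\int_{Q_i}u_L\ge|Q_i|u_L(z_i)-C|Q_i|$, obtained from the submean property of $\log|f_L|$ on a disc centred at $z_i$ together with Lemma~\ref{Green} applied to $\phi_L$; Lemma~\ref{bernstein}(b) controls $\nabla(|f_L|e^{-\phi_L})$, not $\nabla u_L$, and does not handle the logarithmic singularities, so it is not the right tool here, and your $n_L(D)$ error term is unnecessary.
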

We begin with the following lemma.
\begin{lemma}\label{maxlogphi}
Given a disc $D=D^r(z_0)$ and $\delta>0$ there exists $c>0$ depending only on the doubling constant such that
\[
\left|\max_{z\in \overline{D}}\big(\log|f_L(z)|-\phi_L(z)\big)\right|\leq\delta L
\]
outside an exceptional set of probability at most $e^{-c\delta\mu(D)L^2}$, for $L$ sufficiently large.
\end{lemma}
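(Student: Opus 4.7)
The plan is to treat the two tails $\max_{\overline D}(\log|f_L| - \phi_L) > \delta L$ and $\max_{\overline D}(\log|f_L| - \phi_L) < -\delta L$ separately, and in both cases exploit Proposition~\ref{covkerest}(b): it gives $K_L(z,z)^{1/2}\simeq e^{\phi_L(z)}$, so the normalised field $\hat f_L(z)=f_L(z)/K_L(z,z)^{1/2}$ satisfies $|f_L(z)|e^{-\phi_L(z)}\simeq|\hat f_L(z)|$ with $\hat f_L(z)$ a standard complex Gaussian at each fixed $z$.

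The upper tail is the easy half. I would apply Lemma~\ref{bernstein}(a) to get
\[
\sup_{z\in\overline D}|f_L(z)|^2e^{-2\phi_L(z)}\lesssim X:=\int_{D'}|f_L(\zeta)|^2e^{-2\phi_L(\zeta)}\frac{dm(\zeta)}{\rho_L(\zeta)^2},
\]
where $D'\supset\overline D$ is a small enlargement. Expanding $f_L=\sum_\lambda a_\lambda k_\lambda$ shows that $X=\sum_{\lambda,\lambda'}a_\lambda\overline{a_{\lambda'}}M_{\lambda\lambda'}$ is a nonnegative Gaussian quadratic form with $\E X\simeq L\mu(D')\simeq L\mu(D)$ and operator norm $\|M\|_{\mathrm{op}}\lesssim 1$ (it is dominated by the frame Gram matrix, which is uniformly bounded in $L$ by the remark following Proposition~\ref{covkerest}). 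A standard Laplace-transform bound then gives $\Pro(X>e^{2\delta L})\leq\exp(-c\,e^{2\delta L})$, which for $L$ large is vastly smaller than the target $e^{-c\delta\mu(D)L^2}$.

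The substantive half is the lower tail. I would pick a maximal set $\{z_1,\dots,z_N\}\subset D$ that is $\rho_L$-separated with separation constant $R$; the doubling property yields $N\simeq L\mu(D)/R^2$. The vector $(\hat f_L(z_1),\dots,\hat f_L(z_N))$ is a centred complex Gaussian with covariance matrix $\Sigma$, $\Sigma_{jk}=K_L(z_j,z_k)/(K_L(z_j,z_j)K_L(z_k,z_k))^{1/2}$, whose off-diagonal entries satisfy $|\Sigma_{jk}|\lesssim e^{-c\,d_L^\epsilon(z_j,z_k)}$ by Proposition~\ref{covkerest}(a)--(b). A geometric-series estimate in the spirit of Lemma~\ref{exp epsilon}(b) shows that if $R$ is chosen large enough (depending only on the doubling constant), then $\sum_{k\neq j}|\Sigma_{jk}|\leq 1/2$ for every $j$. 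By Gershgorin every eigenvalue of $\Sigma$ then lies in $[1/2,3/2]$, and in particular $\det\Sigma\geq 2^{-N}$.

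Upper-bounding the density of this Gaussian vector pointwise by its maximum yields the small-ball estimate
\[
\Pro\!\left(\max_j|\hat f_L(z_j)|\leq\varepsilon\right)\leq\frac{\varepsilon^{2N}}{\det\Sigma}\leq(2\varepsilon^2)^N.
\]
Choosing $\varepsilon=c'e^{-\delta L}$ and taking $L$ large enough to absorb the constants, the right-hand side is at most $e^{-\delta LN}\leq e^{-c\delta\mu(D)L^2}$. Outside this event some $z_j$ satisfies $|\hat f_L(z_j)|\geq c'e^{-\delta L}$, hence $\max_{\overline D}(\log|f_L|-\phi_L)\geq-\delta L$ after absorbing an $O(1)$ constant by slightly shrinking $\delta$. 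The main technical obstacle is the determinant lower bound on $\Sigma$; once $R$ is fixed large enough that Gershgorin keeps $\Sigma$ uniformly non-degenerate, the rest of the argument is mechanical.
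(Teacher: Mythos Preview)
Your lower-tail argument is essentially the paper's: both place a $\rho_L$-separated net in $D$ with large separation $R$, use the off-diagonal decay from Proposition~\ref{covkerest}(a)--(b) together with a Lemma~\ref{exp epsilon}(b)-type sum to force the normalised covariance $\Sigma=I+A$ to be a small perturbation of the identity, and then read off a small-ball bound. The paper factors $\Sigma=BB^*$, transforms to iid Gaussians $\zeta=B^{-1}\xi$, and uses $\|\zeta\|_\infty\le\sqrt{2N}\|\xi\|_\infty$; you bound the density directly by $(\det\Sigma)^{-1}$ via Gershgorin. These are interchangeable. (A small quibble: $N\simeq L\mu(D)/R^2$ is the locally-flat heuristic; in the general doubling setting one only has $N\simeq c_R\,L\mu(D)$ with $c_R$ depending on $R$ and the doubling constant, but this is harmless since $R$ is fixed.)

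Your upper-tail argument, however, is a genuinely different route. The paper truncates the frame expansion at $\tilde\Lambda_L=\Lambda_L\cap D^{2r}(z_0)$, controls the far tail $\sum_{\lambda\notin\tilde\Lambda_L}$ on the high-probability event $\mathcal A=\{|a_\lambda|\le L|\lambda-z_0|/\rho_L(z_0)\}$ using the pointwise kernel decay of Proposition~\ref{covkerest}(a), and then bounds the near part by Cauchy--Schwarz. You instead pass through Lemma~\ref{bernstein}(a) to replace the supremum by the integral $X=\int_{D'}|f_L|^2e^{-2\phi_L}\,dm/\rho_L^2$, observe that $X$ is a nonnegative Gaussian quadratic form with trace $\simeq L\mu(D)$ and operator norm $\lesssim 1$ (by the synthesis bound for the frame, uniform in $L$), and apply a Laplace-transform tail bound. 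This is cleaner and, notably, uses only the on-diagonal estimate Proposition~\ref{covkerest}(b) and the frame bound, not the pointwise off-diagonal decay (a). In particular your argument would equally well handle the basis GAF $g_L$ and yield $\Pro[\max_{\overline D}(\log|g_L|-\phi_L)\ge\delta L]\le e^{-cL^2}$ for every $\delta>0$, which is stronger than what the paper obtains for $g_L$ in Lemma~\ref{basismaxlog} (where the threshold is only $CL$ for sufficiently large $C$). The paper's approach, on the other hand, is what later adapts to the hole-probability lower bounds in Theorem~\ref{hole}, where one really needs to split near and far contributions of the frame.
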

\begin{proof}
Define $\hat{f}_L(z)=\frac{f_L(z)}{K_L(z,z)^{1/2}}$. We will show that
\[
\Pro\left[\left|\max_{z\in \overline{D}}\log|\hat{f}_L(z)|\right|\geq\delta L\right]\leq e^{-c\delta\mu(D)L^2}
\]
for $L$ sufficiently large, which will imply the claimed result by Proposition~\ref{covkerest}(b). We divide the proof in two parts.

1. We first show that
\[
\Pro\left[\max_{z\in \overline{D}}|\hat{f}_L(z)|\leq e^{-\delta L}\right]\leq e^{-c\delta\mu(D)L^2}.
\]
For each $L$ define $S_L$ to be a $\rho_L$-separated sequence with the constant
\[
R=\inf\{d_L(s,t)\colon s\neq t\text{ and }s,t\in S_L\}
\]
to be chosen (large but uniform in $L$). Moreover we assume that
\[
\sup_{z\in\C}d_L(z,S_L)<\infty,
\]
uniformly in $L$ once more. Trivially
\[
\Pro\left[\max_{z\in \overline{D}}|\hat{f}_L(z)|\leq e^{-\delta L}\right]\leq\Pro\left[|\hat{f}_L(s)|\leq e^{-\delta L}\text{ for all }
s\in\overline{D}\cap S_L\right]
\]
and we now estimate the probability of this event. We write
\[
\overline{D}\cap S_L=\{s_1,\ldots,s_N\}.
\]
Note that for $R_1$ sufficiently small
\[
L\mu(D^{2r}(z_0))\leq\sum_{j=1}^NL\mu(D_L^{R_1}(s_j))\lesssim N
\]
while for $R_2$ large enough
\[
L\mu(D^{r}(z_0))\geq\sum_{j=1}^NL\mu(D_L^{R_2}(s_j))\gtrsim N
\]
so that $N\simeq L\mu(D)$. Consider the vector
\[
\xi=\begin{pmatrix} \hat{f}_L(s_1)\\
\vdots\\
\hat{f}_L(s_N)
\end{pmatrix}
\]
which is a mean-zero $N$-dimensional complex normal with covariance matrix $\sigma$ given by
\[
\sigma_{mn}=\frac{K_L(s_m,s_n)}{K_L(s_m,s_m)^{1/2}K_L(s_n,s_n)^{1/2}}.
\]
Note that $\sigma_{nn}=1$ and $|\sigma_{mn}|\lesssim e^{-d_L^\epsilon(s_n,s_m)}$ so that if the sequence $S_L$ is chosen to be sufficiently separated then
the components of the vector $\xi$ will be `almost independent'. We write $\sigma=I+A$ and note that
\begin{align*}
\max_n\left|\sum_{m\neq n}\sigma_{mn}\right|\lesssim\max_n\sum_{m\neq n}e^{-d_L^\epsilon(s_n,s_m)}&\lesssim\max_n\int_{\C\setminus B_L(s_n,R)}e^{-d_L^\epsilon(s_n,w)}\frac{dm(w)}{\rho_L(w)^2}\\
&\lesssim\int_{R^{\epsilon'}}^\infty x^\alpha e^{-x}dx
\end{align*}
for some $\alpha,\epsilon'>0$ by an argument identical to that given in the proof of Lemma~\ref{exp epsilon}. Thus by choosing $R$ sufficiently large we
have $\|A\|_\infty\leq\frac{1}{2}$ and so for any $v\in\C^N$
\[
\|\sigma v\|_\infty\geq\frac{1}{2}\|v\|_\infty.
\]
Thus the eigenvalues of $\sigma$ are bounded below by $\frac1 2$ and so if $\sigma=BB^*$ then
\[
\|B^{-1}\|_2\leq\sqrt{2}.
\]
Now the components of the vector $\zeta=B^{-1}\xi$ are iid $\mathcal{N}_\C(0,1)$ random variables, which we denote $\zeta_j$, and moreover
\[
\|\zeta\|_\infty\leq\|B^{-1}\xi\|_2\leq\sqrt{2}\|\xi\|_2\leq\sqrt{2N}\|\xi\|_\infty.
\]
This means that
\begin{align*}
\Pro\left[|\hat{f}_L(s)|\leq e^{-\delta L}\text{ for all }s\in \overline{D}\cap S_L\right]&\leq\Pro\left[|\zeta_j|\leq\sqrt{2N}e^{-\delta L}\text{ for all
}1\leq
j\leq N\right]\\
&=\left(1-\exp(-2Ne^{-2\delta L})\right)^N\leq e^{-c\delta\mu(D)L^2}
\end{align*}
for $L$ sufficiently large, where c depends only on the doubling constant (and $\sup_{z\in\C}d_L(z,S_L)$), as claimed.

2. For the second part of the proof we must estimate
\[
\Pro[\max_{z\in\overline{D}}|\hat{f}_L(z)|\geq e^{\delta L}]
\]
and so we define the event
\[
\mathcal E =\{\max_{z\in\overline{D}}|\hat{f}_L(z)|\geq e^{\delta L}\}.
\]
We write $\tilde{\Lambda}_L=\Lambda_L\cap D^{2r}(z_0)$ and $\tilde{f}_L=\sum_{\lambda\in\tilde{\Lambda}_L}a_\lambda k_\lambda (z)$, and note that
$\#\tilde{\Lambda}_L\simeq L\mu(D)$ as in the first part of the proof. Consider the event
\[
\mathcal A =\{|a_\lambda|\leq L\frac{|\lambda-z_0|}{\rho_L(z_0)}\text{ for }\lambda\in\Lambda_L\setminus\tilde{\Lambda}_L\}.
\]
If the event $\mathcal A$ occurs and $z\in\overline{D}$ then, since $d_L(\lambda,z)\geq C_r d_L(\lambda,z_0)$ for some $C_r>0$, we have by
Proposition~\ref{covkerest}(a)
\begin{align*}
\left|\frac{f_L(z)-\tilde{f}_L(z)}{K_L(z,z)^{1/2}}\right|&\lesssim\sum_{\lambda\in\Lambda_L\setminus\tilde{\Lambda}_L}|a_\lambda|e^{-d_L^\epsilon(\lambda,z)}\\
&\leq \frac{L}{\rho_L(z_0)}\sum_{\lambda\in\Lambda_L\setminus\tilde{\Lambda}_L}|\lambda-z_0|e^{-C_r^\epsilon d_L^\epsilon(\lambda,z)}\\
&\lesssim\frac{L}{\rho_L(z_0)}\int_{\C\setminus D^r(z_0)}|\zeta-z_0|e^{-C_r^\epsilon d_L^\epsilon(\zeta,z)}\frac{dm(\zeta)}{\rho_L(\zeta)^2}\lesssim L
\end{align*}
where the final estimate comes from an argument similar to that used in the proof of Lemma~\ref{exp epsilon} and the implicit constant depends on $r$.
Hence the event $\mathcal A\cap\mathcal E$ implies that
\[
\max_{z\in\overline{D}}\left|\frac{\tilde{f}_L(z)}{K_L(z,z)^{1/2}}\right|\geq e^{\delta L}-C'_r L\geq e^{\frac{\delta L}{2}}
\]
for $L$ sufficiently large, where $C'_r$ is another positive constant. Now a simple application of the Cauchy-Schwartz inequality shows that
\[
|\tilde{f}_L(z)|\leq\left(\sum_{\lambda\in\tilde{\Lambda}_L}|a_\lambda|^2\right)^{1/2}\left(\sum_{\lambda\in\tilde{\Lambda}_L}|k_\lambda(z)|^2\right)^{1/2}\leq\left(\sum_{\lambda\in\tilde{\Lambda}_L}|a_\lambda|^2\right)^{1/2}K_L(z,z)^{1/2}
\]
and so
\begin{align*}
\Pro[\mathcal A\cap\mathcal E]&\leq\Pro\left[\max_{z\in\overline{D}}\left|\frac{\tilde{f}_L(z)}{K_L(z,z)^{1/2}}\right|\geq e^{\frac{\delta L}{2}}\right]\\
&\leq\Pro\left[\sum_{\lambda\in\tilde{\Lambda}_L}|a_\lambda|^2\geq e^{\delta L}\right]\\
&\leq\Pro\left[|a_\lambda|^2\geq \frac{e^{\delta L}}{\#\tilde{\Lambda}_L}\text{ for all }\lambda\in\tilde{\Lambda}_L\right] =\left(\exp-\frac{e^{\delta
L}}{\#\tilde{\Lambda}_L}\right)^{\#\tilde{\Lambda}_L}=e^{-e^{\delta L}}.
\end{align*}
We finally estimate the probability of the event $\mathcal A$; using \eqref{rhoL} and \eqref{rho is poly} we see that
\begin{align}\label{tailprob}
\log\Pro[\mathcal A]&=\log\prod_{\lil_L\setminus\tilde{\Lambda}_L}\left(1-\exp\left(-L^2\frac{|\lambda-z_0|^2}{\rho_L^2(z_0)}\right)\right)\notag\\
&\simeq-\sum_{\lil_L\setminus\tilde{\Lambda}_L}\exp\left(-L^2\frac{|\lambda-z_0|^2}{\rho_L^2(z_0)}\right)\notag\\
&\gtrsim-\int_{\C\setminus D}\exp\left(-L^2\frac{|\zeta-z_0|^2}{\rho_L^2(z_0)}\right)\frac{dm(\zeta)}{\rho_L(\zeta)^2}\notag\\
&\gtrsim-L^{2/\gamma}\int_{\C\setminus D}\exp\left(-CL^{2+2/\gamma}\frac{|\zeta-z_0|^2}{\rho^2(z_0)}\right)\frac{dm(\zeta)}{\rho(\zeta)^2}\notag\\
&\gtrsim -C_0 L^{2/\gamma} e^{-C_1 L^{2+2/\gamma}}
\end{align}
where $C_0$ and $C_1$ depend on $r$ and the doubling constant, and the final estimate uses an argument similar to that given in the proof of
Lemma~\ref{exp epsilon}. We finally compute that
\[
\Pro[\mathcal E]\leq\Pro[\mathcal E\cap\mathcal A]+\Pro[\mathcal A^{c}]\leq e^{-e^{\delta L}}+(1-\exp\{-C_0 L^{2/\gamma} e^{-C_1 L^{2+2/\gamma}}\})\leq
e^{-cL^2}
\]
for $L$ sufficiently large and for any positive $c$.
\end{proof}
\begin{lemma}\label{keydisc}
Given a disc $D=D^r(z_0)$ there exist $c,C>0$ depending only on the doubling constant such that
\[
\int_{D}\left|\log|f_L(z)|-\phi_L(z)\right|dm(z)\leq C r^2\rho(z_0)^2\mu(D) L
\]
outside of an exceptional set of probability at most $e^{-c\mu(D)L^2}$, for $L$ sufficiently large.
\end{lemma}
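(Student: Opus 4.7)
The plan is to decompose $\int_D|v|\,dm = 2\int_D v^+\,dm - \int_D v\,dm$ where $v := \log|f_L|-\phi_L$, and to bound the two pieces separately outside a common low-probability event. The positive part is handled pointwise by Lemma~\ref{maxlogphi}, while the lower bound on $\int_D v\,dm$ comes from a Riesz decomposition centered at a ``good'' point produced by the same lemma.

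First, apply Lemma~\ref{maxlogphi} to an enlarged disc $D' := D^{Cr}(z_0)$ (with $C$ a large absolute constant) with a small parameter $\delta_0$ depending only on the doubling constant. Doubling gives $\mu(D')\simeq\mu(D)$, so the exceptional event $\mathcal E$ has probability at most $e^{-c\mu(D)L^2}$; on $\mathcal E^{c}$ we have simultaneously (a) the pointwise upper bound $\log|f_L(z)|\leq\phi_L(z)+\delta_0 L$ on $\overline{D'}$, and (b) some $z^*\in\overline{D^{2r}(z_0)}$ with $\log|f_L(z^*)|\geq\phi_L(z^*)-\delta_0 L$. From (a), $\int_D v^+\,dm\leq \delta_0 L\,m(D)\lesssim L r^2\rho(z_0)^2\mu(D)$ in the regime $\mu(D)\gtrsim 1$, which is the only meaningful one.

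For the lower bound on $\int_D v\,dm$ outside $\mathcal E$, introduce the auxiliary disc $\tilde D := D(z^*,3r\rho(z_0))$: then $D\subset\tilde D\subset D'$ for $C$ large enough, and $\mu(\tilde D)\simeq\mu(D)$. The Riesz representation of $\log|f_L|$ and $\phi_L$ as subharmonic functions on $\tilde D$ combines into
\[
v(z) = (\tilde h-\tilde h')(z) - 2\pi\int_{\tilde D}\tilde G_{\tilde D}(z,w)\,dn_L(w) + L\int_{\tilde D}\tilde G_{\tilde D}(z,w)\,d\mu(w),
\]
with $\tilde G_{\tilde D}\geq 0$ the negative of the Green's function of $\tilde D$ and $\tilde h,\tilde h'$ the Poisson extensions from $\partial\tilde D$. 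Integrating this over $D$, using $\int_D\tilde G_{\tilde D}(z,w)\,dm(z)\lesssim r^2\rho(z_0)^2$ uniformly for $w\in\tilde D$, and keeping the non-negative $L\mu$-integral, reduces the task to bounding $n_L(\tilde D)$ from above and $\int_D(\tilde h-\tilde h')\,dm$ from below. For $n_L(\tilde D)$, Jensen's formula applied to $f_L$ at the point $z^*$ on a disc $D(z^*,R)\supset\tilde D$ (with $R$ a suitable multiple of $r\rho(z_0)$) combines (a) on $\partial D(z^*,R)$, the lower bound (b) on $\log|f_L(z^*)|$, and Lemma~\ref{Green} (which controls the oscillation of $\phi_L$ over $D'$ by $CL\mu(D)$) to give $n_L(\tilde D)\lesssim L\mu(D)$.

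The harmonic function $\tilde h-\tilde h'$ satisfies, by the analogous Jensen expansions at its center $z^*$,
\[
(\tilde h-\tilde h')(z^*) = v(z^*) + \int_0^{3r\rho(z_0)}\frac{n^*(t)-L\mu(D(z^*,t))}{t}\,dt \gtrsim -L\mu(D),
\]
using (b), Lemma~\ref{Green}, and the non-negativity of $n^*(t)$. Since $D$ lies in the inner half of $\tilde D$, the Poisson kernels $P(z,\zeta)$ and $P(z^*,\zeta)$ are two-sidedly comparable for $z\in D$ and $\zeta\in\partial\tilde D$; the boundary upper bound $v\leq\delta_0 L$ on $\partial\tilde D$ (from (a)) controls the positive part appearing in the Poisson representation, allowing the lower bound at $z^*$ to transfer to $(\tilde h-\tilde h')(z)\gtrsim -L\mu(D)$ for every $z\in D$, whence $\int_D(\tilde h-\tilde h')\,dm\gtrsim -Lr^2\rho(z_0)^2\mu(D)$. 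The principal obstacle throughout is the pointwise unboundedness of $\log|f_L|$ from below at its zeros, which precludes any direct pointwise bound on $v$; this is circumvented by centering the Riesz decomposition at the ``typical'' point $z^*$ from Lemma~\ref{maxlogphi} so that the clean lower bound $v(z^*)\geq -\delta_0 L$ enters the mean-value and Jensen identities and ultimately yields the averaged bound on $\int_D v\,dm$.
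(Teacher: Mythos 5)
Your argument is correct in outline but takes a genuinely different route from the paper's. The paper applies the area-form representation of Theorem~\ref{Pascuas} on $D$ itself, evaluated at the good point supplied by Lemma~\ref{maxlogphi}: because $\widetilde P$ is an \emph{area} kernel comparable to $1$ when the evaluation point lies in $D^{r/2}(z_0)$, and the zero-counting term enters with a sign that can simply be dropped ($\widetilde G\geq0$), one gets directly $v(\zeta)\leq\int_D\widetilde P\,v\,\frac{dm}{r^2\rho(z_0)^2}+CL\mu(D)$, which bounds $\int_D\log^-(|f_L|e^{-\phi_L})$ by $\int_D\log^+(|f_L|e^{-\phi_L})+L\mu(D)$; a second application of Lemma~\ref{maxlogphi} finishes. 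In particular the paper never needs boundary integrals of $\log|f_L|$ nor any bound on the number of zeroes. You instead use the classical Riesz decomposition on an auxiliary disc $\tilde D$ centred at the good point, which forces two extra steps: an estimate $n_L(\tilde D)\lesssim L\mu(D)$ via Jensen, and a transfer of the lower bound on the harmonic part from the centre to all of $D$ via comparability of Poisson kernels (Harnack). Both are sound --- in particular your lower bound for the harmonic part at $z^*$ uses only the interior identity together with Lemma~\ref{Green}, so no lower bound on boundary values of $\log|f_L|$ is ever required --- and your route is arguably more elementary in that it avoids Theorem~\ref{Pascuas}, at the cost of Jensen plus Harnack.

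Three adjustable slips should be repaired. First, applying Lemma~\ref{maxlogphi} only to $D'=D^{Cr}(z_0)$ produces a good point somewhere in $\overline{D'}$, not in $\overline{D^{2r}(z_0)}$; apply the lemma additionally to $D^{2r}(z_0)$ (union bound) or enlarge the later radii. Second, with $z^*$ possibly at distance $2r\rho(z_0)$ from $z_0$, the disc $\tilde D=D(z^*,3r\rho(z_0))$ contains $D$ only up to tangency, so $D$ is \emph{not} in its inner half and the Poisson-kernel comparability fails as stated; take $\tilde D=D(z^*,Mr\rho(z_0))$ with $M$ a larger absolute constant (still inside $D'$, and $\mu(\tilde D)\simeq\mu(D)$ by doubling). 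Third, dismissing the case $\mu(D)\lesssim1$ is not legitimate: Lemma~\ref{keydisc} is invoked in Lemma~\ref{keyupper} precisely on discs of small measure $\mu(D)=\delta$, and with a fixed $\delta_0$ your bound $\int_Dv^+\,dm\leq\delta_0L\,m(D)$ is not $\lesssim\mu(D)L\,r^2\rho(z_0)^2$. The remedy is to run your argument with $\delta_0\simeq\mu(D)$, exactly as the paper does when it takes $\delta=\mu(D)$ in Lemma~\ref{maxlogphi}; everything else you wrote goes through verbatim, with exceptional probability $e^{-c\mu(D)^2L^2}$ rather than $e^{-c\mu(D)L^2}$, the same harmless looseness already present in the paper's own proof.
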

We will use the following result to prove this lemma.
\begin{thm}[{\cite[Chap. 1 Lemma 7]{Pas} or \cite[Theorem 1]{AC}}]\label{Pascuas}
If $u$ is a subharmonic function on $\overline{\mathbb{D}}$ then, for all $\zeta\in\mathbb{D}$,
\[
u(\zeta)=\int_{\mathbb{D}}\widetilde{P}(\zeta,z)u(z)dm(z)-\int_{\mathbb{D}}\widetilde{G}(\zeta,z)\Delta u(z)
\]
where
\[
\widetilde{P}(\zeta,z)=\frac1\pi\frac{(1-|\zeta|^2)^2}{|1-\overline{z}\zeta|^4}
\]
and
\[
\widetilde{G}(\zeta,z)=\frac1{4\pi}\left(\log\Big|\frac{1-\overline{\zeta}z}{\zeta-z}\Big|^2-\Big(1-\Big|\frac{\zeta-z}{1-\overline{\zeta}z}\Big|^2\Big)\right)
\]
\end{thm}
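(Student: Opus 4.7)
The plan is to derive the formula as an instance of Green's second identity applied to $u$ and $\widetilde G(\zeta,\cdot)$, the key point being that $\widetilde G$ is a modified Green function whose distributional Laplacian equals $-\delta_\zeta+\widetilde P(\zeta,\cdot)$ and which, together with its outward normal derivative, vanishes on $\partial\mathbb D$. Writing $\phi_\zeta(z)=(\zeta-z)/(1-\bar\zeta z)$ for the Möbius involution of the disc, I would first recognise the three pieces of $\widetilde G$ as the classical Green function $G(\zeta,z)=(2\pi)^{-1}\log|1/\phi_\zeta(z)|$, the constant $-1/(4\pi)$, and the extra term $(4\pi)^{-1}|\phi_\zeta(z)|^2$. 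This decomposition, together with the fact that $|\phi_\zeta(z)|=1$ on $\partial\mathbb D$, already explains why the stated kernel is the ``right'' correction of the Green function to obtain an area representation.

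Verifying the two properties of $\widetilde G$ is the core computation. Since $\phi_\zeta$ is holomorphic in $z$ with $\phi_\zeta'(z)=(|\zeta|^2-1)/(1-\bar\zeta z)^2$, I get $\Delta_z|\phi_\zeta(z)|^2=4|\phi_\zeta'(z)|^2=4\pi\widetilde P(\zeta,z)$; combined with $\Delta_zG(\zeta,z)=-\delta_\zeta$ this yields $\Delta_z\widetilde G(\zeta,z)=-\delta_\zeta+\widetilde P(\zeta,z)$. For the boundary behaviour, the algebraic identity
\[
|1-\bar\zeta z|^2-|\zeta-z|^2=(1-|\zeta|^2)(1-|z|^2)
\]
gives $1-|\phi_\zeta(z)|^2=(1-|\zeta|^2)(1-|z|^2)/|1-\bar\zeta z|^2$, which shows that each term of $\widetilde G$ is zero for $|z|=1$. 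Differentiating the same expression radially at $r=1$ produces $\partial_n(1-|\phi_\zeta|^2)|_{\partial\mathbb D}=-2(1-|\zeta|^2)/|e^{i\theta}-\zeta|^2=-4\pi P(\zeta,e^{i\theta})$, and since $\partial_nG(\zeta,\cdot)|_{\partial\mathbb D}=-P(\zeta,\cdot)$, the two contributions cancel exactly to give $\partial_n\widetilde G|_{\partial\mathbb D}=0$.

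With these two properties in hand, I apply Green's second identity to smooth $u$ on $\overline{\mathbb D}$:
\[
\int_{\mathbb D}\bigl(u(z)\,\Delta_z\widetilde G(\zeta,z)-\widetilde G(\zeta,z)\,\Delta u(z)\bigr)\,dm(z)=\int_{\partial\mathbb D}\bigl(u\,\partial_n\widetilde G-\widetilde G\,\partial_nu\bigr)\,d\sigma.
\]
The right-hand side vanishes, and substituting $\Delta_z\widetilde G=-\delta_\zeta+\widetilde P$ on the left gives the stated identity for smooth $u$. To pass to a general subharmonic $u$ (assumed subharmonic on a neighbourhood of $\overline{\mathbb D}$), I would approximate by the smooth subharmonic mollifications $u_\varepsilon=u*\chi_\varepsilon$, apply the identity to each $u_\varepsilon$, and send $\varepsilon\to 0$. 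The main obstacle lies in this limit passage: the $\widetilde P$-term is easy because $u_\varepsilon\downarrow u$ and $\widetilde P$ is bounded on $\mathbb D$, but the $\widetilde G$-term requires that $\Delta u_\varepsilon$ converges weakly as a measure to $\Delta u$ against a kernel with a logarithmic singularity at $\zeta$ and a delicate behaviour near $\partial\mathbb D$; the argument is rescued by the observation that $\widetilde G(\zeta,\cdot)\ge 0$ on $\mathbb D$ (the function $t\mapsto-\log t-1+t$ being non-negative on $(0,1]$), so that monotone convergence combined with the matching logarithmic singularity of $\widetilde G$ and $G$ at $\zeta$ transfers the formula to the measure-valued limit.
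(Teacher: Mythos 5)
You should first note that the paper itself does not prove this statement: it is quoted verbatim from Pascuas' thesis and Ahern--\v{C}u\v{c}kovi\'{c}, so there is no internal proof to compare with, and your argument has to stand on its own. On its own terms, the core of your proof is correct and is the expected derivation. Writing $\phi_\zeta(z)=(\zeta-z)/(1-\bar\zeta z)$, your identities all check out: $\phi_\zeta'(z)=(|\zeta|^2-1)/(1-\bar\zeta z)^2$, hence $\Delta_z|\phi_\zeta|^2=4|\phi_\zeta'|^2=4\pi\widetilde P(\zeta,\cdot)$; $|1-\bar\zeta z|^2-|\zeta-z|^2=(1-|\zeta|^2)(1-|z|^2)$ gives the vanishing of $\widetilde G(\zeta,\cdot)$ on $\partial\D$; and $\partial_n G(\zeta,\cdot)|_{\partial\D}=-P(\zeta,\cdot)$ together with $\partial_n|\phi_\zeta|^2|_{\partial\D}=2(1-|\zeta|^2)/|e^{i\theta}-\zeta|^2$ gives $\partial_n\widetilde G|_{\partial\D}=0$, so that $\Delta_z\widetilde G=-\delta_\zeta+\widetilde P$ and Green's second identity (with the routine excision of a small disc about $\zeta$ to justify the $\delta_\zeta$ term) yield the formula for smooth $u$. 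A quick sanity check, $u\equiv1$ and $\int_\D\widetilde P(\zeta,z)\,dm(z)=1$, confirms the normalisations.

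The one soft spot is the passage from smooth to general subharmonic $u$. ``Monotone convergence'' is not the operative tool there: the measures $\Delta u_\varepsilon\,dm$ are not monotone in $\varepsilon$, and nonnegativity plus lower semicontinuity of $\widetilde G(\zeta,\cdot)\chi_{\D}$ only give the inequality $\int_\D\widetilde G\,d\Delta u\le\liminf_\varepsilon\int_\D\widetilde G\,\Delta u_\varepsilon\,dm$. (The boundary is in fact harmless, contrary to what you suggest: since $\widetilde G(\zeta,\cdot)$ vanishes on $\partial\D$, its extension by zero is continuous off $\zeta$, and your observation that $\partial_n\widetilde G=0$ even makes it $C^1$ across the circle; the only genuine issue is the logarithmic singularity at the interior point $\zeta$.) To close the gap, truncate: $\min(\widetilde G\chi_\D,M)$ is bounded and continuous, so weak convergence $\Delta u_\varepsilon\to\Delta u$ handles that part, and the excess is controlled uniformly in $\varepsilon$ by Jensen's formula, $\frac{1}{2\pi}\int_{D(\zeta,\delta)}\log\frac{\delta}{|z-\zeta|}\,\Delta u_\varepsilon(z)\,dm(z)=\frac{1}{2\pi}\int_0^{2\pi}u_\varepsilon(\zeta+\delta e^{i\theta})\,d\theta-u_\varepsilon(\zeta)$, whose right-hand side is small, uniformly in small $\varepsilon$, as $\delta\to0$ whenever $u(\zeta)>-\infty$ (and if $u(\zeta)=-\infty$ both sides of the stated identity are $-\infty$, since $\Delta u$ then puts infinite $\widetilde G$-mass near $\zeta$ while $\int_\D\widetilde P\,u\,dm$ is finite). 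Alternatively, and more cheaply, you can avoid mollification altogether: apply the Riesz decomposition of $u$ on a slightly larger disc, verify the identity directly for harmonic functions and for the kernels $z\mapsto\log|z-w|$, and integrate in $w$ by Fubini--Tonelli, all terms having a fixed sign or being integrable.
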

\begin{proof}[Proof of Lemma~\ref{keydisc}]
Applying Lemma~\ref{maxlogphi} we see that outside of an exceptional set, we may find $\zeta\in D^{r/2}(z_0)$ such that
\[
-L\mu(D)\leq\log|f_L(\zeta)|-\phi_L(\zeta).
\]
Making the appropriate change of variables in Theorem~\ref{Pascuas} and applying the resulting decomposition to the subharmonic functions $\log|f_L|$ and
$\phi_L$ on $D$ we see that
\begin{align*}
\log|f_L(\zeta)|-\phi_L(\zeta)&=\int_{D}\widetilde{P}\left(\frac{\zeta-z_0}{r\rho(z_0)},\frac{z-z_0}{r\rho(z_0)}\right)(\log|f_L(z)|-\phi_L(z))\frac{dm(z)}{r^2\rho(z_0)^2}\\
&\qquad-\int_{D}\widetilde{G}\left(\frac{\zeta-z_0}{r\rho(z_0)},\frac{z-z_0}{r\rho(z_0)}\right)(2\pi dn_L(z)-\Delta\phi_L(z))\\
&\leq\int_{D}\widetilde{P}\left(\frac{\zeta-z_0}{r\rho(z_0)},\frac{z-z_0}{r\rho(z_0)}\right)(\log|f_L(z)|-\phi_L(z))\frac{dm(z)}{r^2\rho(z_0)^2}\\
&\qquad+\int_{D}\widetilde{G}\left(\frac{\zeta-z_0}{r\rho(z_0)},\frac{z-z_0}{r\rho(z_0)}\right)\Delta\phi_L(z)
\end{align*}
since $\widetilde{G}$ is always positive. Now, since $\zeta\in D^{r/2}(z_0)$, we have by Lemma~\ref{Green}
\begin{align*}
\int_{D}\widetilde{G}\left(\frac{\zeta-z_0}{r\rho(z_0)},\frac{z-z_0}{r\rho(z_0)}\right)\Delta\phi_L(z)&\leq\int_{D^{r/2}(\zeta)}\widetilde{G}\left(\frac{\zeta-z_0}{r\rho(z_0)},\frac{z-z_0}{r\rho(z_0)}\right)\Delta\phi_L(z)\\
&\qquad+\int_{D\setminus D^{r/2}(\zeta)}\widetilde{G}\left(\frac{\zeta-z_0}{r\rho(z_0)},\frac{z-z_0}{r\rho(z_0)}\right)\Delta\phi_L(z)\\
&\lesssim L\int_{D^{r/2}(\zeta)}\log\left(\frac{\frac32r}{|\zeta-z|}\right)d\mu(z)+L\mu(D)\\
&\lesssim L\mu(D)
\end{align*}
and so
\[
0\leq\int_{D}\widetilde{P}\left(\frac{\zeta-z_0}{r\rho(z_0)},\frac{w-z_0}{r\rho(z_0)}\right)(\log|f_L(w)|-\phi_L(w))\frac{dm(w)}{r^2\rho(z_0)^2}+CL\mu(D)
\]
for some positive $C$ depending only on the doubling constant. Noting that $\widetilde{P}$ is also positive and satisfies
\[
\widetilde{P}\left(\frac{\zeta-z_0}{r\rho(z_0)},\frac{w-z_0}{r\rho(z_0)}\right)\simeq1
\]
for $w\in D$ and $\zeta\in D^{r/2}(z_0)$ we see that
\[
\int_{D}\log^{-}(|f_L(w)|e^{-\phi_L(w)})\frac{dm(w)}{r^2\rho(z_0)^2}\lesssim\int_{D}\log^{+}(|f_L(w)|e^{-\phi_L(w)})\frac{dm(w)}{r^2\rho(z_0)^2}+L\mu(D)
\]
and so
\[
\int_{D}|\log|f_L(w)|-\phi_L(w)|\frac{dm(w)}{r^2\rho(z_0)^2}\lesssim\int_{D}\log^{+}(|f_L(w)|e^{-\phi_L(w)})\frac{dm(w)}{r^2\rho(z_0)^2}+L\mu(D).
\]
Applying Lemma~\ref{maxlogphi} once more we see that outside of another exceptional set
\[
\int_{D}\log^{+}(|f_L(w)|e^{-\phi_L(w)})\frac{dm(w)}{r^2\rho(z_0)^2}\lesssim L\mu(D)
\]
which completes the proof.
\end{proof}
We are now ready to prove Lemma~\ref{keyupper}
\begin{proof}[Proof of Lemma~\ref{keyupper}]
Given $\delta>0$ we may cover $D$ with discs $(D^{r_j}(z_j))_{j=1}^N$ such that $\mu(D^{r_j}(z_j))=\delta$ and $z_j\in U$. The Vitali covering lemma
implies that we may assume that $N\lesssim \mu(D)/\delta$. Now, applying Lemma~\ref{keydisc} we see that outside of an exceptional set
\[
\int_{U}\left|\log|f_L(z)|-\phi_L(z)\right|dm(z)\leq\delta L\sum_{j=1}^N r_j^2\rho(z_j)^2.
\]
We finally note that $\rho(z_j)\simeq\rho(z_0)$ and that Lemma~\ref{Christ} implies that
\[
r_j\lesssim\delta^{\gamma}
\]
for all $j$. Thus
\[
\int_{U}\left|\log|f_L(z)|-\phi_L(z)\right|dm(z)\lesssim\delta LN\delta^{\gamma}\lesssim L\delta^{2\gamma}.
\]
Appropriately changing the value of $\delta$ completes the proof.
\end{proof}
\begin{proof}[Proof of Theorem~\ref{largedevsmooth}]
We have already noted that the proof of (a) is identical to the proof of Theorem~\ref{almostsurebasis} (a). It remains to show the large deviations
estimate (b). We first note that
\begin{align*}
\Big|n(\psi,L)-\frac{1}{2\pi}\int\psi d\mu\Big|&=\frac{1}{2\pi L}\left|\iC\Delta\psi(z)(\log|f_L(z)|-\phi_L(z))dm(z)\right|\\
&\leq\frac{1}{2\pi L}\max_{z\in\C}|\Delta\psi(z)|\int_{\supp\psi}|\log|f_L|-\phi_L|dm
\end{align*}
and so applying Lemma~\ref{keyupper} with $\delta'=\delta|\int\psi d\mu|/\|\Delta\psi\|_\infty$ we see that
\[
\left|n(\psi,L)-\frac{1}{2\pi}\int\psi d\mu\right|\leq\delta \left|\frac{1}{2\pi}\int\psi d\mu\right|
\]
outside an exceptional set of probability at most $e^{-cL^2}$, as claimed.
\end{proof}

\begin{proof}[Proof of Corollary~\ref{largedev}]
Let $\delta>0$ and choose smooth, compactly supported $\psi_1$ and $\psi_2$ satisfying
\[
0\leq\psi_1\leq\chi_U\leq\psi_2\leq1,
\]
\[
\iC\psi_1d\mu\geq\mu(U)(1-\delta)
\]
and
\[
\iC\psi_2d\mu\leq\mu(U)(1+\delta).
\]
(a) Applying Theorem~\ref{largedevsmooth} (a) we see that, for $L$ sufficiently large,
\begin{align*}
\E\left[\frac1Ln_L(U)\right]-\frac{1}{2\pi}\mu(U)&\leq\E\left[\frac1L\int\psi_2dn_L\right]-\frac{1}{2\pi}\mu(U)\\
&\leq\frac{1}{2\pi}\int\psi d\mu+\frac{C}{L}\iC|\Delta\psi_2(z)|dm(z)-\frac{1}{2\pi}\mu(U)\\
&\leq\frac{\delta}{2\pi}\mu(U)+\frac{C}{L}\iC|\Delta\psi_2(z)|dm(z).
\end{align*}
Similarly
\[
\E\left[\frac1Ln_L(U)\right]-\frac{1}{2\pi}\mu(U)\geq-\frac{\delta}{2\pi}\mu(U)-\frac{C}{L}\iC|\Delta\psi_2(z)|dm(z).
\]
Choosing first $\delta$ small and then $L$ large (depending on $\delta$) completes the proof of (a).

\noindent (b) Outside an exceptional set of probability $e^{-cL^2}$ we have, by Theorem~\ref{largedevsmooth} (b)
\[
\frac1Ln(\psi_2,L)\leq(1+\delta)\frac1{2\pi}\iC\psi_2d\mu.
\]
We see that
\[
\frac1Ln_L(U)\leq\frac1Ln(\psi_2,L)\leq(1+\delta)\frac1{2\pi}\iC\psi_2d\mu\leq(1+\delta)^2\frac{\mu(U)}{2\pi}
\]
whence
\[
\frac{\frac1Ln_L(U)}{\frac{\mu(U)}{2\pi}}-1\lesssim\delta.
\]
Similarly
\[
\frac{\frac1Ln_L(U)}{\frac{\mu(U)}{2\pi}}-1\gtrsim-\delta.
\]
outside another exceptional set of probability $e^{-cL^2}$, which after appropriately changing the value of $\delta$ completes the proof.
\end{proof}
\section{Proof of Theorem~\ref{basishole}}
We will use some of the ideas from the proof of Theorem~\ref{largedev} here. We begin with a lemma that is very similar to Lemma~\ref{maxlogphi}. It is
clear that if we could prove an exact analogue of Lemma~\ref{maxlogphi} then we could prove a large deviations theorem, since it is only in the proof of
this lemma that we use the decay estimates for the frame elements. Unfortunately we are unable to prove such a result, but the following result will be
enough to prove a hole theorem. Recall that we write $D=D^r(z_0)$.
\begin{lemma}\label{basismaxlog}
Given $z_0\in\C$ and $\delta,r>0$ there exists $c>0$ depending only on the doubling constant such that
\[
\Pro\left[\max_{z\in\overline{D}}\big(\log|g_L(z)|-\phi_L(z)\big)\leq-\delta L\right]\leq e^{-c\delta\mu(D)L^2}
\]
for $L$ sufficiently large. Moreover there exists $C'>0$ depending on $\phi$ and $r$ such that for all $z_0\in\C$ and $C>C'$
\[
\Pro\left[\max_{z\in\overline{D}}\big(\log|g_L(z)|-\phi_L(z)\big)\geq CL\right]\leq e^{-cL^2}
\]
for sufficiently large $L$.
\end{lemma}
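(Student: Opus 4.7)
The first estimate is a direct adaptation of Part~1 of the proof of Lemma~\ref{maxlogphi}, since that argument relied only on the size and off-diagonal decay of the covariance kernel and on a Gaussian computation at points of a sufficiently separated sequence. The kernel $\K_L$ satisfies exactly the same bounds as $K_L$ by Proposition~\ref{kernest}, so I would fix a $\rho_L$-separated set $S_L$ with separation constant $R$ large and $\sup_z d_L(z,S_L)<\infty$, enumerate $S_L\cap\overline{D}=\{s_1,\dots,s_N\}$ (so $N\simeq L\mu(D)$), form the vector $\xi=(\hat g_L(s_j))_{j=1}^{N}$ where $\hat g_L=g_L/\K_L(\cdot,\cdot)^{1/2}$, and observe that its covariance $\sigma=I+A$ has $|A_{mn}|\lesssim e^{-d_L^\epsilon(s_m,s_n)}$, so $\|A\|_\infty\le 1/2$ once $R$ is large. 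Writing $\sigma=BB^{*}$ with $\|B^{-1}\|\le\sqrt{2}$ and using $\|\zeta\|_\infty\le\sqrt{2N}\,\|\xi\|_\infty$ reduces the event in question to the event that $N$ i.i.d.\ standard complex Gaussians are each of modulus at most $\sqrt{2N}\,e^{-\delta L}$, whose probability is $(1-\exp(-2Ne^{-2\delta L}))^{N}\le e^{-c\delta\mu(D)L^{2}}$.

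The upper bound is the less routine half: Part~2 of Lemma~\ref{maxlogphi} used the pointwise off-diagonal decay of the frame elements $k_\lambda$, which has no analogue for the basis $(e_n^L)_n$. I would sidestep this by passing from the pointwise problem to an $L^{2}$ problem via Bernstein's inequality (Lemma~\ref{bernstein}(a)) at scale $\rho_L$: for every $z\in\overline{D}$ and a small fixed $r_0$,
\[
|g_L(z)|^{2}e^{-2\phi_L(z)}\lesssim\int_{D_L^{r_0}(z)}|g_L(\zeta)|^{2}e^{-2\phi_L(\zeta)}\frac{dm(\zeta)}{\rho_L(\zeta)^{2}}\lesssim X,
\]
where $X=\int_{D^{2r}(z_0)}|g_L|^{2}e^{-2\phi_L}\,dm/\rho_L^{2}$ (since $D_L^{r_0}(z)\subset D^{2r}(z_0)$ for $z\in\overline{D}$ and $L$ large). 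Consequently $\max_{z\in\overline{D}}(\log|g_L(z)|-\phi_L(z))\ge CL$ forces $X\ge c_1 e^{2CL}$ for a constant $c_1$ depending on $\phi$ and $r$.

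Expanding $g_L=\sum_n a_n e_n^{L}$ writes $X=\sum_{n,m}M_{nm}a_n\overline{a_m}$, with $M_{nm}=\int_{D^{2r}(z_0)}e_n^{L}\overline{e_m^{L}}e^{-2\phi_L}\,dm/\rho_L^{2}$. Two properties follow from orthonormality of $(e_n^L)_n$: $M\le I$ as an operator on $\ell^{2}$ (integration over a subset is dominated by the $\FL$-norm), and $\operatorname{tr}M=\int_{D^{2r}(z_0)}\K_L(\zeta,\zeta)e^{-2\phi_L(\zeta)}\,dm/\rho_L^{2}\simeq L\mu(D)$ by Proposition~\ref{kernest}(b). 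Diagonalising gives $X=\sum_k\lambda_k|b_k|^{2}$ with $\lambda_k\in[0,1]$, $\sum_k\lambda_k\simeq L\mu(D)$ and $|b_k|^{2}$ i.i.d.\ $\operatorname{Exp}(1)$. A Chernoff bound at $s=1/2$, using $-\log(1-\lambda/2)\le\lambda$ on $[0,1]$, yields
\[
\Pro[X\ge t]\le e^{-t/2}\prod_k\frac{1}{1-\lambda_k/2}\le\exp\bigl(-t/2+c_0 L\mu(D)\bigr).
\]
Setting $t=c_1 e^{2CL}$ produces a bound far smaller than $e^{-cL^{2}}$ once $L$ is large, provided $C$ exceeds a threshold $C'$ absorbing $c_1$ and the Bernstein constant (both of which depend only on $\phi$ and $r$). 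The main obstacle is precisely the absence of pointwise control on $e_n^L$; the operator inequality $M\le I$ coming directly from orthonormality is an adequate substitute because we are content with the weaker bound $e^{-cL^{2}}$ on the tail, rather than the sharper $e^{-c\delta\mu(D)L^{2}}$ achievable in the lower bound.
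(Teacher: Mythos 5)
Your handling of the first estimate is exactly the paper's: the paper simply observes that Part~1 of the proof of Lemma~\ref{maxlogphi} uses only the bounds of Proposition~\ref{kernest} (normalisation of the diagonal and exponential off-diagonal decay of the covariance), so it transfers verbatim to $\K_L$, which is what you do. For the second estimate your route is genuinely different and also correct. The paper invokes the ready-made tail bound for the maximum of a GAF on a disc (\cite[Lemma 2.4.4]{HKPV}, translated and rescaled): bounding $\sigma_{2r}^2=\max_{\overline{D^{2r}(z_0)}}\K_L(z,z)\lesssim e^{2C_2L}$ and $\min_{\overline D}\phi=C_1$, it gets $\Pro[\max_{\overline D}(\log|g_L|-\phi_L)\geq CL]\leq 2\exp\{-c'e^{2(C+C_1-C_2)L}\}$, which beats $e^{-cL^2}$ precisely when $C>C_2-C_1$; this oscillation of $\phi$ over the doubled disc is the origin of the threshold $C'$ in the statement. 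You instead localise with Bernstein's inequality (Lemma~\ref{bernstein}(a)) to reduce the sup to the weighted $L^2$ mass $X$ on $D^{2r}(z_0)$, and then control $X$ as a Gaussian quadratic form: the restriction matrix $M$ satisfies $0\le M\le I$ and $\operatorname{tr}M\simeq L\mu(D)$, and the exponential Chebyshev bound at $s=1/2$ gives $\Pro[X\ge t]\le\exp(-t/2+\operatorname{tr}M)$, which with $t\simeq e^{2CL}$ is far below $e^{-cL^2}$. The comparison: the paper's argument is shorter (one citation plus a sup of the kernel) but, as written, needs $C$ to exceed the oscillation of $\phi$ on $D^{2r}(z_0)$; yours costs a little more machinery (the Karhunen--Lo\`eve/trace-class diagonalisation of $X$, which you should state is legitimate since $\operatorname{tr}M<\infty$, and the identification $\int_{D^{2r}(z_0)}dm/\rho_L^2\simeq L\mu(D)$, which follows from the same covering count as $N\simeq L\mu(D)$ in Lemma~\ref{maxlogphi}) but in exchange works for \emph{every} $C>0$, is insensitive to the oscillation of $\phi$, and is automatically uniform in $z_0$ because $\mu(D^{2r}(z_0))\lesssim_r 1$ by doubling. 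In particular your closing remark that a threshold $C'$ is still needed to absorb the Bernstein constant is superfluous: for any fixed $C>0$ the term $e^{2CL}$ eventually dominates those constants, so your method actually proves a slightly stronger statement than the one asserted.
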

\begin{proof}
The proof that
\[
\Pro\left[\max_{z\in\overline{D}}\big(\log|g_L(z)|-\phi_L(z)\big)\leq-\delta L\right]\leq e^{-c\delta\mu(D)L^2}
\]
is identical to the proof of the first part of Lemma~\ref{maxlogphi}, we omit the details.

To prove the second estimate we use the following result, which is simply \cite[Lemma 2.4.4]{HKPV} translated and re-scaled.
\begin{lemma}
Let $f$ be a Gaussian analytic function in a neighbourhood of the disc $D(z_0,R)$ with covariance kernel $K$. Then for $r<R/2$ we have
\[
\Pro\left[\max_{z\in\overline{D(z_0,r)}}|f(z)|>t\right]\leq 2e^{-t^2/8\sigma_{2r}^2}
\]
where $\sigma_{2r}^2=\max\{K(z,z):z\in\overline{D(z_0,2r)}\}$.
\end{lemma}
Let $C_1=\min\{\phi(z):z\in\overline{D}\}$ and $C_2=\max\{\phi(z):z\in\overline{D^{2r}(z_0)}\}$. Note that
\[
\max\{\K_L(z,z):z\in D(z_0,2r)\}\lesssim e^{2C_2L}.
\]
Hence
\begin{align*}
\Pro\left[\max_{z\in\overline{D}}\big(\log|g_L(z)|-\phi_L(z)\big)\geq CL\right]&\leq \Pro\left[\max_{z\in\overline{D}}|g_L(z)|\gtrsim e^{(C+C_1)L}\right]\\
&\leq 2\exp\{-c'e^{2(C+C_1-C_2)L}\}\leq e^{-cL^2}
\end{align*}
for any $c>0$ if $C+C_1-C_2>0$.
\end{proof}
We may now immediately infer the following lemma. All integrals over circles are understood to be with respect to normalised Lebesgue measure on the
circle.
\begin{lemma}\label{basiskey}
For any $z_0\in\C$ and any $\delta,r>0$ there exists $c>0$ depending only on $\delta$, $\Delta\phi(D)$ and the doubling constant such that
\[
\Pro\left[\int_{\partial D}\big(\log|g_L(z)|-\phi_L(z)\big)\leq-\delta L\right]\leq e^{-cL^2}
\]
for $L$ sufficiently large.
\end{lemma}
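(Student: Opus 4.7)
The plan is to use the Poisson--Jensen formula to transfer the pointwise control of Lemma~\ref{basismaxlog} into an estimate for the boundary integral. Write $u:=\log|g_L|-\phi_L$. First I would apply Lemma~\ref{basismaxlog} (lower bound part) to a concentric subdisc $D^\epsilon(z_0)\subset D$, with $\epsilon>0$ to be fixed below: by Lemma~\ref{rho comparison}, $\mu(D^\epsilon(z_0))\gtrsim(\epsilon/r)^\gamma\mu(D)>0$, so with probability at least $1-e^{-c_1L^2}$ we obtain a point $\zeta^*\in\overline{D^\epsilon(z_0)}$ with $u(\zeta^*)\ge -\eta L$ for a small parameter $\eta>0$. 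Simultaneously, by the second part of Lemma~\ref{basismaxlog}, with probability at least $1-e^{-c_2L^2}$ we have $u\le C_1L$ on $\overline{D}$, and in particular $\int_{\partial D}u_+\,d\sigma\le C_1L$.

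Next I would invoke the Poisson--Jensen formula for $\log|g_L|$ on the disc $D$ with base point $\zeta^*$, together with the Riesz representation for the subharmonic weight $\phi_L$ (whose Riesz measure is $L\mu$). Subtracting yields
\[
\int_{\partial D}P(\zeta^*,w)\,u(w)\,d\sigma(w)=u(\zeta^*)+\sum_{z_j\in D}\log\frac{1}{|B_{z_j}(\zeta^*)|}-\frac{L}{2\pi}\int_D G_D(\zeta^*,z)\,d\mu(z),
\]
where $B_{z_j}(\zeta^*)$ are the Blaschke factors for $D$ and $G_D\ge 0$ is the Green's function. The Blaschke sum is nonnegative, and by the argument of Lemma~\ref{Green} applied at $\zeta^*$ the last integral is at most $C_0L\mu(D)$; hence
\[
\int_{\partial D}P(\zeta^*,w)\,u(w)\,d\sigma\ge -\eta L-C_0L\mu(D).
\]

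The final step is to pass from this Poisson-weighted average to the unweighted boundary average. Since $\zeta^*\in D^\epsilon(z_0)$, a direct computation gives $|P(\zeta^*,w)-1|\le C_2\epsilon/r$ uniformly in $w\in\partial D$. Combining this with $\int_{\partial D}u_+\,d\sigma\le C_1L$, one extracts from the Poisson-weighted inequality an upper bound for $\int_{\partial D}u_-\,d\sigma$, and hence a lower bound on $\int_{\partial D}u\,d\sigma$ on the intersection of the two good events. The main obstacle is that the Riesz correction $C_0L\mu(D)$ does not shrink with $\epsilon$ or $\eta$, so the Poisson-to-uniform conversion only closes if $\eta$, the conversion error $O(\epsilon/r)\cdot L$, and $C_0\mu(D)$ all sum to at most $\delta$. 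This forces $\eta$ and $\epsilon$ to be chosen depending on $\delta$ and $\mu(D)$; shrinking $\epsilon$ only weakens the constant in the first event (through the doubling exponent $\gamma$), so the final probability bound is still of the form $e^{-cL^2}$ with $c=c(\delta,\mu(D),C_\mu)>0$, exactly as required.
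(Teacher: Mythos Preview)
Your argument has a genuine gap, and you have in fact identified it yourself without resolving it. With $\zeta^*$ chosen near the centre of $D$, the Green-function term
\[
\frac{L}{2\pi}\int_D G_D(\zeta^*,z)\,d\mu(z)\ \simeq\ L\,\mu(D)
\]
is of fixed order; it does not shrink when you shrink $\eta$ or $\epsilon$. Consequently the lower bound you obtain for $\int_{\partial D}u$ is of the form $-C(\mu(D))\,L$ with a constant that depends only on $\mu(D)$ and $C_\mu$, not $-\delta L$. For $\delta$ smaller than this fixed threshold the argument simply does not close, and the lemma is stated for \emph{every} $\delta>0$.

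The paper's proof addresses exactly this point by moving the base points out to the boundary. One chooses $N\simeq\delta^{-1}$ points $\zeta_j$ lying within $\simeq\delta^{1/4}r\rho(z_0)$ of $\partial D$ and spread evenly in angle. Because each $\zeta_j$ is close to the boundary, the Green integral $\int_D G(\zeta_j,z)\,d\mu(z)$ is now small, of order $\delta^\alpha$ (this is Claim~\ref{claim2}). The price is that $P(\zeta_j,\cdot)$ is far from $1$; however, averaging over the $N$ points restores $\frac1N\sum_j P(\zeta_j,\cdot)\approx 1$ up to $O(\delta^{1/2})$ (Claim~\ref{claim1}). One still needs a bound on $\int_{\partial D}|u|$ to control the resulting error term, and this is supplied by Claim~\ref{claim0}, whose proof is essentially your single-point argument (which does give the correct order $L\mu(D)$, just not $\delta L$). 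In short, your scheme proves Claim~\ref{claim0} but not the lemma; the missing idea is the near-boundary averaging that makes the Riesz correction go to zero with $\delta$.
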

\begin{proof}
It suffices to show this for small $\delta$. Put $\kappa=1-\delta^{1/4}$, $N=[2\pi\delta^{-1}]$ and define $z_j=z_0+\kappa r \rho(z_0)\exp(\frac{2\pi
ij}{N})$ and $D_j=D(z_j,\delta r\rho(z_0))$ for $j=1,\ldots,N$. Lemma~\ref{basismaxlog} implies that outside an exceptional set of probability at most
$Ne^{-c\delta\mu(D_j)L^2}\leq e^{-c'L^2}$ (where $c'$ depends on $\delta$, $\mu(D^r(z_0))$ and the doubling constant) there exist
$\zeta_j\in\overline{D_j}$ such that
\[
\log|g_L(\zeta_j)|-\phi_L(\zeta_j)\geq-\delta L.
\]
Let $P(\zeta,z)$ and $G(\zeta,z)$ be, respectively, the Poisson kernel and the Green function for $D$ where we use the convention that the Green function
is positive. Applying the Riesz decomposition to the subharmonic functions $\log|g_L|$ and $\phi_L$ on the disc $D$ implies that
\begin{align*}
-\delta L&\leq\frac1 N\sum_{j=1}^N\big(\log|g_L(\zeta_j)|-\phi_L(\zeta_j)\big)\\
&=\int_{\partial D}\big(\log|g_L(z)|-\phi_L(z)\big)+\int_{\partial D}\Big(\frac1
N\sum_{j=1}^NP(\zeta_j,z)-1\Big)\big(\log|g_L(z)|-\phi_L(z)\big)\\
&\qquad-\int_D\frac1 N\sum_{j=1}^NG(\zeta_j,z)\big(dn_L(z)-\Delta\phi_L(z)\big)\\
&\leq\int_{\partial D}\big(\log|g_L(z)|-\phi_L(z)\big)+\int_{\partial D}\Big(\frac1
N\sum_{j=1}^NP(\zeta_j,z)-1\Big)\big(\log|g_L(z)|-\phi_L(z)\big)\\
&\qquad+\int_D\frac1 N\sum_{j=1}^NG(\zeta_j,z)\Delta\phi_L(z).
\end{align*}
\begin{claim}\label{claim0}
There exists $\widetilde{C}>0$ such that
\[
\int_{\partial D}\big|\log|g_L(z)|-\phi_L(z)\big|\leq \widetilde{C}\mu(D)L
\]
outside of an exceptional set of probability at most $e^{-cL^2}$.
\end{claim}
\begin{claim}[{\cite[Claim 2]{ST3}}]\label{claim1}
There exists $C_0>0$ such that
\[
\max_{z\in D}\left|\frac1 N\sum_{j=1}^NP(\zeta_j,z)-1\right|\leq C_0\delta^{1/2}
\]
\end{claim}
\begin{claim}\label{claim2}
There exists $C_1>0$ and $0<\alpha<1/4$ depending only on the doubling constant and $\mu(D)$ such that
\[
\int_D G(\zeta_j,z)\Delta\phi_L(z)\leq C_1\delta^{\alpha}L
\]
for $\delta$ sufficiently small.
\end{claim}
Applying Claims~\ref{claim0} and \ref{claim1} we see that outside another exceptional set we have
\[
\left|\int_{\partial D}\Big(\frac1 N\sum_{j=1}^NP(\zeta_j,z)-1\Big)\big(\log|g_L(z)|-\phi_L(z)\big)\right|\lesssim\delta^{1/2}L
\]
while Claim~\ref{claim2} implies that
\[
\int_D\frac1 N\sum_{j=1}^NG(\zeta_j,z)\Delta\phi_L(z)\leq C_1\delta^{\alpha}L.
\]
Hence
\[
\int_{\partial D}\left(\log|g_L(z)|-\phi_L(z)\right)\geq-(\delta+C_0\delta^{3/2}+C_1\delta^{\alpha})L\gtrsim-\delta^{\alpha}L
\]
outside an exceptional set, and so the lemma follows.
\end{proof}
\begin{proof}[Proof of Claim~\ref{claim0}]
We use the same notation. Lemma~\ref{basismaxlog} implies that outside an exceptional set of probability at most $e^{-cL^2}$ there exists
$\zeta_0\in\overline{D^{r/2}(z_0)}$ such that
\[
\log|g_L(\zeta_j)|-\phi_L(\zeta_j)\geq-\mu(D) L.
\]
Another application of the Riesz decomposition to the subharmonic functions $\log|g_L|$ and $\phi_L$ on the disc $D$ implies that
\begin{align*}
-\mu(D)L&\leq\log|g_L(\zeta_0)|-\phi_L(\zeta_0)\\
&=\int_{\partial D}P(\zeta_0,z)\big(\log|g_L(z)|-\phi_L(z)\big)-\int_DG(\zeta_0,z)\big(2\pi dn_L(z)-\Delta\phi_L(z)\big)\\
&\leq\int_{\partial D}P(\zeta_0,z)\big(\log|g_L(z)|-\phi_L(z)\big)+L\int_DG(\zeta_0,z)\Delta\phi(z).
\end{align*}
Now since $\zeta_0\in D^{r/2}(z_0)$, we have by Lemma~\ref{Green}
\begin{align*}
\int_DG(\zeta_0,z)\Delta\phi(z)&\leq\int_{D^{r/2}(\zeta_0)}G(\zeta_0,z)\Delta\phi(z)+\int_{D\setminus
D^{r/2}(w)}G(\zeta_0,z)\Delta\phi(z)\\
&\lesssim\int_{D^{r/2}(\zeta_0)}\log\left(\frac{\frac32r}{|\zeta_-z|}\right)d\mu(z)+\mu(D)\\
&\lesssim \mu(D)
\end{align*}
and so
\[
0\leq\int_{\partial D}P(\zeta_0,z)\big(\log|g_L(z)|-\phi_L(z)\big)+CL\mu(D)
\]
for some positive $C$ depending only on the doubling constant. Thus
\[
\int_{\partial D}P(\zeta_0,z)\log^{-}(|g_L(z)|e^{-\phi_L(z)})\leq\int_{\partial D}P(\zeta_0,z)\log^{+}(|g_L(z)|e^{-\phi_L(z)})+CL\mu(D).
\]
We note that for $z\in\partial D$ and $\zeta_0\in D^{r/2}(z_0)$ we have
\[
\frac13\leq P(\zeta_0,z)\leq3
\]
and so
\[
\int_{\partial D}|\log|g_L(z)|-\phi_L(z)|\lesssim\int_{\partial D}\log^{+}(|g_L(z)|e^{-\phi_L(z)})+L\mu(D).
\]
Applying Lemma~\ref{maxlogphi} once more we see that outside of another exceptional set
\[
\int_{\partial D}\log^{+}(|g_L(z)|e^{-\phi_L(z)})\lesssim L\mu(D)
\]
which completes the proof.
\end{proof}
\begin{proof}[Proof of Claim~\ref{claim2}]
To simplify the notation we move to the unit disc. We write $\varphi(w)=\phi(z_0+r\phi(z_0)w)$ for $w\in\D$ and $w_j=(\zeta_j-z_0)/r\rho(z_0)$ and note
that $1-|w_j|\lesssim\delta^{1/4}$. We see that
\[
\int_D G(\zeta_j,z)\Delta\phi_L(z)=\frac L{2\pi}\int_\D\log\left|\frac{1-\overline{w_j}w}{w-w_j}\right|\Delta\varphi(w)
\]
and we write
\[
B_j(r)=\{w\in\D:\left|\frac{w-w_j}{1-\overline{w_j}w}\right|\leq r\}=D\Big(\frac{1-r^2}{1-r^2|w_j|^2}w_j,\frac{1-|w_j|^2}{1-r^2|w_j|^2}r\Big)
\]
for the hyperbolic discs of centre $w_j$ and radius $r$. Fix some $\beta<1/4$ and note that
\[
\int_{\D\setminus B_j(1-\delta^{\beta})}\log\left|\frac{1-\overline{w_j}w}{w-w_j}\right|\Delta\varphi(w)
\leq-\log(1-\delta^{\beta})\Delta\varphi(\D)\leq2\delta^{\beta}\mu(D).
\]
Also, using the distribution function, we see that
\begin{align*}
\int_{B_j(1-\delta^{\beta})}\log\left|\frac{1-\overline{w_j}w}{w-w_j}\right|\Delta\varphi(w)&=\int_0^\infty\Delta\varphi(B_j(1-\delta^{\beta})\cap
B_j(e^{-x}))dx\\
&=\int_0^{-\log(1-\delta^{\beta})}\Delta\varphi(B_j(1-\delta^{\beta}))dx\\
&\qquad+\int_{-\log(1-\delta^{\beta})}^\infty\Delta\varphi(B_j(e^{-x}))dx\\
&\leq2\delta^{\beta}\mu(D)+\int_{-\log(1-\delta^{\beta})}^\infty\Delta\varphi(B_j(e^{-x}))dx.
\end{align*}
Now the Euclidean radius of the disc $B_j(e^{-x})$ is given by
\[
\frac{1-|w_j|^2}{1-e^{-2x}|w_j|^2}e^{-x}\lesssim\frac{1-|w_j|}{1-e^{-x}|w_j|}\lesssim\frac{\delta^{1/4}}{\delta^{\beta}}
\]
which gets arbitrarily small, while $\rho_{\Delta\varphi}(w)\simeq\rho_{\Delta\varphi}(0)$ for all $w\in\D$. Applying Lemma~\ref{Christ} to the doubling
measure $\Delta\varphi$ we see that there exists $0<\gamma<1$ such that
\begin{align*}
\int_{-\log(1-\delta^{\beta})}^\infty\Delta\varphi(B_j(e^{-x}))dx&
\lesssim\int_{\delta^\beta}^\infty\left(\frac{1-|w_j|^2}{1-e^{-2x}|w_j|^2}e^{-x}\right)^\gamma dx\\
&\lesssim(1-|w_j|)^\gamma\int_{\delta^\beta}^\infty\frac{e^{-\gamma x}}{(1-e^{-x}|w_j|)^\gamma}dx\\
&\leq(1-|w_j|)^\gamma|w_j|^{-\gamma}\int_0^1\frac{(1-u)^{\gamma-1}}{u^\gamma}du\leq C_\gamma\delta^{\gamma/4}.
\end{align*}
where we have made the change of variables $u=1-e^{-x}|w_j|$. We therefore have
\[
\int_D G(\zeta_j,z)\Delta\phi_L(z)\lesssim(\delta^{\gamma/4}+\delta^{\beta})L
\]
and the claim follows by choosing $\alpha=\min\{\gamma/4,\beta\}$.
\end{proof}

We are now ready to prove Theorem~\ref{basishole}. Since we do not have any control on the dependence of the constants on the bounded set $U$, we assume
that $U$ is the disc $D$.
\begin{proof}[Proof of Theorem~\ref{basishole}]
Suppose that $g_L$ has no zeroes in $D$. Recall that we use $G(\zeta,z)$ to denote the Green function for $D$. Applying Jensen's formula to $g_L$ and the
Riesz decomposition to the subharmonic function $\phi_L$ on the disc $D$ we see that
\[
\log|g_L(z_0)|-\phi_L(z_0)=\int_{\partial D}\left(\log|g_L(z)|-\phi_L(z)\right)+L\int_DG(z_0,z)\Delta\phi(z)dm(z).
\]
Choosing $\delta=\int_DG(z_0,z)\Delta\phi(z)/2$ in Lemma~\ref{basiskey} shows that outside an exceptional set of probability at most $e^{-cL^2}$ we have
\[
\log|g_L(\zeta_0)|-\phi_L(\zeta_0)\geq\delta L.
\]
Now Proposition~\ref{kernest} shows that
\[
\Pro[\log|g_L(\zeta_0)|-\phi_L(\zeta_0)\geq\delta L]\leq\Pro\Big[\frac{|g_L(\zeta_0)|}{\K_L(z_0,z_0)^{1/2}}\gtrsim e^{\delta L}\Big]\leq\exp\{-Ce^{2\delta
L}\}\leq e^{-cL^2}
\]
and so
\[
\Pro[n_L(D)=0]\leq e^{-cL^2},
\]
which completes the proof.
\end{proof}
\section{Proof of Theorem~\ref{hole}}
We have previously remarked that the upper bound in Theorem~\ref{hole} is a simple consequence of Theorem~\ref{largedev}, we now prove the lower bounds.
We will do this by first finding a deterministic function $h_L$ that does not vanish in the hole and then constructing an event that ensures the GAF $f_L$
is `close' to $h_L$. Since we can always find a disc $D=D^r(z_0)$ contained in $U$, and we do not have any control on the dependence of the constants on
$U$, we will prove the theorem only in the case $U=D$. We begin by constructing the function $h_L$.

\begin{lemma}\label{nonzero}
There exists an entire function $h_L$ with the following properties:
\begin{itemize}
\item $\|h_L\|_{\FL}=1$.
\item There exists $C_0>0$ depending on $\mu(D)$ and the doubling constant such that
\[
|h_L(z)|e^{-\phi_L(z)}\geq e^{-C_0 L}
\]
for all $z\in D$.
\end{itemize}
\end{lemma}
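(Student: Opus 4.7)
The strategy is to build a holomorphic model of $e^{\phi_L}$ on a neighbourhood of $D$, then extend it to an entire function via a $\bar\partial$-correction. The natural candidate $h_L = k_{z_0}^L$ (the normalised reproducing kernel at the centre of $D$) works in the model case $\phi(z)=|z|^2/2$, but fails in general: by Proposition~\ref{kernest}(a) and Lemma~\ref{distance}, $|k_{z_0}^L(z)|e^{-\phi_L(z)}$ decays like $\exp(-d_{L}^{\epsilon}(z,z_0))$, and $d_{L}(z,z_0)$ grows polynomially in $L$ across the macroscopic disc $D$, far faster than the permitted rate $e^{-C_0 L}$.

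Apply Riesz's decomposition to $\phi_L$ on the enlarged disc $D^{2r}(z_0)$: $\phi_L = H_L + G_L$, where $H_L$ is the harmonic extension of $\phi_L|_{\partial D^{2r}(z_0)}$ and $G_L$ is the Green potential of $L\mu|_{D^{2r}(z_0)}$. By Lemma~\ref{Green} applied to $L\mu$, $0 \leq G_L(z) \leq C_1 L$ on $D$, with $C_1$ depending only on $\mu(D)$ and the doubling constant. Since $D^{2r}(z_0)$ is simply connected, choose a holomorphic $F_L$ on $D^{2r}(z_0)$ with $\re F_L = H_L$; then $|e^{F_L(z)}| = e^{\phi_L(z)-G_L(z)}$, so
\[
e^{\phi_L(z) - C_1 L} \,\leq\, |e^{F_L(z)}| \,\leq\, e^{\phi_L(z)} \qquad (z\in D).
\]

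Take a smooth cutoff $\chi$ with $\chi \equiv 1$ on $D^{3r/2}(z_0)$, $\supp\chi \subset D^{2r}(z_0)$, $|\bar\partial\chi| \lesssim \rho(z_0)^{-1}$, and set $u_L = \chi e^{F_L}$, a smooth compactly supported function with $\bar\partial u_L$ supported in the annulus $D^{2r}(z_0)\setminus D^{3r/2}(z_0)$. Solve $\bar\partial v_L = \bar\partial u_L$ using an $L^2$-H\"ormander estimate adapted to our Fock setting, in which Proposition~\ref{kernest}(c) ($\Delta\log\K_L \simeq \rho_L^{-2}$) serves as a regularisation of $\Delta\phi_L$. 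Then $\tilde h_L := u_L - v_L$ is entire, agrees with $e^{F_L} - v_L$ on $D^{3r/2}(z_0)$, and has norm controlled by
\[
\|\tilde h_L\|_{\FL}^2 \leq 2\|u_L\|^2 + 2\|v_L\|^2 \lesssim L\,\mu(D^{2r}(z_0)),
\]
where $\|\cdot\|$ denotes the $L^2(e^{-2\phi_L}\,dm/\rho_L^2)$-norm and we use $|e^{F_L}|^2 e^{-2\phi_L}=e^{-2G_L}\leq 1$ together with $\int_{D^{2r}(z_0)} dm/\rho_L^2 \simeq L\mu(D^{2r}(z_0))$.

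The main obstacle is the pointwise lower bound on $\tilde h_L$ over $D$, which reduces to showing $|v_L(z)| \leq \tfrac12 |e^{F_L(z)}|$ there. Standard $L^2$-H\"ormander combined with Lemma~\ref{bernstein}(a) yields only $|v_L(z)|e^{-\phi_L(z)} \lesssim 1$, too weak by a factor $e^{C_1 L}$. The remedy is a pointwise off-diagonal $\bar\partial$-estimate for the minimal solution,
\[
|v_L(z)|e^{-\phi_L(z)} \,\lesssim\, \int_{\supp\bar\partial u_L} |\bar\partial u_L(w)| e^{-\phi_L(w)}\, e^{-c\, d_{L}^{\epsilon}(z,w)}\, \frac{dm(w)}{\rho_L(w)^2},
\]
modelled on the derivation of the off-diagonal decay of $\K_L$ in \cite{MO}. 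Since by Lemma~\ref{distance} together with \eqref{rhoL} the support of $\bar\partial u_L$ lies at $d_{L}$-distance $\gtrsim L^{\gamma}$ from any $z\in D$, this gives $|v_L(z)|e^{-\phi_L(z)}\leq e^{-cL^{\kappa}}$ for some $\kappa>0$, well below $e^{-C_1 L}$. Hence $|\tilde h_L(z)| \geq \tfrac12 e^{\phi_L(z)-C_1 L}$ on $D$, and setting $h_L := \tilde h_L/\|\tilde h_L\|_{\FL}$ (with $\|\tilde h_L\|_{\FL} \lesssim (L\mu(D^{2r}(z_0)))^{1/2}$) produces the required unit-norm entire function with $|h_L(z)|e^{-\phi_L(z)} \geq e^{-C_0 L}$ on $D$, $C_0$ absorbing $C_1$ and the sub-polynomial normalisation factor.
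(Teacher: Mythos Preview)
Your approach via Riesz decomposition, a holomorphic antiderivative $F_L$, and $\bar\partial$-correction is genuinely different from the paper's, which instead takes a \emph{power of a small-scale reproducing kernel}: one fixes $\delta_0>0$ so small that $\rho_{\delta\mu}(z_0)$ dwarfs the radius of $D$, whence Proposition~\ref{kernest}(d) gives $|k_\delta(z)|e^{-\delta\phi(z)}\ge C$ on all of $D$ for $\delta\le\delta_0$; writing $L=N\delta$ with integer $N$ and $\delta\in[\delta_0/2,\delta_0]$, the function $h_L=k_\delta^N/\|k_\delta^N\|_{\FL}$ then satisfies $|h_L|e^{-\phi_L}=(|k_\delta|e^{-\delta\phi})^N/\|k_\delta^N\|_{\FL}\ge C^N/\|k_\delta^N\|_{\FL}\ge e^{-C_0L}$, with the norm bound coming from $|k_\delta|^{2N}e^{-2\phi_L}\le e^{-d_{\delta\mu}^\epsilon}$ and Lemma~\ref{exp epsilon}. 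This construction is short and uses only the kernel estimates already established.

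Your construction, by contrast, has a genuine gap at the step where you claim $|v_L(z)|e^{-\phi_L(z)}\le e^{-cL^\kappa}$ is ``well below $e^{-C_1L}$''. The exponent you obtain is $\kappa=\gamma\epsilon$ (the $d_L$-distance from $D$ to $\supp\bar\partial\chi$ is $\gtrsim L^\gamma$ by \eqref{rhoL}, and the kernel-type decay contributes the exponent $\epsilon$ from Proposition~\ref{kernest}(a)). But $\gamma\le1$ by \eqref{rhoL}, and for a general doubling measure the exponent $\epsilon$ in Proposition~\ref{kernest}(a) need not be $\ge1$ either (indeed \cite{Ch} shows that sub-exponential decay is in general the best possible). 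Hence $\kappa=\gamma\epsilon<1$ typically, so that for large $L$ one has $cL^\kappa\ll C_1L$ and therefore $e^{-cL^\kappa}\gg e^{-C_1L}$: your bound on $v_L$ is too weak to guarantee $|v_L(z)|<\tfrac12|e^{F_L(z)}|$ on $D$, and the lower bound on $\tilde h_L$ collapses. The difficulty is intrinsic to the method: the only off-diagonal decay available for $\K_L$ (and hence for any $\bar\partial$-solution operator built from it) across the \emph{macroscopic} disc $D$ is $e^{-cL^{\gamma\epsilon}}$, which cannot in general compete with the loss $e^{-C_1L}$ coming from the Green potential $G_L$. The paper's power-of-kernel trick sidesteps this entirely by working at the fixed scale $\delta$, where no competition between these two rates ever arises.
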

\begin{rem}
In the case $\phi(z)=|z|^2$ we may take $h_L$ to be constant. More generally, if
\[
\iC e^{-2\phi_L}\frac{dm}{\rho_L^2}\leq C^L
\]
then we can take $h_L=C^{-L}$. In general, however, it may not even be the case that
\[
\iC e^{-2\phi}\frac{dm}{\rho^2}
\]
is finite (consider $\phi(z)=(\re z)^2$).
\end{rem}
\begin{proof}
Let $\K_\delta(z,w)$ be the reproducing kernel for the space $\mathcal F_{\delta\phi}^2$ and consider the normalised reproducing kernel
\[
k_\delta(z)=\frac{\K_\delta(z,z_0)}{\K_\delta(z_0,z_0)^{1/2}}.
\]
Now since $\rho_{\delta\mu}(z_0)\rightarrow\infty$ as $\delta\rightarrow0$ Proposition~\ref{kernest} shows that there exists $\delta_0$ and $C>0$
(depending only on $r$ and the doubling constant) such that
\begin{equation}\label{kdeltalower}
|k_\delta(z)|e^{-\delta\phi(z)}\geq C
\end{equation}
for all $z\in D$ and all $\delta<\delta_0$. Given any $L$ sufficiently large we can find $\delta\in[\delta_0/2,\delta_0]$ and an integer $N$ such that
$L=N\delta$. We note that $\rho_{\delta\mu}(z)\simeq\rho_\mu(z)$ for all $\delta$ in this range (where the implicit constants depend on $\delta_0$) and so
applying Proposition~\ref{kernest} and \eqref{rhoL} gives
\begin{align*}
\iC|k_\delta(z)|^{2N}e^{-2\phi_L(z)}\frac{dm(z)}{\rho_L(z)^2}&\lesssim L^{2/\gamma}\iC(|k_\delta(z)|e^{-\delta\phi(z)})^{2N}\frac{dm(z)}{\rho_{\delta\mu}(z)^2}\\
&\lesssim L^{2/\gamma}\iC e^{-d_{\delta\phi}^\epsilon(z,z_0)}\frac{dm(z)}{\rho_{\delta\mu}(z)^2}\lesssim L^{2/\gamma}.
\end{align*}
Hence $k_\delta^N$ is an entire function in $\Fdosa_L$ and we define $h_L=k_\delta^N/\|k_\delta^N\|_{\Fdosa_L}$. We finally note that \eqref{kdeltalower}
implies that for all $z\in D$
\[
|h_L(z)|e^{-\phi_L(z)}=(|k_\delta(z)|e^{-\delta\phi(z)})^N/\|k_\delta^N\|_{\Fdosa_L}\gtrsim C^N L^{1/\gamma}\geq e^{-C_0 L}
\]
where $C_0$ depends on $\delta_0$ and the doubling constant.
\end{proof}

\begin{proof}[Proof of the upper bounds in Theorem~\ref{hole}]
(a) Recall that $(\tilde{k}_{\lambda})_{\lil_L}$ is the dual frame associated to the frame $(k_{\lambda})_{\lil_L}$. Since $h_L\in\mathcal F_{\phi_L}^2$
we may write $h_L=\sum_{\lil}\langle h_L,\tilde{k}_{\lambda}\rangle k_{\lambda}=\sum_{\lil}c_\lambda k_{\lambda}$ where we define $c_\lambda=\langle
h_L,\tilde{k}_{\lambda}\rangle$ (and we ignore the dependence on $L$ to simplify the notation). Note that, for any $z\in D$, we have
\[
|f_L(z)|e^{-\phi_L(z)}=\Big|h_L(z)+\sum_{\lil}(a_\lambda-c_\lambda)\tilde{k}_{\lambda}(z)\Big|e^{-\phi_L(z)}\geq e^{-C_0 L}
-\sum_{\lil}|a_\lambda-c_\lambda||\tilde{k}_{\lambda}(z)|e^{-\phi_L(z)}.
\]
We therefore have
\[
\Pro[n_L(D)=0]\geq\Pro[\max_{z\in D}\sum_{\lil}|a_\lambda-c_\lambda||\tilde{k}_{\lambda}(z)|e^{-\phi_L(z)}<e^{-C_0 L}]
\]
and we now estimate the probability of this event. First define
\[
\alpha=\max\{0,\frac1\delta(\frac1{\epsilon}-\gamma)\}
\]
where $\epsilon,\gamma$ and $\delta$ are the constants appearing in Proposition~\ref{kernest}, \eqref{rhoL} and Lemma~\ref{distance} respectively. Fix a
large positive constant $C_1$ to be specified, write
\[
D_L=D^{C_1L^\alpha r}(z_0)
\]
and define the event
\[
\mathcal{E}_1=\{|a_\lambda-c_\lambda|\leq L\frac{|\lambda-z_0|}{\rho_L(z_0)}:\lambda\in\Lambda_L\setminus D_L\}.
\]
If $\mathcal{E}_1$ occurs and $z\in D$ then, using an argument identical to that given in the proof of Lemma~\ref{exp epsilon}, we see that
\begin{align*}
\sum_{\lambda\in\Lambda_L\setminus
D_L}|a_\lambda-c_\lambda||k_{\lambda}(z)|e^{-\phi_L(z)}&\lesssim L\sum_{\lambda\in\Lambda_L\setminus D_L}\frac{|\lambda-z_0|}{\rho_L(z_0)}e^{-d_L^{\epsilon}(z,\lambda)}\\
&\lesssim L^{1+1/\gamma}\sum_{\lambda\in\Lambda_L\setminus D_L}\frac{|\lambda-z_0|}{\rho(z_0)}e^{-c'L^{\epsilon\gamma} d_\phi^{\epsilon}(z_0,\lambda)}\\
&\lesssim L^{1+1/\gamma}\int_{\C\setminus D_L}\frac{|\zeta-z_0|}{\rho(z_0)}e^{-c'L^{\epsilon\gamma}\left(\frac{|\zeta-z_0|}{\rho(z_0)}\right)^{\epsilon\delta}}\frac{dm(z)}{\rho_L(\zeta)^2}\\
&\lesssim L^{\beta_0}\int_{c'C_1^{\delta\epsilon}L^{\alpha'}}^{+\infty} e^{-t}t^{\beta_1}dt\\
&\leq \frac{1}{2}e^{-C_0 L}
\end{align*}
for $C_1$ sufficiently large, where $\alpha'=\max\{1,\epsilon\gamma\}$, and $\beta_0$ and $\beta_1>0$ are some exponents that depend on the doubling
constant.

We define the event
\[
\mathcal{E}_2=\{|a_\lambda-c_\lambda|\leq\frac{e^{-C_0 L}}{C_2\sqrt{\#\Lambda_L\cap D_L}}:\lambda\in\Lambda_L\cap D_L\},
\]
where $C_2$ is a positive constant to be chosen. Note that for all $z\in\C$, $\mathcal{E}_2$ implies that by choosing $C_2$ sufficiently large
\begin{align*}
\sum_{\lambda\in\Lambda_L\cap D_L}|a_\lambda-c_\lambda||k_{\lambda}(z)|e^{-\phi_L(z)}&\leq\Big(\sum_{\lambda\in\Lambda_L\cap D_L}|a_\lambda-c_\lambda|^2\Big)^{1/2} \Big(\sum_{\lambda\in\Lambda_L\cap D_L}|k_{\lambda}(z)|^2\Big)^{1/2}e^{-\phi_L(z)}\\
&\leq\frac{e^{-C_0 L}}{C_2}K_L(z,z)^{1/2}e^{-\phi_L(z)}<\frac1 2e^{-C_0 L}.
\end{align*}

Hence
\[
\Pro[n_L(D)=0]\geq\Pro[\mathcal{E}_1]\Pro[\mathcal{E}_2]
\]
Recalling the definition of the coefficients $c_\lambda$ we note that
\[
\sum_{\lil_L}|c_\lambda|^2\simeq\|h_L\|_{\Fdosa_L}^2=1
\]
and so the coefficients $c_\lambda$ are bounded. This means that
\[
\Pro\Big[|a_\lambda-c_\lambda|\leq L\frac{|\lambda-z_0|}{\rho_L(z_0)}\Big]\geq\Pro\Big[|a_\lambda|\leq L\frac{|\lambda-z_0|}{2\rho_L(z_0)}\Big]
\]
when $\lambda\in\Lambda_L\setminus D_L$ and $L$ is large. We may estimate $\Pro[\mathcal{E}_1]$ similarly to \eqref{tailprob} in the proof of
Lemma~\ref{maxlogphi}. This yields $\Pro[\mathcal{E}_1]\geq1/2$ for large $L$.

Finally since $\#\Lambda_L\cap D_L\simeq\Delta\phi_L(D_L)\lesssim L^{1+\alpha/\gamma}$ we have
\[
\Pro[\mathcal{E}_2]=\prod_{\lil_L\cap D_L}\Pro\Big[|a_\lambda-c_\lambda|\leq\frac{e^{-C_0 L}}{C_2\sqrt{\#\Lambda_L\cap D_L}}\Big]\geq\Big(C\frac{e^{-2C_0
L}}{\#\Lambda_L\cap D_L}\Big)^{\#\Lambda_L\cap D_L}\geq e^{-cL^{2+\alpha/\gamma}}
\]
for some positive constants $C$ and $c$. Considering the two possible values of $\alpha$ completes the proof of the lower bounds in Theorem~\ref{hole}.

\noindent (b) We assume that the reproducing kernel $\K_L$ satisfies the estimate \eqref{fastL2}. We will use the same notation as before. Let $C_3$ and
$C_4$ be constants to be chosen and define the following events
\[
\mathcal{A}_1=\{|a_\lambda-c_\lambda|\leq L\frac{|\lambda-z_0|}{\rho_L(z_0)}:\lambda\in\Lambda_L\setminus D^{C_3r}(z_0)\}
\]
\[
\mathcal{A}_2=\{|c_\lambda-a_\lambda|\leq\frac{e^{-C_0 L}}{C_4\sqrt{\#\Lambda_L\cap D^{C_3r}(z_0)}}:\lambda\in\Lambda_L\cap D^{C_3r}(z_0)\}.
\]
We have already seen that the event $\mathcal{A}_1$ implies that
\[
\Big|\sum_{\lambda\in\Lambda_L\setminus D_L}(a_\lambda-c_\lambda)\tilde{k}_{\lambda}(z)\Big|e^{-\phi_L(z)}\leq \frac{1}{2}e^{-C_0 L}
\]
for $z\in D$. We write $\tilde{\Lambda}_L=\Lambda_L\cap(D_L\setminus D^{C_3r}(z_0))$. Note that $\mathcal{A}_1$ and \eqref{fastL2} imply that, for $z\in
D$,
\begin{align*}
&\sum_{\lambda\in\tilde{\Lambda}_L}|a_\lambda-c_\lambda||k_{\lambda}(z)|e^{-\phi_L(z)}\\
&\qquad\leq\Big(\sum_{\lambda\in\tilde{\Lambda}_L}|a_\lambda-c_\lambda|^2\Big)^{1/2}
\Big(\sum_{\lambda\in\tilde{\Lambda}_L}|k_{\lambda}(z)|^2\Big)^{1/2}e^{-\phi_L(z)}\\
&\qquad\lesssim L^{1+\alpha+1/\gamma}\sqrt{\#\tilde{\Lambda}_L}\left(\int_{\C\setminus D^{C_3r}(z_0))}|\K_L(z,\zeta)|e^{-2\phi_L(\zeta)}\frac{dm(z)}{\rho_L(z)^2}\right)^{1/2}e^{-\phi_L(z)}\\
&\qquad<\frac1 4e^{-C_0 L}
\end{align*}
for an appropriately large choice of $C_3$ and for all large $L$. By an identical computation to before we see that $\mathcal{A}_2$ implies that for all
$z\in\C$
\[
\sum_{\lambda\in\Lambda_L\cap D^{C_3r}(z_0)}|a_\lambda-c_\lambda||k_{\lambda}(z)|e^{-\phi_L(z)}<\frac1 4e^{-C_0 L}
\]
by choosing $C_4$ sufficiently large. It remains only to estimate the probabilities of the events $\mathcal{A}_1$ and $\mathcal{A}_2$, which are again
identical to the previous computation. This completes the proof.
\end{proof}

\section*{Appendix: The case $|z|^\alpha/2$}\label{radial}
We consider the space $\FL$ when $\phi(z)=|z|^\alpha/2$ and we first note that for $|z|\leq1$
\[
\rho(z)\simeq1
\]
and that
\[
\rho(z)\simeq|z|^{1-\alpha/2}
\]
otherwise. We begin by showing that the set $(\frac{(L^{1/\alpha}z)^n}{c_{\alpha n}})_{n=0}^\infty$ is an orthonormal basis for some choice of $c_{\alpha
n}\simeq\Gamma(\frac2\alpha n+1)^{1/2}$. It is clear that the functions $z^n$ are orthogonal because $\phi_L$ (and therefore $\rho_L$) are radial, and so
we need only compute the appropriate normalising constants
\[
\|z^n\|_{\FL}^2=\iC|z|^{2n}e^{-L|z|^\alpha}\frac{dm(z)}{\rho_L(z)^2}.
\]
Now it is easy to see that for $|z|\leq\rho_L(0)$
\[
\rho_L(z)\simeq\rho_L(0)\simeq L^{-1/\alpha}
\]
and that
\[
\rho_L(z)\simeq L^{-1/2}|z|^{1-\alpha/2}
\]
otherwise. Hence, using the fact that $L\rho_L(0)^{\alpha}\simeq1$, we have
\begin{align*}
\|z^n\|_{\FL}^2&=\iC|z|^{2n}e^{-L|z|^\alpha}\frac{dm(z)}{\rho_L(z)^2}\\
&\simeq L^{2/\alpha}\int_{D^{L}(0)}|z|^{2n}e^{-L|z|^\alpha}dm(z)+L\int_{\C\setminus D^{L}(0)}|z|^{2n}e^{-L|z|^\alpha}|z|^{\alpha-2}dm(z)\\
&\simeq L^{-2n/\alpha}(\int_0^{L\rho_L(0)^{\alpha}}u^{1+(2n+2)/\alpha}e^{-u}du+\int_{L\rho_L(0)^{\alpha}}^{\infty}u^{2n/\alpha}e^{-u}du)\\
&\simeq L^{-2n/\alpha}\Gamma\left(\frac2\alpha n+1\right)^{1/2}.
\end{align*}
It follows that, for some coefficients $c_{\alpha n}\simeq\Gamma(\frac2\alpha n+1)^{1/2}$, the set $(\frac{(L^{1/\alpha}z)^n}{c_{\alpha n}})_{n=0}^\infty$
is an orthonormal basis for $\FL$ and the reproducing kernel for this space is then given by
\[
\K_L(z,w)=\sum_{n=0}^\infty\frac{(L^{2/\alpha}z\overline{w})^n}{c_{\alpha n}^2}.
\]
We recall that for positive $a$ the Mittag-Leffler function
\[
E_{a,1}(\zeta)=\sum_{n=0}^\infty\frac{\zeta^n}{\Gamma(an+1)}
\]
is an entire function of order $1/a$ satisfying
\[
E_{a,1}(x)\lesssim e^{x^{1/a}}
\]
for all real positive $x$.

We now show that $\K_L$ has fast $L^2$ off-diagonal decay, that is, given $C,r>0$ there exists $R>0$ (independent of $L$) such that
\[
\sup_{z\in D^r(z_0)}e^{-L|z|^{\alpha}}\int_{\C\setminus D^{2R}(z_0)}|\K_L(z,w)|^2e^{-L|w|^{\alpha}}\frac{dm(w)}{\rho_L(w)^2}\leq e^{-CL}
\]
for all $z_0\in\C$ and $L$ sufficiently large (we have replaced $R$ by $2R$ to simplify the notation in what follows). Choosing $R$ sufficiently large we
have
\[
\int_{\C\setminus D^{2R}(z_0)}|\K_L(z,w)|^2e^{-L|w|^{\alpha}}\frac{dm(w)}{\rho_L(w)^2}\leq\int_{\C\setminus
D(0,R)}|\K_L(z,w)|^2e^{-L|w|^{\alpha}}\frac{dm(w)}{\rho_L(w)^2}
\]
and we note again that $\phi_L$ and $\rho_L$ are radial. Thus, for any positive integers $n$ and $m$,
\begin{align*}
\int_{\C\setminus D(0,R)}w^n\overline{w}^me^{-L|w|^{\alpha}}\frac{dm(w)}{\rho_L(w)^2}&\simeq\delta_{nm}\int_{\C\setminus
D(0,R)}|w|^{2n}e^{-L|w|^{\alpha}}L|w|^{\alpha-2}dm(w)\\
&=2\pi\delta_{nm}\int_{R}^{\infty}r^{2n}e^{-Lr^{\alpha}}Lr^{\alpha-1}dr\\
&=\frac{2\pi}{\alpha}\delta_{nm}L^{-2n/\alpha}\int_{LR^{\alpha}}^{\infty}u^{2n/\alpha}e^{-u}du\\
&=\frac{2\pi}{\alpha}\delta_{nm}L^{-2n/\alpha}\Gamma\left(\frac2\alpha n+1,LR^{\alpha}\right)
\end{align*}
where $\Gamma(a,z)=\int_{z}^{\infty}t^{a-1}e^{-t}dt$ denotes the incomplete Gamma function.  Now, recalling the expression for the kernel $\K_L$ we see
that
\[
|\K_L(z,w)|^2=\sum_{m,n=0}^{\infty}\frac{L^{2(m+n)/\alpha}}{c_{\alpha m}c_{\alpha n}}z^m\overline{z}^n\overline{w}^mw^n
\]
and so
\begin{align*}
\int_{\C\setminus D(0,R)}|\K_L(z,w)|^2e^{-L|w|^{\alpha}}\frac{dm(w)}{\rho_L(w)^2}&=\sum_{m,n=0}^{\infty}\frac{L^{2(m+n)/\alpha}}{c_{\alpha m}c_{\alpha
n}}z^m\overline{z}^n\int_{\C\setminus D(0,R)}w^n\overline{w}^me^{-L|w|^{\alpha}}\frac{dm(w)}{\rho_L(w)^2}\\
&\simeq\sum_{n=0}^{\infty}\frac{L^{4n/\alpha}}{\Gamma(\frac2\alpha n+1)^2}|z|^{2n}L^{-2n/\alpha}\Gamma\left(\frac2\alpha n+1,LR^{\alpha}\right)\\
&=\sum_{n=0}^{\infty}\frac{(L^{2/\alpha}|z|^{2})^n}{\Gamma(\frac2\alpha n+1)}\frac{\Gamma(\frac2\alpha n+1,LR^{\alpha})}{\Gamma(\frac2\alpha n+1)}.
\end{align*}
We split this sum in two parts. Choose $N=[\frac\alpha4LR^\alpha]$ and note that for $n\leq N$ we have, by standard estimates for the incomplete Gamma
function,
\[
\Gamma\left(\frac2\alpha n+1,LR^{\alpha}\right)\simeq(LR^{\alpha})^{2n/\alpha}e^{-LR^{\alpha}}
\]
as $R\rightarrow\infty$. Now Stirling's approximation shows that
\begin{align*}
\frac{(LR^\alpha)^{\frac2\alpha n}}{\Gamma(\frac2\alpha n+1)}&\leq\frac{(LR^\alpha)^{\frac2\alpha N}}{\Gamma(\frac2\alpha N+1)}\\
&\simeq\left(\frac4\alpha N\right)^{\frac2\alpha N}\left(\frac2\alpha N+1\right)^{1/2}\left(\frac e{\frac2\alpha N+1}\right)^{\frac2\alpha N+1}\\
&\lesssim\left(\frac{\frac4\alpha N}{\frac2\alpha N+1}\right)^{\frac2\alpha N}e^{\frac2\alpha N}\\
&\lesssim 2^{\frac2\alpha N}e^{\frac2\alpha N}\\
&=e^{LR^\alpha(1+\log2)/2}
\end{align*}
and so
\[
\frac{\Gamma(\frac2\alpha n+1,LR^{\alpha})}{\Gamma(\frac2\alpha n+1)}\lesssim e^{-cLR^{\alpha}}.
\]
It follows that
\begin{align*}
\sum_{n=0}^{N}\frac{(L^{2/\alpha}|z|^{2})^n}{\Gamma(\frac2\alpha n+1)}\frac{\Gamma(\frac2\alpha n+1,LR^{\alpha})}{\Gamma(\frac2\alpha n+1)}&\lesssim e^{-cLR^{\alpha}}\sum_{n=0}^{\infty}\frac{(L^{2/\alpha}|z|^{2})^n}{\Gamma(\frac2\alpha n+1)}\\
&=e^{-cLR^\alpha}E_{\frac2\alpha,1}(L^{2/\alpha}|z|^{2})\\
&\lesssim e^{-cLR^\alpha}e^{L|z|^\alpha}.
\end{align*}
To deal with the remaining terms we first note that
\[
\frac{\Gamma(\frac2\alpha n+1,LR^{\alpha})}{\Gamma(\frac2\alpha n+1)}\leq1
\]
for all $n$. We now choose $R$ so large that
\[
L^{2/\alpha}|z|^2<e^{-4/\alpha}L^{2/\alpha}R^2<e^{-4/\alpha}\left(\frac2\alpha N+1\right)^{2/\alpha}
\]
for $z\in D^r(z_0)$. Note that another application of Stirling's approximation yields, for any $n>N$,
\[
\Gamma\left(\frac2\alpha n+1\right)\gtrsim\Gamma\left(\frac2\alpha N+1\right)\left(\frac2\alpha N+1\right)^{2(n-N)/\alpha}.
\]
We conclude that for $z\in D^r(z_0)$ and $R$ sufficiently large we have
\begin{align*}
\sum_{n>N}\frac{(L^{2/\alpha}|z|^{2})^n}{\Gamma(\frac2\alpha n+1)}\frac{\Gamma(\frac2\alpha n+1,LR^{\alpha})}{\Gamma(\frac2\alpha
n+1)}&\lesssim\frac{(L^{2/\alpha}|z|^{2})^N}{\Gamma(\frac2\alpha N+1)}\sum_{n=0}^{\infty}\frac{(L^{2/\alpha}|z|^{2})^n}{(\frac2\alpha
N+1)^{2n/\alpha}}\\
&=\frac{(L^{2/\alpha}|z|^{2})^N}{\Gamma(\frac2\alpha N+1)}\left(1-\frac{L^{2/\alpha}|z|^{2}}{(\frac2\alpha N+1)^{2/\alpha}}\right)^{-1}\\
&\simeq\frac{(L^{2/\alpha}|z|^{2})^N}{\Gamma(\frac2\alpha N+1)}.
\end{align*}
A final appeal to Stirling's approximation yields
\begin{align*}
\frac{(L^{2/\alpha}|z|^{2})^N}{\Gamma(\frac2\alpha N+1)}&\simeq\left(L^{2/\alpha}|z|^{2}\right)^N\left(\frac2\alpha N+1\right)^{1/2}\left(\frac
e{\frac2\alpha N+1}\right)^{\frac2\alpha N+1}\\
&\simeq\left(\frac{L^{2/\alpha}|z|^{2}}{(\frac2\alpha N+1)^{2/\alpha}}\right)^N\left(\frac2\alpha N+1\right)^{-1/2}e^{2N/\alpha}\\
&\lesssim e^{-4N/\alpha}e^{2N/\alpha}\\
&=e^{-LR^\alpha/2}.
\end{align*}
Retracing our footsteps we see that we have shown that
\[
\int_{\C\setminus D^{2R}(z_0)}|\K_L(z,w)|^2e^{-L|w|^{\alpha}}\frac{dm(w)}{\rho_L(w)^2}\lesssim e^{-cLR^{\alpha}}(1+e^{L|z|^\alpha})
\]
for all $z\in D^r(z_0)$ and $R$ sufficiently large. Hence
\[
\sup_{z\in D^r(z_0)}e^{-L|z|^{\alpha}}\int_{\C\setminus D^{2R}(z_0)}|\K_L(z,w)|^2e^{-L|w|^{\alpha}}\frac{dm(w)}{\rho_L(w)^2}\leq e^{-CL}
\]
for an appropriately large $R$, as claimed.

\end{document}